\newcounter{dummy}
\newcommand\myitem[1][]{\item[#1]\refstepcounter{dummy}\def\@currentlabel{#1}}
\definecolor{LinkColor}{rgb}{0,0,0} %black
\newcommand{\slunlhd}{%
	\mathrel{\mathpalette\sl@unlhd\relax}%
}
\newcommand{\sl@unlhd}[2]{%
	\sbox\z@{$#1\lhd$}%
	\sbox\tw@{$#1\leqslant$}%
	\dimen@=\ht\tw@
	\advance\dimen@-\ht\z@
	\ifx#1\displaystyle
	\advance\dimen@ .2pt
	\else
	\ifx#1\textstyle
	\advance\dimen@ .2pt
	\fi
	\fi
	\ooalign{\raisebox{\dimen@}{$\m@th#1\lhd$}\cr$\m@th#1\leqslant$\cr}%
}
\newtheorem{innercustomthm}{Theorem}[section]
\crefname{innercustomthm}{Theorem}{Theorems}
\newtheorem{theorem}{Theorem}[section]
\newtheorem{corollary}[theorem]{Corollary}
\newtheorem{lemma}[theorem]{Lemma}
\newtheorem{proposition}[theorem]{Proposition}
\theoremstyle{definition}
\newtheorem{remark}[theorem]{Remark}
\newtheorem{example}[theorem]{Example}
\newcommand{\Irr}{\operatorname{Irr}}
\newcommand{\F}{\textup{F}}
\newcommand{\tr}{\textup{tr}}
\author[S. Chahal]{Seema Chahal}
\address{(Seema Chahal) Department of Mathematics, Indian Institute of Technology Roorkee, Roorkee (Uttarakhand)-247667, India.}
\email{\href{mailto:seema_r@ma.iitr.ac.in}{seema\_r@ma.iitr.ac.in}}
\author[S. Maheshwary]{Sugandha Maheshwary}
\address{(Sugandha Maheshwary) Department of Mathematics, Indian Institute of Technology Roorkee, Roorkee (Uttarakhand)-247667, India.}
\email{\href{mailto:msugandha@ma.iitr.ac.in}{msugandha@ma.iitr.ac.in}}
\thanks{The second author gratefully acknowledges the support by Science  \& Engineering Research Board (SERB),  DST (Department of Science and Technology), India (SRG/2023/000180).}
\keywords{finite semisimple group algebras, primitive central idempotents, group codes.}
\subjclass[2010]{16S34, 20C05, 11T71, 94B05, 94B65}
\date{}
\title{On metacyclic $p$-group codes }
\begin{document}
	\maketitle
	\begin{abstract}
		In this article, we study the metacyclic $p$-group codes arising from finite semisimple group algebras. In \cite{CM25}, we studied group codes arising from metacyclic groups with order divisible by two distinct odd primes. In the current work, we focus on metacyclic $p$-group codes, as a result of which we are also able to extend the results of \cite{CM25} for metacyclic 	groups with order divisible by any two primes, not necessarily odd or distinct. Consequently, existing results on group algebras of some important classes of groups, including dihedral and quaternion groups, have been extended. Additionally, we provide left codes for the undertaken group algebras. %The work in this article substantially contributes to the study of  $p$-group codes, basing upon metacyclic $p$-group codes.
		 Finally, we construct non-central codes using units motivated by  Bass and bicyclic units, which  are inequivalent to any abelian group codes and yield  best known parameters.
	\end{abstract}
	
	\section{Introduction}
	The theory of group codes, initiated by Berman \cite{Ber67} and MacWilliams~\cite{Wil70}, studies ideals of semisimple group algebras. Since these ideals are determined by idempotents, their explicit description plays a central role in the construction and analysis of group codes. In particular, primitive central idempotents (pcis) yield information about central codes. Group codes form a rich class of linear codes.  For instance, cyclic codes can be understood as group codes arising from cyclic groups, Reed-Solomon codes over field $\mathbb{F}_p$ are group codes of elementary abelian $p$-groups \cite{Pas88}; the binary Golay code $[24,12,8]$ can be obtained as an ideal in a group algebra over a finite field~\cite{Wol80}.  
	
	In group codes, abelian group codes are well studied and cover many classical families of linear codes. However non-abelian codes are also of interest because of their potential applications in code-based cryptography (\cite{DK15}, \cite{DK16}). Among the non-abelian group codes, metacyclic codes form an asymptotically good family of codes \cite{BMS20}. Particularly for dihedral  codes over $\mathbb{F}_q$, Dutra et al.~\cite{DFPM09} investigated the codes under certain restrictions on $q$. Under similar restrictions, Assuena and Milies~(\cite{APM17}, \cite{APM19}, \cite{Ass22}) considered split metacyclic groups of order $p_1^m p_2^n$, where $p_1$ and $p_2$ are distinct odd primes. They also proposed constructions of certain non-central codes with good parameters. Gupta and Rani~(\cite{GR22b}, \cite{GR22a}, \cite{GR23}) applied the theory of strong Shoda pairs to obtain pcis for dihedral groups and constructed corresponding codes, again under restrictive hypotheses on $q$. More recently, Vedenev~\cite{Ved25} carried out a comprehensive study of group codes from non-abelian split metacyclic group algebras.  
	
	In our earlier work~\cite{CM25}, we mainly worked with the pcis of $\mathbb{F}_qG$, where $G$ is a metacyclic group of order $p_1^m p_2^l$, with $p_1$ and $ p_2$  distinct odd primes. Unlike the previously existing work as cited above, we assumed almost no restriction on $q$ and hence extended the known results in this direction. In the current article, we further extend the results of \cite{CM25} by including the cases where $p_1$ and $p_2$ are any primes, not necessarily odd or distinct. This is done by studying the pcis in metacyclic $p$-group algebras, for any prime $p$. In particular, for metacyclic $p$-groups which  have a maximal cyclic subgroup, we also obtain the structures for their respective group algebras. Consequently, we improve several existing results on dihedral $2$-codes as well (cf.~\cite{DFPM09}, \cite{GR22b}, \cite{GR22a}). Furthermore, the results are generalised for metacyclic groups with order divisible by more than two primes and dihedral as well as Quaternion groups of arbitrary orders. 
	
	Throughout the article, we consider various kinds of groups. For each of these groups, we compute a complete set of pcis as well as left idempotents and study all the parameters for the associated group codes. We also provide an $\mathbb{F}_q$-basis for these codes. 
	The computation of pcis is based on strong Shoda pair theory and wherever possible, we provide unified treatment to the codes depending upon the type of the corresponding  strong Shoda pairs. This also facilitates us to give the explicit structure of the considered group algebras. Finally, we construct non-central codes via conjugation of idempotents with suitable units, motivated by well known Bass and bicyclic units of $\mathbb{Z}G$. Hence, we obtain non-central codes with improved parameters as compared to central codes.  
	We include illustrations through explicit construction of codes whose parameters are at par with the best known linear codes.

	\section{Notation and Preliminaries}
	Throughout the article, we use the notation which is in accordance with \cite{CM25}. For better accessibility, we restate the notation and include some fundamental results in this section. 
	
	Let $\mathbb{F}_q$ denote the field with $q$ elements and let $\mathbb{F}_qG$ be the finite semisimple group algebra of a group $G$ over $\mathbb{F}_q$, so that $q$ is relatively prime to $|G|$, the order of $G$. For $\alpha = \sum_{g \in G} \alpha_g g \in \mathbb{F}_qG$, the weight of $\alpha$ is cardinality of the set $\{ g \in G \mid \alpha_g \neq 0 \}$ and is denoted by  $wt(\alpha)$. The Hamming distance between $\alpha$ and $\beta=\sum_{g \in G} \beta_g $ in $\mathbb{F}_qG$ is $d(\alpha,\beta) = |\{ g \in G \mid \alpha_g \neq \beta_g \}|$, which satisfies $d(\alpha,\beta) = wt(\alpha - \beta)$, and hence $wt(\alpha) = d(\alpha,0)$. The weight of an ideal $I \subseteq \mathbb{F}_qG$ is defined as  $\min\{ wt(\alpha) \mid \alpha \in I, \ \alpha \neq 0 \}$. As  stated in the introduction, group codes are nothing but the ideals of $\mathbb{F}_qG$, which are determinable via their idempotents. If $e$ is a pci of $\mathbb{F}_qG$, then $\mathbb{F}_qG e$ is the corresponding central linear $[n,k,d]$ code, where  $n = |G|$, $k = \dim_{\mathbb{F}_q}(\mathbb{F}_qG e)$, the $\mathbb{F}_q$
	dimension of $\mathbb{F}_qG e$ and $d = d(\mathbb{F}_qG e),$ the weight of $\mathbb{F}_qG e$.  
	
	Denote the set of irreducible characters of $G$ over $\mathbb{F}_q$  by $\Irr(G)$.
	If $H$ and $K$ are subgroups of $G$ such that $H/K$ is cyclic, then for a generator $\gamma \in \Irr(H/K)$, the $q$-cyclotomic coset of $\gamma$ is given by $C_q(\gamma)=\{\gamma,\gamma^q,\gamma^{q^2},...,\gamma^{q^{o-1}}\},$ where $o$ is the multiplicative order of $q$ modulo $|H/K|$. Let $\mathcal{C}(H/K)$ be the set of $q$-cyclotomic cosets of $\Irr(H/K)$ containing the generators of $\Irr(H/K)$. The action $g * C = g^{-1} C g, \quad g \in N_G(H) \cap N_G(K),\; C \in \mathcal{C}(H/K),$
	defines the set $\mathcal{R}(H/K)$ of distinct orbits.  Denote the stabilizer of any element of $\mathcal{C}(H/K)$ by $\mathcal{E}_G(H/K).$ For $C = C_q(\chi) \in \mathcal{R}(H/K)$, define 
	\begin{equation}
		\epsilon_C(H,K) = \frac{1}{[H:K]}\,\widehat{K} \sum \limits_{\overline{h} \in H/K} \operatorname{tr}(\chi(\overline{h}))\,h^{-1},
	\end{equation}
	where $\widehat{K} = \frac{1}{|K|}\sum \limits_{k \in K} k$ and $\operatorname{tr} = \operatorname{tr}_{\mathbb{F}_q(\xi_{[H:K]})/\mathbb{F}_q}$ with $\xi_{[H:K]}$ a primitive $[H:K]$-th root of unity. The sum of distinct $G$-conjugates of $\epsilon_C(H,K)$ is denoted $e_C(G,H,K)$.\\
	Consider a pair $(H, K)$ of subgroups of $G$ such that $H$ is normal subgroup of $G$ and $H/K$
	is cyclic as well as a maximal abelian subgroup of  $N_G(K)/K$. Then by (\cite{OdRS04}, Corollary~3.6) and \cite{BdR07}, $(H, K)$ is a strong Shoda pair of $G$ and 
	$e_C(G,H,K)$ is a pci of $\mathbb{F}_qG.$  Further,
	\begin{equation}\label{equation 2}
		\mathbb{F}_qGe_C(G,H,K)\cong M_{[G:H]}(\mathbb{F}_{q^{o/[E:H]}}),
	\end{equation}
	where $E=\mathcal{E}_G(H/K)$ and $o$ is the multiplicative order of $q$ modulo $[H:K].$ \\
	Two strong Shoda pairs of a group are said to be inequivalent, if their corresponding pcis are distinct. We shall denote the set of all  inequivalent strong Shoda pairs of a group $G$ by $\mathcal{S}(G).$   
	Clearly, $(G, G) \in \mathcal{S}(G)$ and for a normal subgroup $K$ of $G$, the pair $(G, K) \in \mathcal{S}(G)$ if and only if $G/K$ is cyclic. The following theorem provides parameters of the codes associated with the pcis  corresponding to strong Shoda pair of type $(G,K)$.
	\begin{theorem}\label{parmameters $G=H$}
		Let $\mathbb{F}_q G$ be a finite semisimple group algebra. If $K$ is a normal subgroup of $G$ such that $G/K=\langle gK\rangle$, for some $g
		\in G$, then the code corresponding to the \linebreak pci(s) $e:=e_C(G, G, K)$,  $C \in \mathcal{R}(G/K)$, satisfy the following:
		\begin{itemize}
			\item[(i)] $\dim_{\mathbb{F	}_q}(\mathbb{F}_q G e) = o_{|G/K|}(q)$;
			\item[(ii)] The set $\mathcal{B}:= \{e, eg, \ldots, eg^{o_{|G/K|}(q)-1}\}$ is an $\mathbb{F}_q$-basis for  $\mathbb{F}_q G e$;
			\item[(iii)] $2|K| \leq d \leq \operatorname{wt}(e)$,  where $d$ denotes the minimum distance of the code and $\operatorname{wt}(e)$ denotes the weight of the idempotent $e$.
			
		\end{itemize}
		In particular, if $|G/K|=p^j$ where $j \in \mathbb{N}$ and  $p$ is an odd prime such that $o_{p^j}(q)=\phi(p^j)$,  then the minimum distance of code generated by $e$ is $2|K|$.
		
	\end{theorem}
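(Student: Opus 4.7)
The plan is to apply equation~(2), which for $H=G$ simplifies dramatically, and then analyze the resulting code through its natural identification with a cyclic code on $G/K$. For (i), substituting $H = G$ in~(2) gives $[G:H] = 1$; moreover, since $G/K$ is abelian and $K \triangleleft G$, conjugation by any element of $G$ acts trivially on $\Irr(G/K)$, so $\mathcal{E}_G(G/K) = G$ and $[E:H] = 1$. Hence $\mathbb{F}_qGe \cong \mathbb{F}_{q^o}$ with $o = o_{|G/K|}(q)$, establishing (i).

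For (ii), I would first observe that $e\widehat{K} = e$ (a consequence of $\widehat{K}^2 = \widehat{K}$), which yields $ek = e$ for every $k \in K$ and hence $eg^i$ depends only on the coset $g^iK$. The surjection $\mathbb{F}_qG \twoheadrightarrow \mathbb{F}_qG\widehat{K} \cong \mathbb{F}_q(G/K)$ followed by projection onto the simple component $\mathbb{F}_q(G/K)\bar{e}$ is precisely the isomorphism of (i); under it, $eg^i$ maps to $\bar{g}^i\bar{e}$, and $\bar{g}\bar{e}$ identifies in $\mathbb{F}_q(G/K)\bar{e} \cong \mathbb{F}_{q^o}$ with a primitive $|G/K|$-th root of unity whose minimal polynomial over $\mathbb{F}_q$ is a degree-$o$ factor of $\Phi_{|G/K|}$. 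Since this primitive root generates $\mathbb{F}_{q^o}$ over $\mathbb{F}_q$, the set $\{1, \bar{g}\bar{e}, \ldots, (\bar{g}\bar{e})^{o-1}\}$ is an $\mathbb{F}_q$-basis, which lifts to the proposed basis $\{e, eg, \ldots, eg^{o-1}\}$ of $\mathbb{F}_qGe$.

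The upper bound $d \leq \operatorname{wt}(e)$ in (iii) is immediate since $e \in \mathbb{F}_qGe$. For the lower bound, every $\alpha \in \mathbb{F}_qGe$ satisfies $\alpha\widehat{K} = \alpha$ (using $e\widehat{K} = e$), and expanding in the group-element basis forces the coefficients of $\alpha$ to be constant on right cosets of $K$. Hence the support of $\alpha$ is a union of $K$-cosets and $\operatorname{wt}(\alpha)$ is a positive multiple of $|K|$. To exclude a single-coset support, suppose $\alpha = ct\widehat{K}$ for some $c \in \mathbb{F}_q^\times$ and $t \in G$; then $\alpha = \alpha e$ together with $\widehat{K}e = e$ yields $ct\widehat{K} = cte$, so $\widehat{K} = e$. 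But $\widehat{K}$ equals the sum of all pcis of $\mathbb{F}_qG$ indexed by $q$-cyclotomic classes in $\Irr(G/K)$, and whenever $|G/K| > 1$ there is at least one such class distinct from $C$ (for instance the class of the trivial character), so $\widehat{K} \neq e$, a contradiction. Thus $d \geq 2|K|$.

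For the final claim, the hypothesis $o_{p^j}(q) = \phi(p^j)$ makes $\Phi_{p^j}$ irreducible over $\mathbb{F}_q$; via $\mathbb{F}_q(G/K) \cong \mathbb{F}_q[x]/(x^{p^j} - 1)$, the code $\mathbb{F}_q(G/K)\bar{e}$ is the cyclic code with generator polynomial $(x^{p^j} - 1)/\Phi_{p^j}(x) = x^{p^{j-1}} - 1$, which is itself a codeword of weight $2$. Lifting produces $(g^{p^{j-1}} - 1)\widehat{K} \in \mathbb{F}_qGe$; since $gK$ has order $p^j$ in $G/K$, the coset $g^{p^{j-1}}K$ is disjoint from $K$, so the support is $K \sqcup g^{p^{j-1}}K$ of weight exactly $2|K|$, which combined with (iii) forces $d = 2|K|$. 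I expect the main obstacle to be the lower bound in (iii): one must combine the $\widehat{K}$-invariance of codewords with the pci-decomposition of $\widehat{K}$ to rigorously exclude codewords supported on a single $K$-coset.
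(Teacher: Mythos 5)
Your proposal is correct and takes essentially the same route as the paper: dimension via the Wedderburn component formula with $H=G$, linear independence of $e, eg, \ldots, eg^{o-1}$ because $ge$ generates $\mathbb{F}_{q^{o}}$ over $\mathbb{F}_q$, the lower bound $2|K|$ by observing that codeword coefficients are constant on $K$-cosets and excluding single-coset support (your idempotent-decomposition contradiction $\widehat{K}=e$ is a minor variant of the paper's dimension count, and you are in fact slightly more careful in flagging the needed hypothesis $|G/K|>1$), and the final equality by exhibiting the same weight-$2|K|$ codeword $(g^{p^{j-1}}-1)\widehat{K}$. Your cyclic-code phrasing through $\mathbb{F}_q(G/K)\cong \mathbb{F}_q[x]/(x^{|G/K|}-1)$ is only a cosmetic repackaging of the paper's direct computation.
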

	\begin{proof}
		As stated in preliminaries, we have that $\mathbb{F}_qGe_C(G,H,K)\cong M_{[G:H]}(\mathbb{F}_{q^{o/[E:H]}}),$
		where $E=\mathcal{E}_G(H/K)$ and $o$ is the multiplicative order of $q$ modulo $[H:K].$  So $\mathbb{F}_qGe_C(G, G, K)\cong \mathbb{F}_{q^{o_{|G/K|}(q)}}$ and we get the desired dimension.  
		Now we verify that $\mathcal{B}$ is an $\mathbb{F}_q$-basis for  $\mathbb{F}_q G e$, where  $e:=e_C(G, G, K)$.
		By dimension consideration, it is sufficient to show that the set $\mathcal{B}$ is linearly independent. Let $|G/K|=k$.  If $\mathcal{B}$ is a linearly dependent set, then
		\begin{equation}
			\sum\limits_{\mu_g=0}^{o_k(q)-1}\beta_{\mu_g}g^{\mu_g}e=0,
		\end{equation} 
		for some non-zero coefficients $\beta_{\mu_g},$ which yields that $ge$ is a root of non-zero polynomial of degree at most $o_{k}(q)-1$. This is not possible because $\mathbb{F}_qGe$ is the smallest field containing $ge$ and has degree   $o_{k}(q)$ over $\mathbb{F}_q$. 
		For distance bound firstly, observe that $\widehat{K}e_C(G, G, K)=e_C(G, G, K)$, so that $\mathbb{F}_qGe_C(G, G, K)\subseteq\mathbb{F}_qG\widehat{K}$. 
		Any element $\alpha \in \mathbb{F}_q G e_C(G, G, K)$ can be written as $\alpha = \left( \sum \limits_{t \in T} \alpha_t t \right) \widehat{K}$, with $\alpha_t \in \mathbb{F}_q$, where $T$ denotes the transversal of $K$ in $G$. If only one coefficient $\alpha_t$ is non-zero, say $\alpha = \alpha_t t \widehat{K}$ for some $t \in T$, then $\mathbb{F}_q G e_C(G, G, K) \supseteq \mathbb{F}_q G \widehat{K}$, which implies $k=o_k(q)$,  a contradiction. Hence, at least two coefficients must be non-zero, implying that each non-zero codeword has weight at least $2|K|$.\\
		Now if $|G/K|=p^j$ such that $o_{p^j}(q)=\phi(p^j)$ then from (\ref{equation 3}), the expression for $e=\widehat{K}[1-\widehat{\langle g^{p^{j-1}}\rangle }]$ and  $(1-g^{p^{j-1}})e=(1-g^{p^{j-1}})\widehat{K}$, which implies $d \leq 2|K|$.
			\end{proof}
			The above result is proved in a general setting for an arbitrary finite group $G$.  Henceforth, we focus on codes generated by the pcis corresponding to strong Shoda pairs $(H,K)$ where $H$ is a proper subgroup of $G$.\\
		It may be noted that metacyclic groups are normally monomial and hence the algorithms given in \cite{BM14} and \cite{BM16} to compute $\mathcal{S}(G)$ are applicable for these groups.  
		Let $G$ be a metacyclic group of the form $C_{p_1^m} \rtimes C_{p_2^l}$, where $p_1$ and $p_2$ are distinct primes, and $C_{p_2^l}$ acts faithfully on $C_{p_1^m}$. Then,  $G$ can be presented as
	\begin{equation}\label{equation 3}
		G = \langle a,b \mid a^{p_1^m} = b^{p_2^l} = 1,\ b^{-1}ab = a^r \rangle,
	\end{equation} 
	where $m,l,r \in \mathbb{N}$ are such that $o_{p_1^m}(r) = p_2^l$.  
	By  \cite{JOdRV13} we have that,   \begin{equation}\label{equation 4}
		\mathcal{S}(G) = \{(G,G)\} \cup \{(G,\langle a,b^{p_2^{j_2}} \rangle) \mid j_2 = 1,\dots,l\} \cup \{(\langle a\rangle, \langle a^{p_1^{j_1}}\rangle) \mid j_1 = 1,\dots,m\}.
	\end{equation}
	The following lemma, analogous to~\cite[Lemmas~3.1 and~3.2]{CM25} shall be useful in the study of metacyclic group codes of even length, particularly for computing traces.
	\begin{lemma}\label{lemma_trace_zero 2}  
	For $i, q  \in \mathbb{N}$, where $q$ is a power of some odd prime, we have the following:
	\begin{enumerate}
		\item[(1)]  If $q=1 + 2^{i_0} c $, with $c$ odd and $i_0\geq 2$, then  $
		o_{2^i}(q) = 
		\begin{cases}
			2^{i - i_0 }, & ~\mathrm{if } ~i > i_0 \\
			1, & ~\mathrm{otherwise} \\
			
		\end{cases} ~~{\mathrm {and}}
		$
		   
		  $$	\tr(\xi_{2^i}) =\sum \limits_{j=0}^{o_{2^i}(q)-1}\xi_{2^i}^{q^j}= 0, \quad \text{if and only if } i > i_0.$$
		\item[(2)] If $q=-1 + 2^{i_0} c $, with $c$ odd and $i_0\geq 2$, then $
		o_{2^i}(q) = 
		\begin{cases}
			2^{i - i_0 }, & ~~\mathrm{if}~ i > i_0 \\
			2, & ~~\mathrm{otherwise} \\
			
		\end{cases}~~{\mathrm {and}}
		$
		
		$$	\tr(\xi_{2^i}) =\sum \limits_{j=0}^{o_{2^i}(q)-1}\xi_{2^i}^{q^j}= 0, \quad \text{if and only if } i >  i_0~ \mathrm{or}~i=2.$$
	\end{enumerate}
\end{lemma}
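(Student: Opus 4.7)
The plan is to reduce both parts to the \emph{Lifting the Exponent} lemma (LTE) at $p=2$ for the order computation, and to an explicit parameterisation of the Galois orbit $\{q^j \bmod 2^i\}$ as a subgroup (or union of cosets) of $(\mathbb{Z}/2^i\mathbb{Z})^*$ for the trace; once the orbit is pinned down, the trace becomes a geometric series whose ratio is a power-of-$2$ root of unity, and its vanishing is decided by whether this ratio equals $1$.

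For part~(1), since $q \equiv 1 \pmod 4$, the standard form of LTE at $p=2$ immediately gives $v_2(q^n - 1) = v_2(q-1) + v_2(n) = i_0 + v_2(n)$, so the minimal $n$ with $2^i \mid q^n - 1$ is $1$ for $i \le i_0$ and $2^{i-i_0}$ for $i > i_0$. For the trace, we note that $\langle q \bmod 2^i\rangle$ has order $2^{i-i_0}$ and is contained in the kernel $\{x \equiv 1 \pmod{2^{i_0}}\}$ of the reduction map $(\mathbb{Z}/2^i\mathbb{Z})^* \to (\mathbb{Z}/2^{i_0}\mathbb{Z})^*$, which has the same cardinality, so the two coincide. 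Parameterising the orbit as $\{1 + 2^{i_0}k : 0 \le k < 2^{i-i_0}\}$, the trace factors as
\[
\xi_{2^i}\sum_{k=0}^{2^{i-i_0}-1} \xi_{2^{i-i_0}}^{\,k},
\]
which vanishes iff $\xi_{2^{i-i_0}} \ne 1$, i.e.\ iff $i > i_0$.

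For part~(2), we use $v_2(q-1) = 1$ and $v_2(q+1) = i_0$: the refined LTE for odd integers then gives $v_2(q^n - 1) = i_0 + v_2(n)$ for even $n$ and $v_2(q^n - 1) = 1$ for odd $n$. Since $i \ge 2$, odd $n$ cannot suffice, and the order formula follows. The trace is then attacked in two subcases. For $i \le i_0$ one has $q \equiv -1 \pmod{2^i}$, so the trace reduces to $\xi_{2^i} + \xi_{2^i}^{-1}$, which is $0$ iff $i=2$. For $i > i_0$ we apply part~(1) to $q^2 = 1 + 2^{i_0+1}c'$ (with $c' = -c + 2^{i_0-1}c^2$ odd, since $i_0 \ge 2$), identifying $\langle q^2 \bmod 2^i\rangle$ with the kernel of reduction modulo $2^{i_0+1}$; the decomposition $\langle q\rangle = \langle q^2\rangle \sqcup q\langle q^2\rangle$ then splits the trace into two pieces and yields, after factoring, a geometric series of $2^{i-i_0-1}$-th roots of unity whose vanishing settles the remaining range of $i$.

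The main obstacle is the bookkeeping in Case~(2): one has to carefully separate the contributions of odd and even exponents in the LTE analysis, identify the non-trivial coset $q\langle q^2\rangle$ explicitly using $q \equiv -1 + 2^{i_0} \pmod{2^{i_0 + 1}}$ (where the hypothesis that $c$ is odd enters), and combine the two coset-wise contributions. The exceptional value $i = 2$ appears in the statement because $\xi_4 + \xi_4^{-1} = 0$ holds for the purely numerical reason $\xi_4^2 = -1$, rather than through a geometric-series argument.
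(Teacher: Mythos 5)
Your treatment of part (1) and of the range $i \le i_0$ in part (2) is correct and is essentially the paper's own argument: the paper likewise identifies the orbit $\{q^j \bmod 2^i\}$ with $\{1+2^{i_0}k \mid 0\le k< 2^{i-i_0}\}$ by a cardinality-plus-containment argument and sums the resulting geometric series, and it cites the order formula from the literature where you derive it by lifting the exponent, which is a harmless substitution.

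The gap is in your final step for part (2) with $i>i_0$. The coset decomposition $\langle q\rangle=\langle q^2\rangle \sqcup q\langle q^2\rangle$ with $q^2=1+2^{i_0+1}c'$ is the right tool (the paper disposes of this case with the single word ``analogously''), but the sentence ``whose vanishing settles the remaining range of $i$'' hides the decisive boundary case $i=i_0+1$. There $\langle q^2\rangle$ is trivial modulo $2^i$, both of your geometric series have ratio $1$ and a single term, and the two cosets contribute $\xi_{2^i}+\xi_{2^i}^{\,q}=\xi_{2^i}-\xi_{2^i}^{-1}\neq 0$, because $q\equiv -1+2^{\,i-1} \bmod 2^i$ and $\xi_{2^i}^{2^{\,i-1}}=-1$. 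So your argument, carried out honestly, yields vanishing only for $i>i_0+1$, not for all $i>i_0$ as the statement requires. Concretely, for $q=3$ (so $i_0=2$) and $i=3$ one has $o_8(3)=2$ and $\tr_{\mathbb{F}_9/\mathbb{F}_3}(\xi_8)=\xi_8+\xi_8^{3}=\xi_8-\xi_8^{-1}=2\neq 0$ in $\mathbb{F}_9$, although $i>i_0$. Hence the proposal as written does not establish the claimed equivalence ``$\tr(\xi_{2^i})=0$ iff $i>i_0$ or $i=2$''; completing your computation instead shows that in case (2) the correct range is $i>i_0+1$ (together with $i=2$), a boundary issue that the paper's own proof also skips over at ``analogously''. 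You should either prove the corrected equivalence explicitly or isolate $i=i_0+1$ and show why it behaves differently --- the claimed vanishing there cannot be extracted from your (otherwise correct) coset analysis.
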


\begin{proof} If $q$ is a power of an odd prime and $i\in \mathbb{N}$, then
	the expression of order $o_{2^i}(q)$ as in the statement follows from (\cite{SBR07}, Section 2.2). \\
	Let  \( q = 1+2^{i_0}c  \), where \( c \) is an odd integer and  $i_0\geq 2$. Clearly, if $i \leq i_0$, then $o_{2^i}(q)=1$  and $\text{tr}(\xi_{2^i}) \neq  0.$
	Suppose $i > i_0$, so that  $o_{2^i}(q) = 2^{i - i_0 }$. We see that the sets $J:=\{ q^j \mid 0 \leq j < o_{2^i}(q) \}$
	and  \(K:=\{ 1 + 2^{i_0}k \mid 0 \leq k < 2^{i-i_0} \} \) contain same elements  modulo \( 2^i \). This is because the cardinalities of the sets $J$ and $K$ are same and for any \( q^j \in J \),  i.e., $0 \leq j < 2^{i - i_0 }$,  we can write
	\[
	q^j= (1+2^{i_0}c )^j \equiv 1 + 2^{i_0}k_j \mod 2^i,
	\]
	for some \( 0 \leq k_j < 2^{i-i_0} \), so that $1 + 2^{i_0}k_j \in K$ . Hence, 
	\[
	\tr(\xi_{2^i}) = \sum \limits_{j=0}^{o_{2^i}(q)-1}\xi_{2^i}^{q^j}=\sum \limits_{k=0}^{2^{i-i_0}-1} \xi_{2^i}^{1 + 2^{i_0}k }=\xi_{2^i} \sum\limits_{k=0}^{2^{i-i_0}-1} \left( \xi_{2^i}^{2^{i_0}} \right)^k=0,
	\] as $ \xi_{2^i}^{2^{i_0}}  \neq 1 $ when $i > i_0$. 
	Therefore,
	$
	\tr(\xi_{2^i}) = 0, ~\text{if~and ~only ~if} ~i > i_0.
	$
	\\
	Now, if \( q = -1 + 2^{i_0}c \), with \( c \) odd and $i_0\geq 2$, then   analogously we obtain that  
	$ \tr(\xi_{2^i}) = 0, $ for $i > i_0$.
	For \(  i \leq  i_0 \), we have  $o_{2^i}(q)=2$, and hence $
	\tr(\xi_{2^i}) = \xi_{2^i} + \xi_{2^i}^{-1}=0 $, if and only if $i=2.$
\end{proof}

		\section{Metacyclic 2-group codes}
		%\subsection{Metacyclic $2$-group codes having maximal cyclic subgroup}
		 In this section, we study metacyclic $2$-groups. Specifically, we work with metacyclic $2$-groups  which possess a maximal cyclic subgroup. As per (\cite{Huppert1}, I, Satz 14.9(b)), a metacyclic group of order $2^{n+1}$, where $n \geq 3$ which has a maximal cyclic subgroup, is isomorphic to  one of the following: 
		 	\begin{enumerate}
		\item[(i)] $D_{2^{n+1}} := \langle a, b \mid a^{2^{n}} = 1,\ b^2 = 1,\ b^{-1}ab = a^{-1} \rangle$ (dihedral).
		\item[(ii)] $Q_{2^{n+1}}:= \langle a, b \mid a^{2^{n}} = 1,\ b^2 = a^{2^{n-1}},\ b^{-1}ab = a^{-1} \rangle$  (generalized quaternion).

		\item[(iii)] $SD_{2^{n+1}} := \langle a, b \mid a^{2^{n}} = 1,\ b^2 = 1,\ b^{-1}ab = a^{-1 + 2^{n-1}} \rangle$  (semi-dihedral).
		
		\item[(iv)] $G_{2^{n+1}} := \langle a, b \mid a^{2^{n}} = 1,\ b^2 = 1,\ b^{-1}ab = a^{1 + 2^{n-1}} \rangle$ (ordinary metacyclic).
	\end{enumerate}

	It has been proved in~\cite[Theorem~5]{DFPM09}, that  
	$\mathbb{F}_q D_{2^{n+1}} \ \cong\ \mathbb{F}_q Q_{2^{n+1}}.$ An alternate way to prove this is via theory of strong Shoda pairs. We rather apply this theory to find out when   group algebras $\mathbb{F}_q D_{2^{n+1}}$ and $\mathbb{F}_q SD_{2^{n+1}}$ are isomorphic.  It turns out that $\mathbb{F}_q G_{2^{n+1}}$ is never isomorphic to  $\mathbb{F}_q SG_{2^{n+1}}$.
		\begin{theorem}\label{$F_qD_{2^{n+1}}$ Isomorphic $F_qD_{2^{n+1}}^-$}
		Let $n\geq 3$ and let $q$ be a prime power of some odd prime.
%		both $\mathbb{F}_q D_{2^{n+1}}$ $(\cong \mathbb{F}_q Q_{2^{n+1}})$ and $\mathbb{F}_q SD_{2^{n+1}}$ are semisimple group algebras.
 Then,
		\[
		\mathbb{F}_q D_{2^{n+1}}  (\cong \mathbb{F}_q Q_{2^{n+1}}) \cong \mathbb{F}_q SD_{2^{n+1}}
		\quad \text{if and only if} \quad q \not \equiv -1 \mod{2^{n-1}}.
		\]
	\end{theorem}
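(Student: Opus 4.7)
The plan is to compute and compare the Wedderburn decompositions of $\mathbb{F}_qD_{2^{n+1}}$ and $\mathbb{F}_qSD_{2^{n+1}}$ via strong Shoda pair theory. Since $\langle a\rangle$ is a normal maximal cyclic subgroup of index $2$ in both groups, and $G'=\langle a^2\rangle$ with $G/G'\cong C_2\times C_2$, the strong Shoda pairs of type $(G,K)$ are exactly $(G,G)$, $(G,\langle a\rangle)$, $(G,\langle a^2,b\rangle)$, $(G,\langle a^2,ab\rangle)$, contributing four copies of $\mathbb{F}_q$ in both algebras. The remaining strong Shoda pairs are $(\langle a\rangle,\langle a^{2^j}\rangle)$ for $j=2,\dots,n$; the case $j=1$ fails the maximal-abelian condition because $G/\langle a^2\rangle$ is abelian of order $4$.

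For each $j$ with $2\le j\le n-1$, the conjugation action of $b$ on $H/K=\langle a\rangle/\langle a^{2^j}\rangle$ is inversion in both groups, since $-1+2^{n-1}\equiv -1\mod{2^j}$. Hence by (\ref{equation 2}) the simple components arising from these pairs coincide in $\mathbb{F}_qD_{2^{n+1}}$ and $\mathbb{F}_qSD_{2^{n+1}}$, and any discrepancy must come from the pair $(\langle a\rangle,1)$, i.e.\ $j=n$, where $b$ acts as $a\mapsto a^{-1}$ on the dihedral side but as $a\mapsto a^{-1+2^{n-1}}$ on the semidihedral side. Writing $o:=o_{2^n}(q)$, the stabilizer $\mathcal{E}_G(H/K)$ equals $G$ (Case 1) iff the nontrivial element of $G/H$ preserves every $q$-cyclotomic coset of generators of $\Irr(H/K)$, which translates to $-1\in\langle q\rangle\mod{2^n}$ for $D_{2^{n+1}}$ and to $-1+2^{n-1}\in\langle q\rangle\mod{2^n}$ for $SD_{2^{n+1}}$; otherwise $\mathcal{E}_G(H/K)=\langle a\rangle$ (Case 2). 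A direct computation with (\ref{equation 2}) shows the two outcomes give nonisomorphic contributions ($2^{n-1}/o$ copies of $M_2(\mathbb{F}_{q^{o/2}})$ versus $2^{n-2}/o$ copies of $M_2(\mathbb{F}_{q^{o}})$), so isomorphism forces both groups into the same case.

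The main obstacle is then the number-theoretic analysis inside $(\mathbb{Z}/2^n)^\times$. Using the standard isomorphism $(\mathbb{Z}/2^n)^\times\cong\langle -1\rangle\times\langle 5\rangle\cong C_2\times C_{2^{n-2}}$ (valid for $n\ge 3$) together with the identity $1+2^{n-1}\equiv 5^{2^{n-3}}\mod{2^n}$, I would write $q\equiv(-1)^a 5^b$ and verify, in the spirit of Lemma \ref{lemma_trace_zero 2}, that $-1\in\langle q\rangle$ iff $(a,b)=(1,0)$, i.e.\ $q\equiv -1\mod{2^n}$, and that $-1+2^{n-1}=(-1)\cdot 5^{2^{n-3}}\in\langle q\rangle$ iff $(a,b)=(1,2^{n-3})$, i.e.\ $q\equiv-1+2^{n-1}\mod{2^n}$. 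Since $\langle q\rangle$ is cyclic, it cannot simultaneously contain $-1$ and $-1+2^{n-1}$ (together with their product $1+2^{n-1}$ they would generate a Klein four subgroup), so ``both in Case 1'' is impossible, and the algebras agree iff both groups land in Case 2. The two excluded residues $-1$ and $-1+2^{n-1}$ mod $2^n$ are precisely the two lifts of $-1$ mod $2^{n-1}$, giving $\mathbb{F}_qD_{2^{n+1}}\cong\mathbb{F}_qSD_{2^{n+1}}$ iff $q\not\equiv -1\mod{2^{n-1}}$; the converse direction is automatic from uniqueness of the Wedderburn decomposition.
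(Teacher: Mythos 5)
Your proposal is correct and follows essentially the same route as the paper: the list of strong Shoda pairs, the observation that every simple component except the one coming from $(\langle a\rangle,\langle 1\rangle)$ is literally the same in $\mathbb{F}_qD_{2^{n+1}}$ and $\mathbb{F}_qSD_{2^{n+1}}$ (since $-1+2^{n-1}\equiv -1 \bmod 2^{j}$ for $j\le n-1$), and the resulting reduction to the equivalence ``$-1\in\langle q\rangle \bmod 2^{n}$ iff $-1+2^{n-1}\in\langle q\rangle \bmod 2^{n}$'' are exactly the paper's steps, and both arguments resolve this via $U(2^{n})\cong\langle -1\rangle\times\langle 5\rangle$.

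The one genuine difference is how the final number-theoretic equivalence is settled. You characterize membership directly: $-1\in\langle q\rangle$ iff $q\equiv -1 \bmod 2^{n}$ and $-1+2^{n-1}\in\langle q\rangle$ iff $q\equiv -1+2^{n-1}\bmod 2^{n}$, because when $o_{2^{n}}(q)>2$ the unique involution of the cyclic group $\langle q\rangle$ is $1+2^{n-1}$; since $\langle q\rangle$ cannot contain two involutions, the ``both in Case 1'' possibility is ruled out, and the two excluded residues are precisely the lifts of $-1$ modulo $2^{n-1}$. The paper instead splits on $q\bmod 4$, and in its converse step asserts that $\langle q\rangle$ contains at least two elements of order $2$ — which cannot happen in a cyclic group; the conclusion survives because, as your argument shows, in that situation neither $-1$ nor $-1+2^{n-1}$ lies in $\langle q\rangle$, so both groups fall into the same case. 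Thus your handling of this step is not only correct but repairs the weakest point of the paper's own proof, while the algebraic skeleton is identical.
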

	
	\begin{proof}

	A set of strong Shoda pairs of $G$ , where $G=D_{2^{n+1}},~ Q_{2^{n+1}},~ SD_{2^{n+1}},$ computed using algorithm given in  \cite{BM14}, is given by 
	\[
	\mathcal{S}(G)=
	\big\{
	(G, G),\ 
	(G,\langle a \rangle),\ 
	(G, \langle a^2, b \rangle),\ 
	(G, \langle a^2, ab\rangle)
	\big\}
	\cup 
	\big\{
	(\langle a \rangle, \langle a^{2^j} \rangle)\ | \ \ 2 \leq j \leq n
	\big\}.
	\] 
	
	We shall observe that the difference in structure of group algebras possibly occurs, only due to the component corresponding to $(\langle a \rangle, \langle 1 \rangle)$. It follows from  the results stated in Section 2 that if $G=D_{2^{n+1}}$ or $Q_{2^{n+1}}$, then 

		\[
	\mathbb{F}_q G \cong
	\begin{cases}
	4\mathbb{F}_q ~ \bigoplus
		 \limits_{j=2}^{n} 
		\frac{\phi(2^j)}{o_{2^j}(q)} M_2\!\left( \mathbb{F}_{q^{\,o_{2^j}(q)}} \right), ~ \text{if } -1 \in \langle q \rangle \mod{2^n}, \\
		4\mathbb{F}_q ~\bigoplus 
		\limits_{j=2}^{j_0} 
		\frac{\phi(2^j)}{o_{2^j}(q)} \, M_2\!\left( \mathbb{F}_{q^{\,o_{2^j}(q)/2}} \right)
		\bigoplus
		 \limits_{j=j_0+1}^{n} 
		\frac{\phi(2^j)}{2o_{2^j}(q)} M_2\!\left( \mathbb{F}_{q^{\,o_{2^j}(q)}} \right),~\text{otherwise}
	\end{cases}
	\]
	\noindent where $j_0$ is such that  
	\begin{equation}\label{DQ}
		-1 \in \langle q \rangle \mod\ 2^{j_0} ~~\mathrm{but}~ -1 \notin \langle q \rangle \mod\ 2^{j_0+1}
	\end{equation}

	\noindent and \[
	\mathbb{F}_q SD_{2^{n+1}} \cong
	\begin{cases}
	4	\mathbb{F}_q~  \bigoplus
		\limits_{j=2}^{n} 
		\frac{\phi(2^j)}{o_{2^j}(q)}  
		M_2\!\left( \mathbb{F}_{q^{o_{2^j}(q)}} \right),~~ \text{if} -1+2^{n-1} \in \langle q \rangle\mod 2^n\\
		4\mathbb{F}_q~ \bigoplus
		\limits_{j=2}^{j_1} 
		\frac{\phi(2^j)}{o_{2^j}(q)} \, 
		M_2\!\left( \mathbb{F}_{q^{\,o_{2^j}(q)/2}} \right)
		\bigoplus 
		\limits_{j=j_1+1}^{n} 
		\frac{\phi(2^j)}{2o_{2^j}(q)} \,
		M_2\!\left( \mathbb{F}_{q^{\,o_{2^j}(q)}} \right),
		~ \text{otherwise}
		
	\end{cases}
	\]
	where $j_1$ is such that
	
	\begin{equation}\label{SD}
		-1+2^{n-1} \in \langle q \rangle\mod 2^{j_1} ~~\text{but}~
		-1+2^{n-1} \notin \langle q \rangle  \mod\ 2^{j_1+1}.
	\end{equation}

	Note that $j_0\leq n-1$ and if $j_0 <n-1$, then $j_1=j_0$. Now, if $j_0=n-1$, then 	$\mathbb{F}_q G   \cong \mathbb{F}_q SD_{2^{n+1}}$ if and only if $j_1=j_0$ and the conditions $	-1 \notin \langle q \rangle  \mod\ 2^{n}$ and $	-1+2^{n-1} \notin \langle q \rangle  \mod\ 2^{n}$ are equivalent. We prove that
	\begin{equation}\label{eq 8}
		-1 \in \langle q \rangle \mod{2^n} 
		\iff 
		-1 + 2^{n-1} \in \langle q \rangle \mod{2^n}
	\end{equation}

precisely when $ q \not \equiv -1 \mod{2^{n-1}}$.

Observe that  $U(2^n) \cong C_2 \times C_{2^{n-2}} \cong \langle -1 \rangle \times \langle 5 \rangle$, and hence $U(2^n)$ contains exactly three elements of order $2$, namely $-1$, $-1+2^{n-1}$ and $1-2^{n-1}$. %The elements $-1$ and $-1+2^{n-1}$ simultaneously belong to $\langle q \rangle$ if and only $ q \not \equiv -1 \mod{2^{n-1}}$.
	
	\smallskip
	\noindent\textbf{Case (i):} Suppose $q \equiv 1 \mod{4}$.  
	Then $\langle q \rangle = \langle 5^k \rangle$ for some $k$. Clearly, $-1 \notin \langle q \rangle$.  
	Since $5^{2^{n-3}} \equiv 1 - 2^{n-1} \mod{2^n}$, we have $1 - 2^{n-1} \in \langle 5 \rangle$.  
	Consequently, $-1 \cdot (1 - 2^{n-1}) = -1 + 2^{n-1} \in \langle -5 \rangle \not\subseteq \langle q \rangle$.  
	Hence, (\ref{eq 8}) holds.
	
	\smallskip
	\noindent\textbf{Case (ii):} Suppose $q \equiv -1 \mod{4}$.  
	We claim that (\ref{eq 8}) holds if and only if $q \not \equiv -1 \mod{2^{n-1}}$.  
	
	First, assume $q \equiv -1 \mod{2^{n-1}}$.  
	Then $q^2 \equiv 1 \mod{2^n}$, so $\langle q \rangle$ has only one element of order $2$. If this element is either $-1$ or $-1+2^{n-1}$, (\ref{eq 8}) does not hold, otherwise we must have the order $2$ element to be $q=1-2^{n-1}$ which  contradicts the assumption  $q \equiv -1 \mod{2^{n-1}}$.

	Conversely, if $q \not\equiv -1 \mod{2^{n-1}}$, then by \Cref{lemma_trace_zero 2} we have $o_{2^n}(q) > 2$.  
	Thus $\langle q \rangle$ contains at least two elements of order $2$, which forces the third element of order $2$ to also lie in $\langle q \rangle$.  
	Hence, the claim holds.
\end{proof}

	Since $\mathbb{F}_q D_{2^{n+1}}$ and $\mathbb{F}_q Q_{2^{n+2}}$ are always isomorphic, and $\mathbb{F}_q D_{2^{n+1}} \cong \mathbb{F}_q SD_{2^{n+1}}$ if and only if $ q \not \equiv -1 \mod 2^{n-1}$, it follows from the proof of  \Cref{$F_qD_{2^{n+1}}$ Isomorphic $F_qD_{2^{n+1}}^-$} that, except for the component corresponding to the strong Shoda pair $(\langle a \rangle, 1)$, the codes generated by the pcis of $\mathbb{F}_q D_{2^{n+1}}$ and $\mathbb{F}_q SD_{2^{n+1}}$ are equivalent. Moreover, the computation of the pcis corresponding to $(\langle a \rangle, 1)$ is somewhat similar in both $\mathbb{F}_q D_{2^{n+1}}$ and $\mathbb{F}_q SD_{2^{n+1}}$. Therefore, we consider the codes generated by the pcis of $\mathbb{F}_q D_{2^{n+1}}$  and $\mathbb{F}_q G_{2^{n+1}}$ only.\\
	
			We are now in position to write the pcis of the groups under consideration.
	
	\subsection{\underline{$\mathbf{D_{2^{n+1}}}$}}

	$D_{2^{n+1}} := \langle a, b \mid a^{2^{n}} = 1,\ b^2 = 1,\ b^{-1}ab = a^{-1} \rangle$ with $n \geq 3.$
	%\[\mathcal{S}(D_{2^{n+1}}):=\{(D_{2^{n+1}},D_{2^{n+1}}), ~(D_{2^{n+1}},\langle a \rangle),~(SD_{2^{n+1}}, \langle a^2, b \rangle),\ (SD_{2^{n+1}}, \langle a^2, ab\rangle)\}
	%\]
	%\[\{(\langle a \rangle, \langle a^{2^j} \rangle)_{j=2}^n\}\].
	\begin{theorem}\label{prop_Dihedral}
		Let	$\mathbb{F}_q$ be a finite field containing $q$ elements, where $q$ is power of some odd prime so that $q$ is of the form $q= \pm  1 + 2^{i_0}c$, where $c$ is odd and $i_0 \geq 2$. The pcis of $\mathbb{F}_qD_{2^{n+1}}$ are as in \Cref{tab3:my_label}.%\ChSugandha{[Check the differences in table, subsequently the proof content]}: 
		\begin{small}
			\begin{table}[hbt!]

				\begin{tabular}{|ccl|}			
					\hline 
					
					\hline
					&&	if ~$q= \pm 1+2^{i_0}c$ with $c$ odd. \\
					
					$e_1$ &$:=$&$e_C(D_{2^{n+1}},D_{2^{n+1}},D_{2^{n+1}}), C\in \mathcal{R}(D_{2^{n+1}}/D_{2^{n+1}}) $\\
					
					&$=$ &$\widehat{D_{2^{n+1}}}$   \\&&\\

					$e_{2}$ &$:= $ & $e_C(D_{2^{n+1}},D_{2^{n+1}}, \langle a\rangle),C\in \mathcal{R}(D_{2^{n+1}}/ \langle a\rangle)$ \\

					&$=$&$\widehat{\langle a \rangle}-\widehat{D_{2^{n+1}}}$
					\\&&\\
										
					$e_{3}$ &$:= $ & $e_C(D_{2^{n+1}},D_{2^{n+1}}, \langle a^2,b\rangle),C\in \mathcal{R}(D_{2^{n+1}}/ \langle a^2,b\rangle)$\\
					
					&$=$&$\widehat{\langle a^2,b \rangle }-\widehat{D_{2^{n+1}}}$ \\&&\\

					$e_{4}$ &$:= $ & $e_C(D_{2^{n+1}},D_{2^{n+1}}, \langle a^2,ab\rangle),C\in \mathcal{R}(D_{2^{n+1}}/ \langle a^2,ab\rangle)$\\
					&$=$&$\widehat{\langle a^2,ab \rangle}-\widehat{D_{2^{n+1}}}$ \\

					\hline 	
					
					\hline 
					&&	if ~$q= 1+2^{i_0}c$ with $c$ odd. \\
%					\underline{\textbf{$-1\in\langle q\rangle $}}&&\\

					$e_{2^{j},k}$ &$:= $ & $e_C(D_{2^{n+1}},\langle a \rangle,\langle a^{2^{j}} \rangle),C=C_q(\gamma^k)\in \mathcal{R}(\langle a \rangle/\langle a^{2^{j}} \rangle),2 \leq j \leq n$ \\
					%	&  &  \\
					&$=$&$\begin{cases}
						\frac{1}{2^{j}} \widehat{\langle a^{2^{j }}\rangle}\sum\limits_{\mathfrak{i}=0}^{2^{j}-1} [\tr(\xi_{2^{j}}^{k\mathfrak{i}})]a^{-\mathfrak{i}},~~~~~~~~~~~~~~~\mathrm{if} ~ 1\leq j \leq i_0\\
						\frac{1}{2^{j}} \widehat{\langle a^{2^{j }}\rangle}\sum\limits_{\mathfrak{i}'=0}^{2^{i_0}-1}[\tr(\xi_{2^{j}}^{k\mathfrak{i}'{2^{j-i_0}}})]a^{-\mathfrak{i}'{2^{j-i_0}}},~~\mathrm{otherwise}.
					\end{cases}$
					\\
%					\underline{\textbf{$-1\not\in\langle q \rangle$}}&&\\
%					
%					%	&&\\
%					
%					$e_{2^{j},k}$ &$:= $ & $e_C(D_{2^{n+1}},\langle a \rangle,\langle a^{2^{j}} \rangle),C=C_q(\gamma^k)\in \mathcal{R}(\langle a \rangle/\langle a^{2^{j}} \rangle), 2\leq j \leq n$ \\
%					%	&  &  \\
%					
%					&$=$&$\begin{cases}
%						\frac{1}{2^{j}} \widehat{\langle a^{2^{j }}\rangle}\sum\limits_{\mathfrak{i}=0}^{2^{j}-1} [\tr(\xi_{2^{j}}^{k\mathfrak{i}}) + \tr(\xi_{2^{j}}^{-k\mathfrak{i}})]a^{-\mathfrak{i}},~~~~~~~~~~~~~~~~~~~~~\mathfrak{i}\neq 2^{j-2},~ 3\cdot 2^{j-2}~\mathrm{if} ~ 1\leq j \leq i_0\\
%						\frac{1}{2^{j}} \widehat{\langle a^{2^{j }}\rangle}\sum\limits_{\mathfrak{i}'=0}^{2^{i_0}-1}[\tr(\xi_{2^{j}}^{{k\mathfrak{i}'2^{j-i_0}}})+\tr(\xi_{2^{j}}^{{-k\mathfrak{i}'2^{j-i_0}}})]a^{-\mathfrak{i}'{2^{j-i_0}}},\mathfrak{i}'\neq 2^{j-2},~ 3\cdot 2^{j-2}~\mathrm{otherwise}.
%					\end{cases}$				\\
%					%		&  &  \\

					\hline
					
					\hline 
					
					&&	if ~$q= -1+2^{i_0}c$ with $c$ odd. \\
					\underline{\textbf{$-1\in\langle q\rangle\mod 2^j $}}&&\\

					%	&&\\
					$e_{2^{j},k}$ &$:= $ & $e_C(D_{2^{n+1}},\langle a \rangle,\langle a^{2^{j}} \rangle),C=C_q(\gamma^k)\in \mathcal{R}(\langle a \rangle/\langle a^{2^{j}} \rangle),2\leq j \leq n$ \\
					%	&  &  \\
					
					&$=$&$\begin{cases}
						\frac{1}{2^{j}} \widehat{\langle a^{2^{j }}\rangle}\sum\limits_{\mathfrak{i}=0}^{2^{j}-1} [\tr(\xi_{2^{j}}^{k\mathfrak{i}})]a^{-\mathfrak{i}},~~~~~~~~~~~~~~~\mathrm{if} ~ 3\leq j \leq i_0\\
						\widehat{\langle a^2 \rangle }-	\widehat{\langle a^4 \rangle },~~~~~~~~~~~~~~~~~~~~~~~~~~~~~~\mathrm{if}~ j=2\\
						\frac{1}{2^{j}} \widehat{\langle a^{2^{j }}\rangle}\sum\limits_{\mathfrak{i}'=0}^{2^{i_0}-1}[\tr(\xi_{2^{j}}^{k\mathfrak{i}'{2^{j-i_0}}})]a^{-\mathfrak{i}'{2^{j-i_0}}},~ \mathfrak{i}'\neq 2^{j-2},~ 3\cdot 2^{j-2}	,~\mathrm{if} ~~j>i_0.
					\end{cases}$
					\\

					\underline{\textbf{$-1\not\in\langle q \rangle \mod 2^j$}}&&\\
					
					%	&&\\
					%	&&\\
					$e_{2^{j},k}$ &$:= $ & $e_C(D_{2^{n+1}},\langle a \rangle,\langle a^{2^{j}} \rangle),C=C_q(\gamma^k)\in \mathcal{R}(\langle a \rangle/\langle a^{2^{j}} \rangle),2\leq j \leq n$ \\
					%	&  &  \\
					
					&$=$&$\begin{cases}
						\frac{1}{2^{j}} \widehat{\langle a^{2^{j }}\rangle}\sum\limits_{\mathfrak{i}=0}^{2^{j}-1} [\tr(\xi_{2^{j}}^{k\mathfrak{i}}) +\tr(\xi_{2^{j}}^{-k\mathfrak{i}})]a^{-\mathfrak{i}},~~~~~~~~~~~~~~~\mathrm{if} ~ 3 \leq j \leq i_0, ~ \mathfrak{i}\neq 2^{j-2},~ 3\cdot 2^{j-2}\\
						\widehat{\langle a^2 \rangle }-	\widehat{\langle a^4 \rangle },~~~~~~~~~~~~~~~~~~~~~~~~~~~~~~~~~~~~~~~~~~~\mathrm{if}~ j=2\\
						\frac{1}{2^{j}} \widehat{\langle a^{2^{j }}\rangle}\sum\limits_{\mathfrak{i}'=0}^{2^{i_0}-1}[\tr(\xi_{2^{j}}^{k\mathfrak{i}'{2^{j-i_0}}}) + \tr(\xi_{2^{j}}^{-k\mathfrak{i}'{2^{j-i_0}}})]a^{-\mathfrak{i}'{2^{j-i_0}}},~\mathfrak{i}'\neq 2^{j-2},~ 3\cdot 2^{j-2}	,~\mathrm{if} ~j>i_0.
					\end{cases}$
					\\
					
					\hline

				\end{tabular}
				
				\caption{Pcis of  $D_{2^{n+1}}$}
				\label{tab3:my_label}
				
					For $2 \leq j \leq n$, 
				the possible choices of $k$ yield $\tfrac{\varphi(2^j)}{o_{2^j}(q)}$ distinct idempotents  when $-1 \in \langle q\rangle \mod{2^j}$ and $\tfrac{\varphi(2^j)}{2o_{2^j}(q)}$ distinct idempotents when $-1 \notin \langle q\rangle \mod{2^j}$, $\varphi$ being the Euler totient function.
				
			\end{table}
		
		\end{small}

			\end{theorem}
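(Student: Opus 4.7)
The plan is to enumerate $\mathcal{S}(D_{2^{n+1}})$ and apply the idempotent formula~(1) of Section~2 to each pair, then simplify the resulting character sums by means of \Cref{lemma_trace_zero 2}. As already exhibited in the proof of \Cref{$F_qD_{2^{n+1}}$ Isomorphic $F_qD_{2^{n+1}}^-$}, the set $\mathcal{S}(D_{2^{n+1}})$ consists of the four pairs with $H=G$, namely $(G,G)$, $(G,\langle a\rangle)$, $(G,\langle a^2,b\rangle)$, $(G,\langle a^2,ab\rangle)$, together with the family $(\langle a\rangle,\langle a^{2^j}\rangle)$ for $2\leq j\leq n$. For the four $H=G$ cases, $G/K$ is cyclic of order at most $2$ and $o_{|G/K|}(q)=1$, so every cyclotomic coset is a singleton and formula~(1) collapses, by a standard M\"obius-type argument, to $\widehat{G}$ when $K=G$ and to $\widehat{K}-\widehat{G}$ otherwise; this yields $e_1,\ldots,e_4$.

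The substantive content is the family $(\langle a\rangle,\langle a^{2^j}\rangle)$. Setting $H=\langle a\rangle$, $K=\langle a^{2^j}\rangle$, and fixing a generator $\gamma\in\Irr(H/K)$ with $\gamma(aK)=\xi_{2^j}$, formula~(1) immediately gives, for $C=C_q(\gamma^k)$ with $\gcd(k,2^j)=1$,
\[
\epsilon_C(H,K)=\frac{1}{2^j}\,\widehat{\langle a^{2^j}\rangle}\sum_{i=0}^{2^j-1}\tr(\xi_{2^j}^{ki})\,a^{-i}.
\]
To pass from $\epsilon_C$ to $e_C(G,H,K)$, I use that $\langle a\rangle\triangleleft G$, so the $G$-action on $\mathcal{C}(H/K)$ factors through $G/\langle a\rangle$, and $b^{-1}ab=a^{-1}$ sends $\gamma^k$ to $\gamma^{-k}$. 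Thus the $G$-orbit of $C$ is a singleton iff $-1\in\langle q\rangle\pmod{2^j}$, giving $e_C=\epsilon_C$ (the single-trace form in the table); otherwise $e_C=\epsilon_C+\sigma(\epsilon_C)$, which replaces the inner sum by $\tr(\xi_{2^j}^{ki})+\tr(\xi_{2^j}^{-ki})$ (the symmetrised form).

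The final step is the identification of the surviving terms via \Cref{lemma_trace_zero 2}. In the branch $q=1+2^{i_0}c$ with $j\leq i_0$, $o_{2^j}(q)=1$ and no cancellation occurs; for $j>i_0$ the lemma forces $\tr(\xi_{2^j}^{ki})=0$ unless the $2$-adic valuation of $ki$ is at least $j-i_0$, which (as $k$ is odd) is equivalent to $2^{j-i_0}\mid i$, and the re-indexing $i=i'2^{j-i_0}$, $0\leq i'<2^{i_0}$, produces the displayed formula. The branch $q=-1+2^{i_0}c$ proceeds analogously for $j\geq 3$, while the exceptional case $j=2$ is handled by a direct four-term evaluation using $\tr(\xi_4)=\tr(-\xi_4)=0$, $\tr(1)=2$, $\tr(-1)=-2$, collapsing $\epsilon_C$ into a signed difference of $\widehat{\langle a^2\rangle}$ and $\widehat{\langle a^4\rangle}$. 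I expect the main obstacle to lie in the combined $q=-1+2^{i_0}c$ analysis: one must simultaneously track the $G$-orbit pairing $C\leftrightarrow\sigma(C)$, the exceptional indices $i=2^{j-2},\,3\cdot 2^{j-2}$ where $\xi_{2^j}^{ki}$ has order exactly $4$ and the symmetrised trace vanishes, and the Galois orbit size, so as to enumerate without duplication exactly the indices surviving in the final table.
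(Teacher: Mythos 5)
Your route is essentially the paper's: enumerate $\mathcal{S}(D_{2^{n+1}})$, reduce the four $H=G$ pairs to $\widehat{G}$ and $\widehat{K}-\widehat{G}$, write $\epsilon_C(\langle a\rangle,\langle a^{2^j}\rangle)$ from the defining formula, decide whether $e_C=\epsilon_C$ or $e_C=\epsilon_C+(\text{its }b\text{-conjugate})$ according to whether $-1\in\langle q\rangle \bmod 2^j$, and then identify the surviving indices via \Cref{lemma_trace_zero 2}, re-indexing $\mathfrak{i}=\mathfrak{i}'2^{\,j-i_0}$ for $j>i_0$ and evaluating $j=2$ directly. The only structural difference is that the paper disposes of the branch $q\equiv 1\pmod 4$ by invoking (\cite{CM25}, Proposition 4.1), extended to $p=2$ through \Cref{lemma_trace_zero 2}, and only computes the branch $q\equiv -1\pmod 4$; you treat both branches directly, which is a legitimate and more self-contained choice.

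There is, however, a genuine inconsistency in your $q=1+2^{i_0}c$ branch. By your own (correct) orbit criterion, conjugation by $b$ sends $C_q(\gamma^{k})$ to $C_q(\gamma^{-k})$, and these cosets coincide iff $-1\in\langle q\rangle\bmod 2^j$; but when $q\equiv 1\pmod 4$ every power of $q$ is $\equiv 1\pmod 4$, so $-1\notin\langle q\rangle\bmod 2^j$ for every $j\geq 2$. Your argument therefore yields the symmetrised expression with $\tr(\xi_{2^j}^{k\mathfrak{i}})+\tr(\xi_{2^j}^{-k\mathfrak{i}})$ and the halved count $\varphi(2^j)/(2o_{2^j}(q))$ (as the table's caption itself records), not the single-trace formula you claim to recover. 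The single-trace element is in general not even central: for $q=5$, $j=2$ one gets $\tfrac14\widehat{\langle a^{4}\rangle}\,(1+\xi_4 a^{-1}-a^{-2}-\xi_4 a^{-3})$, which is not $b$-invariant, whereas $\epsilon_C$ plus its $b$-conjugate equals $\widehat{\langle a^{4}\rangle}-\widehat{\langle a^{2}\rangle}$ and is the actual pci. So in that branch you must either carry out the symmetrisation (and then address the resulting discrepancy with the displayed single-trace entry, an issue the paper sidesteps by deferring this case to \cite{CM25}), or supply a justification for the single-trace form, which does not exist in general. The remainder of your argument — the $H=G$ pairs, the $j=2$ evaluation, the exclusion of $\mathfrak{i}=2^{j-2},\,3\cdot 2^{j-2}$, and the re-indexing for $j>i_0$ — is sound and agrees with the paper's computation for $q\equiv -1 \pmod 4$.
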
	
			\begin{proof}
					If $q \equiv 1\mod 4$, then (\cite{CM25}, Proposition 4.1) holds for any prime $p$ (including $p=2$). This is because, in this case, 	in view of \Cref{lemma_trace_zero 2}, the result in (\cite{CM25}, Lemma 3.2) holds for any prime $p$ (not necessarily odd).\\
					Hence, assuming  $q \equiv -1 \mod 4$, we write the pcis of $\mathbb{F}_qD_{2^{n+1}}$ corresponding to $(\langle a \rangle,\langle a^{2^{j}} \rangle) \in \mathcal{S}(D_{2^{n+1}})$, where  $1\leq j\leq n$, i.e., $e_C(D_{2^{n+1}},\langle a \rangle, \langle a^{2^{j}} \rangle)$, where $C\in \mathcal{R}(\langle a\rangle/\langle a^{2^{j}} \rangle)$ for   $1\leq j \leq n$.
%					 As $\langle a \rangle/\langle a^{2^{j}} \rangle$ is cyclic group of order $2^{j}$,  we have that
%			the number of $q$ cyclotomic classes of $\Irr(\langle a \rangle/ \langle a^{2^{j}} \rangle)$ containing generators equals $\frac{\phi(2^{j})}{o_{2^j}(q)}$. 
	If $-1\in \langle q\rangle$ then 
			$\mathcal{R}(\langle a \rangle/ \langle a^{2^{j}} \rangle)=\mathcal{C}(\langle a \rangle/ \langle a^{2^{j}} \rangle)$ 
			and
			 in this case  we have  
				$$\begin{aligned}
					e_C(G,\langle a \rangle,\langle a^{2^{j}} \rangle)=\epsilon_C(\langle a \rangle,\langle a^{2^{j}} \rangle)
					=\frac{1}{2^{j}}\widehat{\langle a^{2^{j}} \rangle}\Sigma_{\mathfrak{i}=0}^{2^{j}-1}\tr(\xi_{2^{j}}^\mathfrak{i}})){a^{-\mathfrak{i}},
				\end{aligned}$$
					where
					$$\tr(\xi_{2^{j}}^{k\mathfrak{i}})=(\xi_{2^{j}}^{k\mathfrak{i}})^{q^0}+(\xi_{2^{j}}^{k\mathfrak{i}})^q+(\xi_{2^{j}}^{k\mathfrak{i}})^{q^2}+....+(\xi_{2^{j}}^{k\mathfrak{i}})^{q^{o_{2^j}(q)-1}}.$$
				
				Now, if $1\leq j \leq i_0$, $j \neq 2$ then $\tr(\xi_{2^{j}}^{k\mathfrak{i}})\neq 0$,  by \Cref{lemma_trace_zero 2} and if $j=2, $ then 
				$$\begin{aligned}
					\epsilon_C(\langle a \rangle , \langle a^4 \rangle)&=\frac{1}{4}\widehat{\langle a^4 \rangle}\sum \limits_{\mathfrak{i}=0}^{3}\tr(\gamma^{k}(\overline{a^\mathfrak{i}})){a^{-\mathfrak{i}}}
					%&=\frac{1}{4}\widehat{\langle a^4 \rangle}[2-2a^{-2}]
					=	
					\widehat{\langle a^4 \rangle}-\widehat{\langle a^2 \rangle}.
				\end{aligned}$$
					Assume $i_0<j\leq m$. In this case, $\xi_{2^{j}}^{k\mathfrak{i}}$ is $2^{i}$-th primitive root of unity, if $\gcd(\mathfrak{i},2^{j})=2^{j-i}$. 
%						Thus the summation in the trace term contains sub-blocks of the form 
%					$(\xi_{2^{i}})^{q^0}+(\xi_{2^{i}})^q+(\xi_{2^{i}})^{q^2}+....+(\xi_{2^{i}})^{q^{o_{2^{i}}(q)-1}}$, each of length $o_{2^{i}}(q)$.
					 Hence,  $$\tr(\xi_{2^{j}}^{k\mathrm{i}})=\frac{o_{2^{j}}(q)}{o_{2^{i}}(q)}[(\xi_{2^{i}})^{q^0}+(\xi_{2^{i}})^q+(\xi_{2^{i}})^{q^2}+....+(\xi_{2^{i}})^{q^{o_{2^{i}}(q)-1}}].$$
						Note that by \Cref{lemma_trace_zero 2}, the above term is zero if and only if  $i>i_0$ or $i=2$. Therefore, the terms which do not vanish are precisely the ones where $i_0\geq  i$ and $i \neq 2$, i.e., $\mathfrak{i}$ is a multiple of $2^{j-i_0}$ but not an odd multiple of $2^{j-2}$. So we obtain $$e_C(D_{2^{n+1}},\langle a \rangle,\langle a^{2^{j}} \rangle)=  \frac{1}{2^{j}} \widehat{\langle a^{2^{j }}\rangle}\sum\limits_{\mathfrak{i}'=0}^{2^{i_0}-1}[\tr(\xi_{2^{j}}^{tk\mathfrak{i}'{2^{j-i_0}}})]a^{-\mathfrak{i}'{2^{j-i_0}}}, \mathfrak{i}'\neq 2^{j-2},~ 3\cdot 2^{j-2}.$$
					If $-1\notin\langle q\rangle\mod{2^j}$, then
				\[
				\begin{aligned}
					e_C(D_{2^{n+1}},\langle a\rangle,\langle a^{2^{j}}\rangle)
					&=\frac{1}{2^{j}}\widehat{\langle a^{2^{j}}\rangle}
					\sum\limits_{\mathfrak{i}=0}^{2^{j}-1}\big[\tr(\xi_{2^{j}}^{k\mathfrak{i}})+\tr(\xi_{2^{j}}^{-k\mathfrak{i}})\big]a^{-\mathfrak{i}}\\
					&=\frac{1}{2^{j}}\widehat{\langle a^{2^{j}}\rangle}
					\sum\limits_{\mathfrak{i}=0}^{2^{j}-1}\tr\big(\xi_{2^{j}}^{k\mathfrak{i}}+\xi_{2^{j}}^{-k\mathfrak{i}}\big)\,a^{-\mathfrak{i}}.
				\end{aligned}
				\]
				
				Note that \(\xi_{2^{j}}^{k\mathfrak{i}}+\xi_{2^{j}}^{-k\mathfrak{i}}\neq 0\) precisely when
				\(\mathfrak{i}\neq 2^{j-2},\,3\cdot 2^{j-2}\).
			Therefore,	by \Cref{lemma_trace_zero 2}, we have 
				
				\begin{itemize}
					\item[(i)] If \(1\le j\le i_0\), then
					\[
					e_C(D_{2^{n+1}},\langle a\rangle,\langle a^{2^{j}}\rangle)
					=\frac{1}{2^{j}}\widehat{\langle a^{2^{j}}\rangle}
					\sum\limits_{\substack{\mathfrak{i}=0\\ \mathfrak{i}\neq 2^{j-2},\,3\cdot2^{j-2}}}^{2^{j}-1}
					\tr\big(\xi_{2^{j}}^{k\mathfrak{i}}+\xi_{2^{j}}^{-k\mathfrak{i}}\big)\,a^{-\mathfrak{i}},
					\]
					and for these \(\mathfrak{i}\) the trace is non-zero.
					
					\item[(ii)] If \(i_0<j\le m\), write \(\mathfrak{i}=\mathfrak{i}'2^{\,j-i_0}\) with
					\(\mathfrak{i}'=0,\dots,2^{i_0}-1\). Then the sum reduces to
					\[
					e_C(D_{2^{n+1}},\langle a\rangle,\langle a^{2^{j}}\rangle)
					=\frac{1}{2^{j}}\widehat{\langle a^{2^{j}}\rangle}
					\sum\limits_{\mathfrak{i}'=0}^{2^{i_0}-1}
					\tr\big(\xi_{2^{j}}^{k\mathfrak{i}'2^{\,j-i_0}}+\xi_{2^{j}}^{-k\mathfrak{i}'2^{\,j-i_0}}\big)\,a^{-\mathfrak{i}'2^{\,j-i_0}},
					\]
					where the vanishing indices correspond to \(\mathfrak{i}'=2^{\,i_0-2}\) and
					\(\mathfrak{i}'=3\cdot 2^{\,i_0-2}\). Thus the effective summation excludes
					\(\mathfrak{i}'=2^{\,i_0-2},\,3\cdot 2^{\,i_0-2}\), and for the remaining \(\mathfrak{i}'\)
					the trace is non-zero.
				\end{itemize}
				 \end{proof}
					Using  \Cref{prop_Dihedral} we obtain $\dim_{\mathbb{F}_q}(\mathbb{F}_q D_{2^{n+1}} e_{2^j,k})$, an  $\mathbb{F}_q$-basis of $\mathbb{F}_q D_{2^{n+1}} e_{2^j,k}$ and bounds on the distance $d:=d(\mathbb{F}_qD_{2^{n+1}}e_{2 ^{j },k})$ exactly as obtained in Corollaries 4.3 and 4.5 of \cite{CM25} with $p=2$. The parameters and the basis remain unchanged except that the upper bound improves in the current case because of the reduced support.

	\begin{remark}
		It is worth noting that the dihedral codes discussed in \cite{GR22b}, \cite{GR22a} and \cite[Section~4]{DFPM09} can be obtained as special cases of results in this subsection, when the group under consideration is a dihedral group of order $2^{n+1}$. Each of these works considers different conditions on the field size $q$: $q$ is of the form $8c \pm 1$ with $c$ odd in \cite{GR22a}, $o_{2^n}(q) = 1$ or $2$ in \cite{GR22b}, and $o_{2^n}(q) = \phi(2^n)$ in \cite{DFPM09}. Consequently, the corresponding results on code dimension and minimum distance in those works follow as special cases. In \Cref{prop_Dihedral}, we provide a unified treatment of these computations for all such choices of $q$.
	\end{remark}
	 It has also been observed that non-central group codes play an equally significant role; in fact, they often yield codes that are inequivalent to abelian codes and may even possess better parameters. Such codes correspond to left (or right) ideals of the group algebra, and every left (right) ideal $I \subseteq \mathbb{F}_q G$ can be generated by a suitable idempotent. A complete set of pairwise orthogonal irreducible left idempotents of $\mathbb{F}_qD_{2^{n+1}}$ follows from  above theorem and  (\cite{APM19}, Proposition~2.5).

\begin{corollary}\label{left pcis F_qD_{2^{n+1}}}
	The semisimple group algebra $\mathbb{F}_qD_{2^{n+1}}$ decomposes into minimal left ideals generated by a complete set of primitive orthogonal idempotents, given by:

	\begin{itemize}
		\item[(i)] Four central idempotents: $e_1, e_2, e_3, e_4$.
		\item[(ii)] $2 \dfrac{\phi(2^j)}{\kappa o_{2^j}(q)}$ left  idempotents: $e_{2^j,k}\widehat{\langle b \rangle}$ and $e_{2^j,k}(1-\widehat{\langle b \rangle})$ with $2 \leq j \leq n$,	where $\kappa=2$ for $r+1\leq j \leq n$, if $r$ is such that $-1 \in \langle q \rangle \mod{2^r}$ but $-1 \notin \langle q \rangle \mod{2^{r+1}}$ and $\kappa=1$ in all other cases.
	\end{itemize}
\end{corollary}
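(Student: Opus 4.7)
The proof will split naturally into two parts, corresponding to the two items in the statement.

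For item (i), note that the pcis $e_1,e_2,e_3,e_4$ come from strong Shoda pairs of the form $(G,K)$ with $G/K$ cyclic. By Theorem 2.1 (applied with $H=G$), the corresponding simple component satisfies $\mathbb{F}_qGe_i\cong\mathbb{F}_{q^{o_{|G/K|}(q)}}$, which is a field. Since a field has no nontrivial idempotents, each $e_i$ is already primitive in $\mathbb{F}_qG$; no further refinement is needed.

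For item (ii), fix a pci $e:=e_{2^j,k}$ with $2\le j\le n$. Then $e$ comes from a strong Shoda pair $(\langle a\rangle,\langle a^{2^j}\rangle)$ with $\langle a\rangle$ of index $2$ in $D_{2^{n+1}}$, so by the isomorphism in Equation~(\ref{equation 2}) the corresponding simple component is $\mathbb{F}_qD_{2^{n+1}}e\cong M_2(\mathbb{F}_{q^{\ell}})$ for a suitable $\ell$ determined by $o_{2^j}(q)/[E:\langle a\rangle]$. A minimal left ideal of $M_2(\mathbb{F}_{q^\ell})$ has $\mathbb{F}_q$-dimension $2\ell$, so to exhibit a primitive left decomposition of $e$ it suffices to produce two orthogonal left idempotents whose left ideals each have dimension $2\ell$ and whose sum is $e$. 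The plan is to use the idempotents $\widehat{\langle b\rangle}$ and $1-\widehat{\langle b\rangle}$ of $\mathbb{F}_qG$ (they are genuine idempotents since $b^2=1$ and $q$ is odd) exactly as in \cite[Proposition~2.5]{APM19}. Orthogonality is immediate from centrality of $e$:
\[
\bigl(e\widehat{\langle b\rangle}\bigr)\bigl(e(1-\widehat{\langle b\rangle})\bigr)=e^{2}\,\widehat{\langle b\rangle}(1-\widehat{\langle b\rangle})=0,
\]
and $e\widehat{\langle b\rangle}+e(1-\widehat{\langle b\rangle})=e$. The remaining task is to verify primitivity, equivalently, that both $\mathbb{F}_qGe\widehat{\langle b\rangle}$ and $\mathbb{F}_qGe(1-\widehat{\langle b\rangle})$ have $\mathbb{F}_q$-dimension $2\ell$. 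This is the step where \cite[Proposition~2.5]{APM19} is invoked: the hypotheses (abelian subgroup $\langle a\rangle$ of index $2$, involution $b$ outside $\langle a\rangle$, and action by inversion on the relevant quotient) are satisfied in $D_{2^{n+1}}$ for every pair $(\langle a\rangle,\langle a^{2^j}\rangle)$ with $j\ge 2$, so the result applies verbatim and yields the required decomposition.

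Finally, to obtain the count $2\,\phi(2^j)/(\kappa\, o_{2^j}(q))$, observe that by Theorem~\ref{prop_Dihedral} (and the formulas for $\mathbb{F}_qG$ recorded in the proof of Theorem~\ref{$F_qD_{2^{n+1}}$ Isomorphic $F_qD_{2^{n+1}}^-$}) the number of inequivalent pcis of the form $e_{2^j,k}$ is exactly $\phi(2^j)/o_{2^j}(q)$ when $-1\in\langle q\rangle\bmod 2^j$ and $\phi(2^j)/(2\,o_{2^j}(q))$ otherwise; this is precisely the definition of $\kappa$ given in the statement. Splitting each such pci into the two left idempotents from the previous paragraph produces $2\,\phi(2^j)/(\kappa\, o_{2^j}(q))$ primitive orthogonal left idempotents per level $j$, and summing over $2\le j\le n$ together with the four central pcis of item (i) exhausts the unit $1\in\mathbb{F}_qD_{2^{n+1}}$.

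The only substantive point is the primitivity statement, and this is precisely what \cite[Proposition~2.5]{APM19} provides once the applicability of that result to each pair $(\langle a\rangle,\langle a^{2^j}\rangle)$ is checked; orthogonality, summation, and counting are then bookkeeping consequences of what has already been established in Theorem~\ref{prop_Dihedral}.
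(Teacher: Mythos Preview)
Your proposal is correct and follows essentially the same approach as the paper: the paper's own justification is the single sentence preceding the corollary, namely that the complete set of primitive orthogonal left idempotents follows from \Cref{prop_Dihedral} together with \cite[Proposition~2.5]{APM19}. Your write-up simply unpacks this citation in detail---verifying that the four $(G,K)$-type pcis land in field components (hence are already primitive), checking orthogonality and summation for $e_{2^j,k}\widehat{\langle b\rangle}$ and $e_{2^j,k}(1-\widehat{\langle b\rangle})$, invoking \cite[Proposition~2.5]{APM19} for primitivity, and reading off the count $\phi(2^j)/(\kappa\,o_{2^j}(q))$ from the caption of \Cref{tab3:my_label}---so there is no substantive difference.
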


We next consider the group algebra $\mathbb{F	}_qG_{2^{n+1}}$.
	\subsection{\underline{$\mathbf{G_{2^{n+1}}}$}}
$G_{2^{n+1}} := \langle a, b \mid a^{2^{n}} = 1,\ b^2 = 1,\ b^{-1}ab = a^{1+2^{n-1}} \rangle$, $n \geq 3.$ 

	\begin{theorem}\label{prop_G_{2^{n+1}}}
		Let	$\mathbb{F}_q$ be a finite field containing $q$ elements, where $q$ is power of some odd prime so that $q$ is of the form $q= \pm  1 + 2^{i_0}c$, where $c$ is odd and $i_0 \geq 2$. The pcis of $\mathbb{F}_qG_{2^{n+1}}$ are as in \Cref{tab4:my_label}.
		
		\begin{small}

			\begin{table}

				\begin{tabular}{|ccl|}
					
					\hline 
					
					\hline
					&&	if ~$q= \pm 1 +2^{i_0}c$ with $c$ odd. \\
					
					$e_1$ &$:=$&$e_C(G_{2^{n+1}}, G_{2^{n+1}}, G_{2^{n+1}}), C\in \mathcal{R}(G_{2^{n+1}}/G_{2^{n+1}}) $\\
					
					&$=$ &$\widehat{G_{2^{n+1}}}$   \\&&\\

					$e_{2}$ &$:= $ & $e_C(G_{2^{n+1}},G_{2^{n+1}}, \langle a\rangle),C\in \mathcal{R}(G_{2^{n+1}}/ \langle a\rangle)$ \\

					&$=$&$\widehat{\langle a \rangle}-\widehat{G_{2^{n+1}}}$
					\\&&\\

					$e_{3}$ &$:= $ & $e_C(G_{2^{n+1}},G_{2^{n+1}}, \langle a^2,b\rangle),C\in \mathcal{R}(G_{2^{n+1}}/ \langle a^2,b\rangle)$\\
					
					&$=$&$\widehat{\langle a^2,b \rangle }-\widehat{G_{2^{n+1}}}$ \\&&\\

					$e_{4}$ &$:= $ & $e_C(G_{2^{n+1}}, G_{2^{n+1}}, \langle a^2,ab\rangle),C\in \mathcal{R}(G_{2^{n+1}}/ \langle a^2,ab\rangle)$\\
					&$=$&$\widehat{\langle a^2,ab \rangle}-\widehat{G_{2^{n+1}}}$ \\

					\hline 
					
					\hline 
					&&	if ~$q= 1+2^{i_0}c$ with $c$ odd. \\

					%	&&\\
					$e_{2^{j},k}$ &$:= $ & $e_C(G_{2^{n+1}}, G_{2^{n+1}}, K),C=C_q(\gamma^k)\in \mathcal{R}(G_{2^{n+1}}/K)$,\\ &&where $K=\langle a^{2^{j}}, b \rangle$ or $\langle a^{2^{{j}-1}}b\rangle,~2\leq j \leq n-1$ \\
					
					&$=$&$\begin{cases}
						\frac{1}{2^{j}} \widehat{K}\sum\limits_{\mathfrak{i}=0}^{2^{j}-1} [\tr(\xi_{2^{j}}^{k\mathfrak{i}})]a^{-\mathfrak{i}},~~~~~~~~~~~~~~~\mathrm{if} ~ 2\leq j \leq i_0\\
						\frac{1}{2^{j}} \widehat{K}\sum\limits_{\mathfrak{i}'=0}^{2^{i_0}-1}[\tr(\xi_{2^{j}}^{k\mathfrak{i}'{2^{j-i_0}}})]a^{-\mathfrak{i}'{2^{j-i_0}}},~\mathrm{otherwise}.
					\end{cases}$
					\\

					&&\\
					
					$e_{2^n,k}$ &$:= $ & $e_C(G_{2^{n+1}},\langle a \rangle,\langle 1 \rangle),C=C_q(\gamma^{k})\in \mathcal{R}(\langle a \rangle/\langle 1 \rangle)$ \\
					%	&  &  \\
				%	\underline{\textbf{$2^{n-1}+1 \in\langle q\rangle $}}&&\\
					&$=$&$\begin{cases}
						\frac{1}{2^{i_0}} \sum\limits_{\mathfrak{i}=0}^{2^{i_0}-1} [\tr(\xi_{2^{i_0}}^{k\mathfrak{i}}) ]a^{-\mathfrak{i}},~~~~~~~~~~~~~\mathrm{if}~n=i_0\\
						\frac{1}{2^{n}}\Sigma_{\mathfrak{i}'=0}^{2^{i_0}-1}[\tr(\xi_{2^{n}}^{{k\mathfrak{i}'2^{n-i_0}}})]a^{-\mathfrak{i}'{2^{n-i_0}}}~\mathrm{if}~n>i_0.
					\end{cases}$
					\\
%					\underline{\textbf{$2^{n-1}+1 \notin\langle q\rangle $}}&&\\
%					&$=$&$\begin{cases}
%						\frac{1}{2^{n}} \sum\limits_{\mathfrak{i}=0}^{2^{n}-1} [\tr(\xi_{2^{n}}^{k\mathfrak{i}}) + \tr(\xi_{2^{n}}^{(2^{n-1}+1)k\mathfrak{i}})]a^{-\mathfrak{i}},~~~~~~~~~~~~~~~~~~~\mathrm{if}~n=i_0\\
%						\frac{1}{2^{n}}\sum\limits_{\mathfrak{i}'=0}^{2^{i_0}-1}[\tr(\xi_{2^{n}}^{{k\mathfrak{i}'2^{n-i_0}}})+\tr(\xi_{2^{n}}^{{(2^{n-1}+1)k\mathfrak{i}'2^{n-i_0}}})]a^{-\mathfrak{i}'{2^{n-i_0}}}~\mathrm{if}~n>i_0.
%					\end{cases}$\\
%					

					\hline
					
					\hline 
					&&	if ~$q= -1+2^{i_0}c$ with $c$ odd. \\

					$e_{2^{j},k}$ &$:= $ & $e_C(G_{2^{n+1}}, G_{2^{n+1}}, K),C=C_q(\gamma^k)\in \mathcal{R}(G_{2^{n+1}}/K)$,\\ &&where $K=\langle a^{2^{j}}, b \rangle$ or $\langle a^{2^{{j}-1}}b\rangle,~2\leq j \leq n-1$ \\

					&$=$&$\begin{cases}
						\frac{1}{2^{j}} \widehat{K}\sum\limits_{\mathfrak{i}=0}^{2^{j}-1} [\tr(\xi_{2^{j}}^{k\mathfrak{i}})]a^{-\mathfrak{i}},~~~~~~~~~~~~~\mathrm{if} ~ 2\leq j \leq i_0, j \neq 2\\
						\widehat{\langle a^2, b \rangle }-	\widehat{\langle a^{4}, b \rangle},~~~~~~~~~~~~~~~~~~~\mathrm{if} ~K=\langle a^{4}, b\rangle \\
						
						\widehat{\langle a^{4}b \rangle }-	\widehat{\langle a^2b \rangle },~~~~~~~~~~~~~~~~~~~~~~\mathrm{if}~ K=\langle a^2b\rangle \\
						\frac{1}{2^{j}} \widehat{K}\sum\limits_{\mathfrak{i}'=0}^{2^{i_0}-1}[\tr(\xi_{2^{j}}^{k\mathfrak{i}'{2^{j-i_0}}})]a^{-\mathfrak{i}'{2^{j-i_0}}},~ \mathfrak{i}'\neq 2^{j-2},~ 3\cdot 2^{j-2}	,~\mathrm{if} ~~j>i_0.
					\end{cases}$
					\\
					&&\\
					$e_{2^n,k}$ &$:= $ & $e_C(G_{2^{n+1}},\langle a \rangle,\langle 1 \rangle),C=C_q(\gamma^{k})\in \mathcal{R}(\langle a \rangle/\langle 1 \rangle)$ \\
					%	&  &  \\
					\underline{\textbf{$2^{n-1}+1 \in\langle q\rangle\mod 2^n $}}&&\\
					&$=$&$\begin{cases}
						\frac{1}{2^{i_0}} \sum\limits_{\mathfrak{i}=0}^{2^{i_0}-1} [\tr(\xi_{2^{i_0}}^{k\mathfrak{i}}) ]a^{-\mathfrak{i}}, ~~~~~~~~~~~~~~~~~~~~~~~~~~~~~~~~~~~~~~~~~~\mathrm{if}~n=i_0\\
						\frac{1}{2^{n}}\sum\limits_{\mathfrak{i}'=0}^{2^{i_0}-1}[\tr(\xi_{2^{n}}^{{k\mathfrak{i}'2^{n-i_0}}})]a^{-\mathfrak{i}'{2^{n-i_0}}},~ \mathfrak{i}'\neq 2^{j-2},~ 3\cdot 2^{j-2}~~~~~~~~\mathrm{if}~n>i_0.
					\end{cases}$
					\\
					\underline{\textbf{$2^{n-1}+1 \notin\langle q\rangle \mod 2^n$}}&&\\
					&$=$&$\begin{cases}
						\frac{1}{2^{i_0}} \sum\limits_{\mathfrak{i}=0}^{2^{i_0}-1} [\tr(\xi_{2^{i_0}}^{k\mathfrak{i}}) + \tr(\xi_{2^{i_0}}^{(2^{i_0-1}+1)k\mathfrak{i}})]a^{-\mathfrak{i}}, ~~~~\mathfrak{i}~						 \mathrm{is ~even~~~~~~~if}~n=i_0 
\\
						
						\frac{1}{2^{n}}\sum\limits_{\mathfrak{i}'=0}^{2^{i_0}-1}[\tr(\xi_{2^{n}}^{{k\mathfrak{i}'2^{n-i_0}}})+\tr(\xi_{2^{n}}^{{(2^{n-1}+1)k\mathfrak{i}'2^{n-i_0}}})]a^{-\mathfrak{i}'{2^{n-i_0}}},~~~~\mathrm{if}~n>i_0\\~~~~~~~~~~~~~~~~~~~~~~~~~~~~~~~~~~~~~~~~~~~~~~~~~~~~~~~~~~~~~~~~~~~ \mathfrak{i}'\neq 2^{j-2},~ 3\cdot 2^{j-2}
					\end{cases}$
					\\
					%		&  &  \\

					\hline

					\hline
				\end{tabular}
				
				\caption{Pcis of  $G_{2^{n+1}}$}
				\label{tab4:my_label}
		
				The possible choices of $k$ yield $\tfrac{\varphi(2^j)}{o_{2^j}(q)}$ distinct idempotents for $2 \leq j \leq n$ when $1+2^{n-1} \in \langle q\rangle \mod{2^j}$, and $\tfrac{\varphi(2^j)}{2o_{2^j}(q)}$ choices when $1+2^{n-1} \notin \langle q\rangle \mod{2^j}$.
	\end{table}		\end{small}

	\end{theorem}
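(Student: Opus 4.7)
The plan is to parallel the structure used for $\mathbb{F}_q D_{2^{n+1}}$ in the proof of \Cref{prop_Dihedral}, adapting the trace computations to the new action $b^{-1}ab = a^{1+2^{n-1}}$. First I would apply the algorithm of \cite{BM14,BM16} to list $\mathcal{S}(G_{2^{n+1}})$. Since $[G,G] = \langle a^{2^{n-1}}\rangle$ has order $2$, the abelianisation $G/[G,G] \cong C_{2^{n-1}}\times C_2$ gives rise to three families of strong Shoda pairs: the four pairs with $[G:K]\le 2$ yielding $e_1,\ldots,e_4$; the pairs $(G,\langle a^{2^j},b\rangle)$ and $(G,\langle a^{2^{j-1}}b\rangle)$ for $2\le j\le n-1$; together with the faithful pair $(\langle a\rangle,\langle 1\rangle)$ that accounts for the row $j=n$ of \Cref{tab4:my_label}.

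The four index-at-most-$2$ idempotents follow at once from the definition of $\epsilon_C$. For the cyclic-quotient strong Shoda pairs $(G,K)$ with $[G:K]=2^j$, $2\le j\le n-1$, I would substitute a generator $\gamma^k$ of $\Irr(G/K)$ into the formula $\epsilon_C(G,K)=\tfrac{1}{2^j}\widehat{K}\sum_{\overline h}\tr(\gamma^k(\overline h))\,h^{-1}$ and invoke \Cref{lemma_trace_zero 2} to discard vanishing indices. The cases $q=1+2^{i_0}c$ and $q=-1+2^{i_0}c$ branch exactly as in \Cref{prop_Dihedral}: only characters of order at most $2^{i_0}$ contribute (with the additional vanishing at $i=2$ when $q\equiv -1\pmod 4$), producing the reduced sum indexed by $\mathfrak{i}'2^{j-i_0}$ and the exceptional $\widehat{K}$-difference expressions when $j=2$. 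Because $N_G(K)=G$ and $\mathcal{E}_G(G/K)=G$ for all these $K$, the orbit set $\mathcal{R}$ coincides with the coset set $\mathcal{C}$, so no orbit merging occurs.

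The genuinely new input is the faithful pair $(\langle a\rangle,\langle 1\rangle)$. Since $b^{-1}ab = a^{1+2^{n-1}}$, the induced $G$-action on $\Irr(\langle a\rangle)$ sends a primitive character $\gamma^k$ to $\gamma^{k(1+2^{n-1})}$; consequently $\mathcal{R}(\langle a\rangle/1)=\mathcal{C}(\langle a\rangle/1)$ precisely when $1+2^{n-1}\in\langle q\rangle\pmod{2^n}$. In that case (which in particular holds whenever $q\equiv 1\pmod 4$), the cyclotomic coset is already $G$-stable and the pci equals $\epsilon_C(\langle a\rangle,1)$; applying \Cref{lemma_trace_zero 2} separately to the subcases $n=i_0$ and $n>i_0$ produces the formulas displayed in the table. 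Otherwise, the $G$-orbit of $C_q(\gamma^k)$ contains two distinct cosets $C_q(\gamma^k)$ and $C_q(\gamma^{k(1+2^{n-1})})$, so the pci is $\epsilon_C+\epsilon_{C^b}$; evaluating both trace sums produces the combined $\tr(\xi_{2^n}^{k\mathfrak{i}'2^{n-i_0}})+\tr(\xi_{2^n}^{(2^{n-1}+1)k\mathfrak{i}'2^{n-i_0}})$ expression in the last rows of \Cref{tab4:my_label}, and the orbit count changes from $\phi(2^n)/o_{2^n}(q)$ to $\phi(2^n)/(2o_{2^n}(q))$.

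The main obstacle is careful bookkeeping of vanishing indices in each branch, most delicately at $j=2$ under $q\equiv -1\pmod 4$: here $o_4(q)=2$ forces $\tr(\xi_4)=0$, so one must express the idempotent directly as $\widehat{\langle a^4,b\rangle}-\widehat{\langle a^2,b\rangle}$ (respectively $\widehat{\langle a^4 b\rangle}-\widehat{\langle a^2 b\rangle}$) rather than via the generic trace formula. Analogous care is needed to exclude the vanishing indices $\mathfrak{i}=2^{j-2},\,3\cdot 2^{j-2}$ from the summations in the higher rows. Once these vanishing patterns are tracked consistently and one checks that the two possible orbit counts are as claimed, assembling the entries into \Cref{tab4:my_label} becomes a routine verification.
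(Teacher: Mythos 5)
Your proposal is correct and follows essentially the same route as the paper: the same list $\mathcal{S}(G_{2^{n+1}})$, the generic $\epsilon_C$ formula with \Cref{lemma_trace_zero 2} governing which trace terms survive (including the exceptional $j=2$ case when $q\equiv-1\pmod 4$), and the same dichotomy on whether $1+2^{n-1}\in\langle q\rangle \bmod 2^n$ for the faithful pair $(\langle a\rangle,\langle 1\rangle)$, which decides whether the pci is $\epsilon_C$ or the sum of its two $G$-conjugates with the halved orbit count. The only cosmetic difference is that the paper dispatches the $(G,K)$-type pairs by appeal to \Cref{parmameters $G=H$} and the dihedral computation rather than rewriting them, and your parenthetical claim that $1+2^{n-1}\in\langle q\rangle$ whenever $q\equiv1\pmod4$ carries the same implicit restriction (namely $o_{2^n}(q)>1$) as the corresponding assertion in the paper's proof.
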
	
	\begin{proof}%\ChSugandha{[this proof is undiscussed yet.]}
			The complete list of strong Shoda pairs of $G:=G_{2^{n+1}}$ is given by

				\[
			\mathcal{S}(G) =
			\big\{
			(G, K)  : K\in \{G, \langle a \rangle,\langle a^2, b \rangle,\langle a^2, ab \rangle,  \langle a^{2^j}, b \rangle , \langle a^{2^{j-1}}b \rangle\ | \ \ 2 \leq j \leq n-1\}	\}\cup	\{(\langle a \rangle, \langle 1 \rangle) 
			\big\}.
			\]

All the strong Shoda pairs of  \(G_{2^{n+1}}\), except $(\langle a \rangle, \langle 1 \rangle) $, are of the type 
\((G_{2^{n+1}}, K)\) with \(K\) a proper subgroup of \(G_{2^{n+1}}\). Therefore, in view of \Cref{parmameters $G=H$}
				we consider the idempotent corresponding to  
		$(\langle a \rangle, \langle 1 \rangle)$ only.

		If $2^{\,n-1}+1 \in \langle q \rangle \mod{2^n}$,  then  $e_C(G_{2^{n+1}},\langle a \rangle,\langle 1 \rangle)=\epsilon_C(\langle a \rangle,\langle 1 \rangle)$ 
		and the expression is obtained directly. On the other hand, if $2^{\,n-1}+1 \notin \langle q \rangle \mod{2^n}$, 
		then by the same observation on order $2$ elements in $U(2^n)$ as done  in the proof of 
		\Cref{$F_qD_{2^{n+1}}$ Isomorphic $F_qD_{2^{n+1}}^-$}, 
		this situation occurs precisely when 
		\[
		q \equiv -1 \mod{2^{\,n-1}}
		\quad \text{and} \quad
		q \equiv -1 \mod{4}.
		\]
%		On the other hand, if $2^{\,n-1}+1 \notin \langle q \rangle \mod{2^n}$, 
%		then this case arises only when 
%		$$
%		q \equiv -1 \mod{2^{\,n-1}} 
%		\quad \text{and} \quad 
%		q \equiv -1 \mod{3}.
%		$$ because  
%			 $U(2^n) \cong C_2 \times C_{2^{n-2}} \cong \langle -1 \rangle \times \langle 5 \rangle$, there are only three elements of order $2$, namely $-1$, $-1+2^{n-1}$, and $1-2^{n-1}$.
%		
%		\smallskip
%		\noindent\textbf{Case (i):} Suppose $q \equiv 1 \mod{4}$.  
%		Then $\langle q \rangle = \langle 5^k \rangle$ for some $k$ and arguing as in proof of \Cref{$F_qD_{2^{n+1}}$ Isomorphic $F_qD_{2^{n+1}}^-$} we obtain that neither of $-1$ nor $-1+2^{n-1} \in \langle q \rangle.$ So  $1+2^{n-1}\in \langle q \rangle$. 
%			\smallskip
%		\noindent\textbf{Case (ii):} Suppose $q \equiv -1 \pmod{4}$.  
%		Claim  holds if and only if $q \not \equiv -1 \pmod{2^{n-1}}$.  
%		
%		First, assume $q \equiv -1 \mod{2^{n-1}}$.  
%		Then $q^2 \equiv 1 \pmod{2^n}$, so $\langle q \rangle$ has only one element of order $2$. If this element is either $-1$ or $-1+2^{n-1}$,  otherwise we must have the order $2$ element to be $q=1-2^{n-1}$ which  contradictions the assumption  $q \equiv -1 \mod{2^{n-1}}$.    
%		
%		
%		Conversely, if $q \not\equiv -1 \pmod{2^{n-1}}$, then by \Cref{lemma_trace_zero 2} we have $o_{2^n}(q) > 2$.  
%		Thus $\langle q \rangle$ contains at least two elements of order $2$, which forces the third element of order $2$.
%		

By  \Cref{lemma_trace_zero 2}, we obtain that
 for $n=i_0$,  $e_{{2^n}, k}=	\frac{1}{2^{n}} \sum\limits_{\mathfrak{i}=0}^{2^{n}-1} [\tr(\xi_{2^{n}}^{k\mathfrak{i}}) + \tr(\xi_{2^{n}}^{(1+2^{n-1})k\mathfrak{i}})]a^{-\mathfrak{i}} $, where
 $\tr(\xi_{2^{n}}^{k\mathfrak{i}}) + \tr(\xi_{2^{n}}^{(1+2^{n-1})k\mathfrak{i}}) =\tr(\xi_{2^{n}}^{k\mathfrak{i}} + \xi_{2^{n}}^{(2^{n-1}+1)k\mathfrak{i}}) \neq 0$ if and only if $\mathfrak{i}$ is even. 
  And, for $n > i_0$, 
 $e_{{2^n}, k}=	\frac{1}{2^{n}}\sum\limits_{\mathfrak{i}'=0}^{2^{i_0}-1}[\tr(\xi_{2^{n}}^{{k\mathfrak{i}'2^{n-i_0}}})+\tr(\xi_{2^{n}}^{{(1+2^{n-1})k\mathfrak{i}'2^{n-i_0}}})]a^{-\mathfrak{i}'{2^{n-i_0}}},~~ \mathfrak{i}'\neq 2^{j-2},~ 3\cdot 2^{j-2}$ with $\tr(\xi_{2^{n}}^{{k\mathfrak{i}'2^{n-i_0}}}+\xi_{2^{n}}^{{(1+2^{n-1})k\mathfrak{i}'2^{n-i_0}}})\neq 0.$

	\end{proof}
\begin{remark}From $\mathcal{S}(G_{2^{n+1}})$ given in above proof it follows that
\begin{equation}\label{weddG_{2^{n+1}}}
	\mathbb{F}_q G_{2^{n+1}} \cong
	\begin{cases}	4\mathbb{F}_q ~~ \bigoplus	\limits_{j=2}^{n-1} 
		
		2\frac{\phi(2^j)}{o_{2^j}(q)} 
		\mathbb{F}_{q^{\,o_{2^j}(q)}} ~  \bigoplus~
		\frac{\phi(2^n)}{o_{2^n}(q)} 
		M_2\!\left(\mathbb{F}_{q^{\,o_{2^n}(q)/2}}\right),
		~ \text{if $1+2^{n-1} \in \langle q \rangle ~\rm {mod} ~2^n$},
		\\
		4\mathbb{F}_q ~~ \bigoplus	\limits_{j=2}^{n-1} 
		
		2\frac{\phi(2^j)}{o_{2^j}(q)} 
		\mathbb{F}_{q^{\,o_{2^j}(q)}} ~ \bigoplus~
		\frac{\phi(2^n)}{2o_{2^n}(q)} \,
		M_2\!\left(\mathbb{F}_{q^{\,o_{2^n}(q)}}\right),
		~  \text{otherwise}.
	\end{cases}
\end{equation}
\end{remark}
%	Since all the above strong Shoda pair of $G_{2^{n+1}}$ is of the form $(G, K)$ type except the one which is of the form $(\langle a \rangle,  \langle 1 \rangle)$, so we need to give parameters only for the pcis corresponding to strong Shoda pair $(\langle a \rangle, \langle 1 \rangle),$ namely $e_{2^n,k} .$\\

	\begin{corollary}\label{Cor_distance_{M_{2^{n+1}}}}In the foregoing notation, 
			\begin{enumerate}
			\item $\dim_{\mathbb{F}_q}(\mathbb{F}_qG_{2^{n+1}}e_{2^{n},k})=\begin{cases}
				2o_{2^{n}}(q),~\mathrm{if} ~1+2^{n-1} \in \langle q\rangle, \\
				4o_{2^{n}}(q),~\mathrm{if} ~ 1+2^{n-1}\not  \in \langle q\rangle.
			\end{cases}$
			\item 	An $\mathbb{F}_q$-basis of $\mathbb{F}_qG_{2^{n+1}}e_{2^n,k}$ is given by $\mathcal{B}$, where \\
			
			$\mathcal{B}=\begin{cases}
				\{a^{\eta_a}b^{\eta_b}e_{2^{n},k}~|~0\leq \eta_a< o_{2^n}(q),~0\leq \eta_b \leq 1\},~~~~ \mathrm{if}~ 1+2^{n-1}\in \langle q\rangle ,\\
				\{a^{\eta_a}b^{\eta_b}\epsilon_C(\langle a \rangle,\langle a^{2^{n}} \rangle)^t~|~t\in \{1,b\},~0\leq \eta_a< o_{2^n}(q),~0\leq \eta_b \leq 1\}, ~~~\mathrm{if}~ 1+2^{n-1}\not \in \langle q\rangle.
				
			\end{cases}$
			
			%$	4o_{p^{j}}(q),~ -1\not  \in \langle q\rangle $, and
			\item $d:=d(\mathbb{F}_qG_{2^{n+1}}e_{2 ^{n },k})$ satisfies the following:
			\begin{itemize}
				\item[(i)] if $q=1+2^{i_0}c$, where $c$ is odd and $i_0 \geq 2$, then $2 \leq d \leq 2 ^{ i_0}$,
%				$$\begin{cases}
%					2 \leq d \leq  2^n,~~~~~~~\mathrm{if}~ n= i_0,\\
%					2 \leq d \leq 2 ^{ i_0},~\mathrm{otherwise}.
%				\end{cases}$$
				\item[(ii)]if $q=-1+2^{i_0}c$, where $c$ is odd and $i_0 \geq 2$, then  $d$ satisfies the following:\\
				
				\underline{\textbf{$2^{n-1}+1 \in\langle q\rangle \mod 2^n$}}~~~~~~	$\begin{cases}	
						\begin{aligned}
						&2 \leq d \leq 2^{i_0}, && \mathrm{if}~ n=i_0>2,\\
						&d=2, &&  \mathrm{if}~ n=2,\\
						&2 \leq d \leq (2^{i_0}-2), &&  \mathrm{if}~ i_0<n
					\end{aligned}

				\end{cases}$
			
			\vspace{0.5cm}

					\underline{\textbf{$2^{n-1}+1 \notin\langle q\rangle \mod 2^n$}}~~~~~~	$\begin{cases}	
					\begin{aligned}
						&2 \leq d \leq 2^{i_0-1}, &&  \mathrm{if}~ n=i_0>2,\\
						
						&2 \leq d \leq (2^{i_0}-2), &&  \mathrm{if}~ i_0<n.
					\end{aligned}

				\end{cases}$
				
%				\[
%				d =
%				\begin{cases}
%					\begin{aligned}
%						&2 \leq d \leq 2^{i_0}, && \text{if } n=i_0>2,\\
%						&d=2, && \text{if } n=2,\\
%						&2 \leq d \leq (2^{i_0}-2), && \text{if } i_0<n
%					\end{aligned}
%						& \quad \text{if } 1+2^{\,n-1}\in \langle q \rangle \pmod{2^n}, \\[1cm]
%					
%					\begin{aligned}
%						&2 \leq d \leq 2^{i_0-1}, && \text{if } n=i_0>2,\\
%						&2 \leq d \leq (2^{i_0}-2), && \text{if } i_0<n
%					\end{aligned}
%					& \quad \text{if } 1+2^{\,n-1}\notin \langle q \rangle \pmod{2^n}.
%				\end{cases}
%				\]
					\end{itemize}
			
		\end{enumerate}   	
	\end{corollary}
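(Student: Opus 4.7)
The plan is to derive all three parts from the Wedderburn decomposition in~(\ref{weddG_{2^{n+1}}}), together with a coset decomposition of $\mathbb{F}_q G_{2^{n+1}}$ along the cyclic subgroup $\langle a\rangle$ and a direct inspection of the idempotent expressions in Theorem~\ref{prop_G_{2^{n+1}}}.

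For (1), I will read off the simple component corresponding to the strong Shoda pair $(\langle a\rangle,\langle 1\rangle)$ from~(\ref{weddG_{2^{n+1}}}): it is $M_2(\mathbb{F}_{q^{o/2}})$ when $1+2^{n-1}\in\langle q\rangle\mod 2^n$ and $M_2(\mathbb{F}_{q^{o}})$ otherwise, where $o:=o_{2^n}(q)$; taking $\mathbb{F}_q$-dimensions gives $2o$ and $4o$, respectively. For (2), I will exploit $G_{2^{n+1}} = \langle a\rangle \sqcup b\langle a\rangle$ to write $\mathbb{F}_q G_{2^{n+1}} e = \mathbb{F}_q\langle a\rangle e + b\,\mathbb{F}_q\langle a\rangle e$. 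In the subcase $1+2^{n-1}\in\langle q\rangle\mod 2^n$, the pci $e_{2^n,k}$ coincides with $\epsilon_C(\langle a\rangle,\langle 1\rangle)$, which is central in $\mathbb{F}_q G_{2^{n+1}}$, and the cyclic-group ideal $\mathbb{F}_q\langle a\rangle e \cong \mathbb{F}_{q^o}$ has the $\mathbb{F}_q$-basis $\{a^{\eta_a}e : 0\le \eta_a < o\}$ by the minimal-polynomial argument of Theorem~\ref{parmameters $G=H$}; a dimension comparison against (1) will force the above sum to be direct and produce the claimed basis. In the other subcase, $e = \epsilon_C + \epsilon_{C'}$ with $C'$ the $b$-conjugate of $C$, and the relations $\epsilon_C\epsilon_{C'}=0$ and $e\epsilon_C=\epsilon_C$ yield $\mathbb{F}_q G_{2^{n+1}} e = \mathbb{F}_q G_{2^{n+1}}\epsilon_C \oplus \mathbb{F}_q G_{2^{n+1}}\epsilon_{C'}$; applying the previous argument to each summand, and using $\epsilon_C^b = \epsilon_{C'}$, will furnish the basis of size $4o$.

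For (3), the lower bound $d\geq 2$ comes from the standard unit argument: a weight-one element $\alpha g\in \mathbb{F}_q G_{2^{n+1}}e$ would give $\mathbb{F}_q G_{2^{n+1}} = \mathbb{F}_q G_{2^{n+1}}\cdot(\alpha g) \subseteq \mathbb{F}_q G_{2^{n+1}} e$, forcing $e=1$ and contradicting the fact that $\mathbb{F}_q G_{2^{n+1}} e$ is a proper Wedderburn component. For the upper bounds, I will count $\mathrm{wt}(e_{2^n,k})$ directly from Theorem~\ref{prop_G_{2^{n+1}}}. In case~(i) the index of summation ranges over $2^{i_0}$ values, so $d\le 2^{i_0}$. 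In case~(ii) with $2^{n-1}+1 \in \langle q\rangle\mod 2^n$ I use Lemma~\ref{lemma_trace_zero 2}(2) to identify the vanishing traces: for $n=i_0>2$ no further cancellations occur beyond the nominal $2^{i_0}$ terms; for $n=2$ the traces at the two odd indices vanish (via the $i=2$ exception), leaving $\mathrm{wt}(e)=2$ and hence $d=2$; for $i_0<n$ the two traces at the order-$4$ roots vanish, yielding $\mathrm{wt}(e)\le 2^{i_0}-2$. In the subcase $2^{n-1}+1 \notin \langle q\rangle\mod 2^n$ the orbit sum of $\xi^{k\mathfrak{i}'}$ with $\xi^{(1+2^{n-1})k\mathfrak{i}'}$ imposes an additional parity restriction on the surviving indices, producing $d\le 2^{i_0-1}$ when $n=i_0>2$ and $d\le 2^{i_0}-2$ when $i_0<n$.

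The main obstacle will be the case-by-case bookkeeping of vanishing traces via Lemma~\ref{lemma_trace_zero 2} and reconciling the resulting support counts with the precise idempotent expressions from Theorem~\ref{prop_G_{2^{n+1}}}, so that the four distinct upper bounds in part~(3) all come out with the correct exponents.
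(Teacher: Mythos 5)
Your proposal is correct and is essentially the argument the paper intends (the corollary is stated there as an immediate consequence of \Cref{prop_G_{2^{n+1}}} and the Wedderburn decomposition in the subsequent remark): the dimension is read off the simple component $M_2(\mathbb{F}_{q^{o_{2^n}(q)/2}})$ resp.\ $M_2(\mathbb{F}_{q^{o_{2^n}(q)}})$, the basis comes from the decomposition $\mathbb{F}_qG_{2^{n+1}}e=\mathbb{F}_q\langle a\rangle e+b\,\mathbb{F}_q\langle a\rangle e$ (split further through $\epsilon_C,\epsilon_{C'}$ when $1+2^{n-1}\notin\langle q\rangle$) combined with the linear-independence argument of \Cref{parmameters $G=H$}, and the distance bounds follow from $d\ge 2$ together with the support counts of $e_{2^n,k}$ in \Cref{tab4:my_label}. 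One cosmetic point: in the subcase $q\equiv -1+2^{i_0}c$, $2^{n-1}+1\in\langle q\rangle \bmod 2^n$, $n=i_0>2$, your claim that ``no further cancellations occur'' is not needed (that subcase is in fact vacuous, since then $\langle q\rangle=\{\pm 1\}\bmod 2^n$), and in any event the stated bound $d\le 2^{i_0}$ already follows from the $2^{i_0}$ summation indices.
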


	\begin{corollary}\label{left pcis F_qG_{2^{n+1}}}
		The semisimple group algebra $\mathbb{F}_qG_{2^{n+1}}$ decomposes into minimal left ideals generated by a complete set of primitive orthogonal idempotents, given by:
		\begin{itemize}
			\item[(i)]$4 \;+\; 2 \sum\limits_{j=2}^{n-1}\frac{\phi(2^j)}{o_{2^j}(q)}$ central  idempotents corresponding to strong Shoda pairs of type $(G_{2^{n+1}},K),$ as listed in \Cref{tab4:my_label}.
			\item[(ii)] $2 \dfrac{\phi(2^n)}{\kappa o_{2^n}(q)}$ left  idempotents: $e_{2^n,k}\widehat{\langle b \rangle}$ and $e_{2^n,k}(1-\widehat{\langle b \rangle})$,	where $\kappa=2$ for $1+2^{n-1} \notin \langle q \rangle \mod 2^n$ and $\kappa=1$ in all other cases.
	\end{itemize}
	\end{corollary}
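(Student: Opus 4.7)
The plan is to combine the Wedderburn decomposition of $\mathbb{F}_qG_{2^{n+1}}$ recorded in~\eqref{weddG_{2^{n+1}}} with a refinement of the unique non-commutative simple component via the idempotent $\widehat{\langle b\rangle}$. Since a finite field contains no non-trivial idempotent, every pci whose simple component is a field is automatically a primitive idempotent of $\mathbb{F}_qG_{2^{n+1}}$, so no further splitting is needed there. Inspecting~\eqref{weddG_{2^{n+1}}} and \Cref{tab4:my_label}, all simple components are fields except those arising from the strong Shoda pair $(\langle a\rangle,\langle 1\rangle)$, which are $2\times 2$ matrix rings. Thus item~(i) is obtained by enumerating the pcis coming from pairs of type $(G_{2^{n+1}},K)$: the count $4+2\sum_{j=2}^{n-1}\phi(2^j)/o_{2^j}(q)$ matches exactly the number of pcis listed in the first three blocks of \Cref{tab4:my_label}.

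For item~(ii), I would proceed as in \Cref{left pcis F_qD_{2^{n+1}}} by invoking the analogue of~\cite[Proposition~2.5]{APM19}. Because $b^2=1$, the element $\widehat{\langle b\rangle}=\tfrac{1}{2}(1+b)$ is an idempotent, and for each pci $e_{2^n,k}$ the elements $e_{2^n,k}\widehat{\langle b\rangle}$ and $e_{2^n,k}(1-\widehat{\langle b\rangle})$ are orthogonal and sum to $e_{2^n,k}$. To see that they are in fact primitive, I would argue by dimension count: since $\mathbb{F}_qG_{2^{n+1}}e_{2^n,k}\cong M_2(F)$ for the field $F$ appearing in~\eqref{weddG_{2^{n+1}}}, every minimal left ideal has $F$-dimension $2$, i.e.\ $\mathbb{F}_q$-dimension equal to half of $\dim_{\mathbb{F}_q}\mathbb{F}_qG_{2^{n+1}}e_{2^n,k}$, a value already computed in \Cref{Cor_distance_{M_{2^{n+1}}}}(1). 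Checking that $\mathbb{F}_qG_{2^{n+1}}e_{2^n,k}\widehat{\langle b\rangle}$ attains precisely this dimension forces both summands to be primitive.

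Counting then yields the asserted total: for each of the $\phi(2^n)/o_{2^n}(q)$ pcis $e_{2^n,k}$ when $1+2^{n-1}\in\langle q\rangle \bmod 2^n$, or the $\phi(2^n)/(2o_{2^n}(q))$ pcis otherwise, one recovers two primitive left idempotents, giving $2\phi(2^n)/(\kappa o_{2^n}(q))$ in total with $\kappa$ as in the statement. The main obstacle I anticipate is ensuring that neither of $e_{2^n,k}\widehat{\langle b\rangle}$ nor $e_{2^n,k}(1-\widehat{\langle b\rangle})$ vanishes, equivalently, that $\widehat{\langle b\rangle}$ corresponds to a rank-one idempotent in the matrix image $M_2(F)$. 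This non-degeneracy is precisely what the cited~\cite[Proposition~2.5]{APM19} guarantees for split metacyclic group algebras, and its applicability here rests on the split structure $G_{2^{n+1}}=\langle a\rangle\rtimes\langle b\rangle$.
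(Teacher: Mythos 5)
Your proposal is correct and takes essentially the same route the paper intends: item (i) is read off from \Cref{tab4:my_label} together with the Wedderburn decomposition \eqref{weddG_{2^{n+1}}} (field components need no splitting), and the only matrix components, those from $(\langle a\rangle,\langle 1\rangle)$, are split by $\widehat{\langle b \rangle}$ into $e_{2^n,k}\widehat{\langle b \rangle}$ and $e_{2^n,k}(1-\widehat{\langle b \rangle})$, with the key non-degeneracy supplied by \cite[Proposition~2.5]{APM19}, exactly as the paper invokes it for \Cref{left pcis F_qD_{2^{n+1}}}. Your rank/dimension-count justification of primitivity is a harmless elaboration of the same argument.
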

	A complete classification of metacyclic 2-groups is available in \cite{XZ06} and hence  one can similarly do the computations for any metacyclic $2$-groups.

	\begin{remark}
		In \cite{CM25}, we determined the pcis of 
		$\mathbb{F}_q G$ for split metacyclic groups of order $p_1^m p_2^l$, 
		assuming $p_1$ and $p_2$ to be distinct odd primes. In view of results in this section, the case when $p_1=p_2=2$ is handled. Moreover, 
		using \Cref{lemma_trace_zero 2} and following the same reasoning as in the proofs of 
	\Cref{prop_Dihedral} and \Cref{prop_G_{2^{n+1}}},  the result of  \cite[Theorem~3.3]{CM25} 
	also extends to the case when one of the primes is $2$.
	\end{remark}

In the next section, we further extend our analysis to the remaining case when $p_1$ and $p_2$ are not  distinct but odd primes, that is, we consider metacyclic $p$-groups, where $p$ is an odd prime.

	\section{Metacyclic $p$-group codes, $p\neq 2$}
%\ChSugandha{	[This section is to discussed afresh]} 
Cyclic and abelian $p$-group codes have been largely studied. For instance, the results of Arora and Pruthi \cite{PMA97} on cyclic $p$ codes were extended by Ferraz and Polcino Milies \cite{FRP07} to the results on abelian $p$-group codes.  In this section, we take a step further and study non-abelian metacyclic $p$-group codes, where $p$ is an odd prime.

Following Section 3, we first study metacyclic $p$-group codes for the groups with maximal cyclic subgroup. This is followed by assessing some arbitrary $p$-group codes.
\subsection{Metacyclic $p$-group codes having maximal cyclic subgroup, $p \neq 2$}
Up to isomorphism, there is a unique non-abelian metacyclic group of order $p^{n+1}$, where $p$ is an odd prime and $n \geq 2$, which possesses a maximal cyclic subgroup (c.f. (\cite{Huppert1}, I, Satz 14.9(a))). For this group, we provide a complete list of idempotents using strong Shoda pairs in the next theorem. 
\begin{theorem}\label{pcis_$p^n$}
For $G_{p^{n+1}}= \langle a, b \mid a^{p^{\,n}} = 1,\; b^p = 1,\; b^{-1}ab= a^{\,p^{\,n-1}+1} \rangle$, where $p$ is an odd prime and $n \geq 2$, the strong Shoda pairs of $G_{p^{n+1}}$ along with respective pcis of semisimple group algebra $\mathbb{F}_qG_{p^{n+1}}$ are as listed in \Cref{tab:pcis_G_pn}.
\begin{table}[ht]
	\centering
	\small
	\begin{tabular}{|p{4.2cm}|p{10.8cm}|}
		\hline
		\textbf{Strong Shoda pairs} & \textbf{Primitive central idempotents in $\mathbb{F}_q G_{p^{n+1}}$} \\
		\hline
		
		$(G_{p^{n+1}}, G_{p^{n+1}})$ & 
		$e_0 = \widehat{G_{p^{n+1}}}$ \\
		\hline
		
		$(G_{p^{n+1}}, \langle a \rangle)$ & 
		$e_{1,k} = \tfrac{1}{p}\,\widehat{\langle a \rangle}\;
		\sum\limits_{i=0}^{p-1} \tr(\xi_p^{\,ki})\,b^{-i}$ \\
		\hline
		
		$(G_{p^{n+1}}, \langle a^{p^j}, a^{ip^{j-1}} b \rangle)$, 
		
		$0 \leq i \leq p-1$, $1 \leq j \leq n-1$ 
		& 
		$e_{(i,j),k} =
		\begin{cases}
			\frac{1}{p^j}\,\widehat{\langle a^{p^j}, a^{ip^{j-1}} b \rangle}\;
			\sum\limits_{r=0}^{p^j-1} \tr(\xi_{p^j}^{\,kr})\,a^{-r}, & 1 \leq j \leq i_0, \\[6pt]
			\frac{1}{p^j}\,\widehat{\langle a^{p^j}, a^{ip^{j-1}} b \rangle}\;
			\sum\limits_{r=0}^{p^{i_0}-1} \tr(\xi_{p^j}^{\,kr})\,a^{-rp^{\,j-i_0}}, & j > i_0,
		\end{cases}$ \\
		\hline
		
		$(\langle a \rangle, \langle a^{p^n}\rangle)$ 
		& 
		$e_{p^n,k} =
		\begin{cases}
			\frac{1}{p^{i_0}} \sum\limits_{t \in T}\;\sum\limits_{r=0}^{p^{i_0}-1} 
			\tr(\xi_{p^{i_0}}^{\,krt})\,a^{-r}, & n = i_0, \\[6pt]
			\frac{1}{p^{\,n}} \sum\limits_{t \in T}\;\sum\limits_{r=0}^{p^{i_0}-1} 
			\tr(\xi_{p^{\,n}}^{\,kr t})\,a^{-rp^{\,n-i_0}}, & {\,n} > i_0,
		\end{cases}$ \\[6pt]
		& where $T$ is a transversal of $\langle (1+p^{n-1})^{\omega_0}\rangle$ in 
		$\langle 1+p^{n-1}\rangle$, with $\omega_0$ being the least integer such that 
		$(1+p^{n-1})^{\omega_0} \in \langle q \rangle$. \\
		\hline
		
	\end{tabular}
	\caption{Pcis of $\mathbb{F}_q G_{p^{n+1}}$}
	\label{tab:pcis_G_pn}
	\normalsize

\end{table}
\end{theorem}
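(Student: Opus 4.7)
The plan is to proceed in close parallel with the treatment of $G_{2^{n+1}}$ in \Cref{prop_G_{2^{n+1}}}, now adapting the arguments to the odd prime case. First I would determine $\mathcal{S}(G_{p^{n+1}})$ via the algorithm of \cite{BM14}. Since the derived subgroup is $[G,G] = \langle a^{p^{n-1}}\rangle$ of order $p$, and $\langle a\rangle$ is the unique maximal abelian subgroup of index $p$ (automatically normal), the normal subgroups of $G_{p^{n+1}}$ are straightforward to enumerate. A direct check using $b^{-1}ab = a^{1+p^{n-1}}$ together with the centrality of $a^{p}$ shows that the normal subgroups $K$ with $G/K$ cyclic are exactly $G$, $\langle a\rangle$, and the family $K_{i,j} := \langle a^{p^j}, a^{ip^{j-1}}b\rangle$ for $0\leq i\leq p-1$ and $1\leq j\leq n-1$. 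Together with $(\langle a\rangle,\langle 1\rangle)$, for which $\langle a\rangle$ is a maximal abelian subgroup of $N_G(\langle 1\rangle)=G$, these exhaust $\mathcal{S}(G_{p^{n+1}})$.

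For pairs $(G,K)$ with $H=G$, I would read the pci off $\epsilon_C(G,K)$ via the defining formula. The cases $K=G$ and $K=\langle a\rangle$ are immediate. For $K=K_{i,j}$, the quotient $G/K$ is cyclic of order $p^j$ generated by $\overline{a}$, and taking $\gamma$ a generator of $\Irr(G/K)$ yields the unreduced expression $\frac{1}{p^j}\widehat{K_{i,j}}\sum_{r=0}^{p^j-1}\tr(\xi_{p^j}^{kr})a^{-r}$. The reduction to the sum indexed by $r'\in\{0,\ldots,p^{i_0}-1\}$ when $j>i_0$ invokes the odd-prime analogue of \Cref{lemma_trace_zero 2} (namely \cite[Lemma~3.1]{CM25}): writing $\xi_{p^j}^{kr}$ as a primitive $p^i$-th root of unity with $p^i=p^j/\gcd(kr,p^j)$, its $\mathbb{F}_q$-trace vanishes precisely when $i>i_0$, that is, when $p^{j-i_0}\nmid r$. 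Keeping only the surviving multiples $r=r'p^{j-i_0}$ produces the second line of \Cref{tab:pcis_G_pn}.

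The delicate case is the pair $(\langle a\rangle,\langle 1\rangle)$, where $H=\langle a\rangle$ is proper in $G$ and the pci is a sum of the $G$-conjugates of $\epsilon_C(\langle a\rangle,\langle 1\rangle)$. The $G$-action on $\mathcal{C}(\langle a\rangle/\langle 1\rangle)$ factors through $G/\langle a\rangle\cong\langle\overline{b}\rangle$: conjugation by $b$ sends the character $\gamma^k$ of order $p^n$ to $\gamma^{k(1+p^{n-1})}$. Hence the stabilizer $\mathcal{E}_G(\langle a\rangle/\langle 1\rangle)$ is determined by the smallest $\omega_0$ with $(1+p^{n-1})^{\omega_0}\in\langle q\rangle\pmod{p^n}$; as $\langle 1+p^{n-1}\rangle$ has prime order $p$, $\omega_0\in\{1,p\}$. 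Letting $T$ be a transversal of $\langle(1+p^{n-1})^{\omega_0}\rangle$ in $\langle 1+p^{n-1}\rangle$ and summing over the orbit gives
\[
e_{p^n,k}\;=\;\frac{1}{p^n}\sum_{t\in T}\sum_{r=0}^{p^n-1}\tr(\xi_{p^n}^{krt})\,a^{-r},
\]
and the same trace-vanishing reduction ($r=r'p^{n-i_0}$ for $n>i_0$, no reduction for $n=i_0$) produces the final row of the table.

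The main obstacle I anticipate is the correct identification of the stabilizer $\mathcal{E}_G(\langle a\rangle/\langle 1\rangle)$ and the resulting transversal $T$, as this step produces the two subcases of the last row of \Cref{tab:pcis_G_pn}. One must also verify that the trace-vanishing criterion applies uniformly in $t\in T$; this is immediate because each $t\in\langle 1+p^{n-1}\rangle$ is coprime to $p$, so multiplication by $t$ permutes the primitive $p^i$-th roots of unity for every $i$ and preserves which terms vanish. The remaining calculations are routine termwise verifications that closely mirror the arguments for $G_{2^{n+1}}$ and for the groups of order $p_1^m p_2^l$ treated in \cite{CM25}.
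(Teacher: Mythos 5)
Your proposal is correct and follows essentially the route the paper itself (implicitly) takes: enumerate $\mathcal{S}(G_{p^{n+1}})$ via the normal subgroups with cyclic quotient plus the pair $(\langle a\rangle,\langle 1\rangle)$, read off $\epsilon_C$ from the defining formula, reduce the support with the odd-prime trace-vanishing lemma of \cite{CM25}, and treat $(\langle a\rangle,\langle 1\rangle)$ by computing the $\langle b\rangle$-orbit and the stabilizer through $\omega_0$, exactly as in the proof of \Cref{prop_G_{2^{n+1}}}. One small inaccuracy worth fixing: $\langle a\rangle$ is not the \emph{unique} maximal abelian subgroup of index $p$ (each $\langle a^{p}, a^{i}b\rangle$ is also abelian of index $p$), but this remark is not load-bearing, since your argument only needs that $\langle a\rangle$ is maximal abelian in $G$ and that the kernels of cyclic quotients are as enumerated.
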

The following corollaries are immediate from \Cref{pcis_$p^n$}
\begin{corollary}
	The structure of $	\mathbb{F}_qG$, where $G:=G_{p^{n+1}}$, is
\begin{equation*}\label{wedder p^n}
	\mathbb{F}_qG \cong
	\begin{cases}
		\mathbb{F}_q 
		\bigoplus \dfrac{\phi(p)}{o_p(q)}\,\mathbb{F}_{q^{o_p(q)}}
		\bigoplus  \limits_{j=1}^{n-1}p\dfrac{\phi(p^j)}{o_{p^j}(q)}\,\mathbb{F}_{q^{o_{p^{j}}(q)}} 
		\bigoplus \dfrac{\phi(p^{\,n})}{o_{p^{\,n}}(q)}  
		M_p\!\left( \frac{\mathbb{F}_{q^{o_{p^{\,n}}(q)}}}{p} \right), 
		 \mathrm{if}~ 1+p^{\,n-1} \in \langle q \rangle ~\mathrm{mod}~ p^{n}, \\
		\mathbb{F}_q 
		\bigoplus \dfrac{\phi(p)}{o_p(q)}\,\mathbb{F}_{q^{o_p(q)}}
		\bigoplus  \limits_{j=1}^{n-1}p\dfrac{\phi(p^j)}{o_{p^j}(q)}\,\mathbb{F}_{q^{o_{p^{j}}(q)}} 
		\bigoplus \dfrac{\phi(p^{\,n})}{p o_{p^{\,n}}(q)}  
		M_p\!\left( \mathbb{F}_{q^{o_{p^{\,n}}(q)}} \right),
		\text{otherwise.}
	\end{cases}
\end{equation*}
\end{corollary}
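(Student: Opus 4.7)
The plan is to apply the Wedderburn–Artin decomposition $\mathbb{F}_q G = \bigoplus_e \mathbb{F}_q G e$, with $e$ ranging over the pcis listed in \Cref{pcis_$p^n$}, and to identify each simple component via the isomorphism (\ref{equation 2}), namely $\mathbb{F}_q G e_C(G,H,K)\cong M_{[G:H]}(\mathbb{F}_{q^{o/[E:H]}})$ with $E=\mathcal{E}_G(H/K)$ and $o$ the multiplicative order of $q$ modulo $[H:K]$.

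First I would dispose of the strong Shoda pairs of the form $(G,K)$. For these $H=G$, so $[G:H]=1$ and $E=G$, and (\ref{equation 2}) (equivalently \Cref{parmameters $G=H$}(i)) identifies each component as a field extension $\mathbb{F}_{q^{o_{|G/K|}(q)}}$, with one copy per orbit in $\mathcal{R}(G/K)$. Counting orbits using \Cref{pcis_$p^n$}, the pair $(G,G)$ contributes $\mathbb{F}_q$; the pair $(G,\langle a\rangle)$ contributes $\phi(p)/o_p(q)$ copies of $\mathbb{F}_{q^{o_p(q)}}$; and for each $1\le j\le n-1$ the $p$ choices of $i$ in $(G,\langle a^{p^j},a^{ip^{j-1}}b\rangle)$ together yield $p\,\phi(p^j)/o_{p^j}(q)$ copies of $\mathbb{F}_{q^{o_{p^j}(q)}}$.

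The only remaining Shoda pair is $(\langle a\rangle,1)$, for which $H=\langle a\rangle\lneq G$ and $[G:H]=p$, so (\ref{equation 2}) yields a block $M_p(\mathbb{F}_{q^{o_{p^n}(q)/[E:H]}})$ per orbit in $\mathcal{R}(\langle a\rangle/1)$. The crux is computing $E:=\mathcal{E}_G(\langle a\rangle/1)$. Since the $G$-action on $\Irr(\langle a\rangle)$ factors through $G/\langle a\rangle=\langle b\rangle$, which has order $p$, each orbit has size $1$ or $p$. Dually to $b^{-1}ab=a^{1+p^{n-1}}$, a generator $\gamma$ of $\Irr(\langle a\rangle)$ is sent by $b$ to $\gamma^{1+p^{n-1}}$; hence $b$ stabilizes $C_q(\gamma)$ if and only if $1+p^{n-1}\in\langle q\rangle\pmod{p^n}$. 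In that case $E=G$, $[E:H]=p$, and all $\phi(p^n)/o_{p^n}(q)$ cyclotomic cosets are $G$-fixed, producing $\phi(p^n)/o_{p^n}(q)$ copies of $M_p(\mathbb{F}_{q^{o_{p^n}(q)/p}})$; otherwise $E=H$, $[E:H]=1$, every $b$-orbit has size $p$, and we obtain $\phi(p^n)/(p\,o_{p^n}(q))$ copies of $M_p(\mathbb{F}_{q^{o_{p^n}(q)}})$.

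The argument is essentially bookkeeping, so the only potential obstacle is the orbit count for $\mathcal{R}(\langle a\rangle/1)$, settled by the dichotomy above. A sanity check by dimension closes the proof: summing the $\mathbb{F}_q$-dimensions of the field components gives $1+(p-1)+p\sum_{j=1}^{n-1}\phi(p^j)=p^n$, and the matrix component contributes $p^2\cdot o_{p^n}(q)/p$ times $\phi(p^n)/o_{p^n}(q)$ in the first case, respectively $p^2\cdot o_{p^n}(q)$ times $\phi(p^n)/(p\,o_{p^n}(q))$ in the second, both equal to $p\,\phi(p^n)=p^{n+1}-p^n$; the totals match $|G|=p^{n+1}$ in both cases.
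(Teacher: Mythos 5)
Your proposal is correct and follows the route the paper intends: the corollary is stated as immediate from \Cref{pcis_$p^n$}, i.e.\ one reads off the simple components from the listed strong Shoda pairs via the isomorphism (\ref{equation 2}), with the $(G,K)$ pairs giving the field summands and the pair $(\langle a\rangle,1)$ giving the $M_p$ blocks, the dichotomy on $\mathcal{E}_G(\langle a\rangle/1)$ being governed by whether $1+p^{n-1}\in\langle q\rangle \bmod p^n$, exactly as you argue. Your orbit/stabilizer analysis and the closing dimension count supply precisely the bookkeeping the paper leaves implicit, so there is nothing to add.
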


%Its presentation can be given by 
%\[
%G = \langle a, b \mid a^{p^{\,n-1}} = 1,\; b^p = 1,\; b^{-1}ab= a^{\,p^{\,n-2}+1} \rangle,
%\] $n \geq 3$.
%
%
%By the algorithm given in \cite{BM14} and by using the structure of $G$, we have
%\[
%\mathcal{S}(G) = 
%\Big\{
%(G, G),\; (G, \langle a \rangle),\;  (\langle a \rangle, \langle 1 \rangle)
%\Big\} 
%\cup 
%\Big\{(G, \langle a^{p^j}, a^{ip^{j-1}} b \rangle)\;\Big|\;0\leq i \leq p-1,\; 1 \leq j \leq n-2
%\Big\}
%\].

			\begin{corollary}\label{Cor_distance_{p^n}}In the foregoing notation, 
				\begin{enumerate}
						\item $\dim_{\mathbb{F}_q}(\mathbb{F}_qG_{p^{n+1}}e_{p^n, k})=
				po_{p^{n}}(q)\gcd(\omega_0, p)$, where  $\omega_0$ is the least integer such that 
				$(1+p^{n-1})^{\omega_0} \in \langle q \rangle$. 
						\item 	The set  
							$\mathcal{B}=
						\{a^{\eta_a}b^{\eta_b}e_{p^n,k}~|~0\leq \eta_a< \gcd(\omega_0,p)o_{p^{n}}(q),~0\leq \eta_b \leq p-1\}$, where  $\omega_0$ is the least integer such that 
						$(1+p^{n-1})^{\omega_0} \in \langle q \rangle$, is an $\mathbb{F}_q$-basis of $\mathbb{F}_qG_{p^{n+1}}e_{p^n, k}$.
						\item $d=\begin{cases}
						2 \leq d \leq p^{n} ~~\mathrm{if}~~ n\leq i_0\\
						2 \leq d \leq p^{i_0} ~~\mathrm{otherwise}.
						\end{cases}$

					\end{enumerate}   	
			\end{corollary}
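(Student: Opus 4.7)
My plan is to read all three assertions off the explicit form of $e_{p^n,k}$ in \Cref{tab:pcis_G_pn} together with the Wedderburn decomposition stated immediately before the corollary, following the template of \Cref{Cor_distance_{M_{2^{n+1}}}} for the $2$-group case. For Part (1), I first note that $1+p^{n-1}$ has order exactly $p$ in $(\mathbb{Z}/p^n\mathbb{Z})^{\times}$ whenever $n\geq 2$ and $p$ is odd, so $\omega_0\in\{1,p\}$ and $\gcd(\omega_0,p)=\omega_0$. When $1+p^{n-1}\in\langle q\rangle\bmod p^n$ one has $\omega_0=1$, matching $\dim_{\mathbb{F}_q}M_p(\mathbb{F}_{q^{o_{p^n}(q)/p}})=p\cdot o_{p^n}(q)$; when $1+p^{n-1}\notin\langle q\rangle$, one has $\omega_0=p$, matching $\dim_{\mathbb{F}_q}M_p(\mathbb{F}_{q^{o_{p^n}(q)}})=p^2\cdot o_{p^n}(q)=p\cdot o_{p^n}(q)\gcd(\omega_0,p)$.

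For Part (2), since $|\mathcal{B}|$ equals the dimension from Part (1), it suffices to show that $\mathcal{B}$ spans $\mathbb{F}_qG_{p^{n+1}}e_{p^n,k}$. The Wedderburn component is generated as an $\mathbb{F}_q$-algebra by $ae_{p^n,k}$ and $be_{p^n,k}$ subject to $(ae_{p^n,k})^{p^n}=e_{p^n,k}$, $(be_{p^n,k})^p=e_{p^n,k}$ and the twist $(be_{p^n,k})(ae_{p^n,k})=(ae_{p^n,k})^{1+p^{n-1}}(be_{p^n,k})$, so the commutation relation reduces every element to a combination of $(ae_{p^n,k})^{\eta_a}(be_{p^n,k})^{\eta_b}$ with $0\leq \eta_b<p$. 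The decisive step is to establish $\dim_{\mathbb{F}_q}\mathbb{F}_q[ae_{p^n,k}]=\gcd(\omega_0,p)\,o_{p^n}(q)$; I would do this by identifying the eigenvalues of the image of $a$ in the component with the $b$-orbit $\{\xi_{p^n}^{k(1+p^{n-1})^s}\}_{0\leq s<p}$ of the character $\chi^k$ of $\langle a\rangle$, and verifying that these $p$ values occupy a single $q$-cyclotomic coset when $\omega_0=1$ and $p$ distinct $q$-cyclotomic cosets when $\omega_0=p$, giving a minimal polynomial for $ae_{p^n,k}$ over $\mathbb{F}_q$ of the predicted degree $\omega_0\cdot o_{p^n}(q)$.

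For Part (3), the lower bound $d\geq 2$ is immediate: any codeword $cg\in\mathbb{F}_qG_{p^{n+1}}e_{p^n,k}$ of weight one forces $cg(e_{p^n,k}-1)=0$ in $\mathbb{F}_qG_{p^{n+1}}$, and since $cg$ is a unit this yields $e_{p^n,k}=1$, contradicting the fact that $e_{p^n,k}$ is a proper pci. The upper bound comes from reading off $\mathrm{wt}(e_{p^n,k})$ from \Cref{tab:pcis_G_pn}: for $n\leq i_0$ the support of $e_{p^n,k}$ is contained in $\langle a\rangle$ and has at most $p^n$ elements, while for $n>i_0$ the explicit formula shows the support sits in $\{a^{-rp^{n-i_0}}:0\leq r<p^{i_0}\}$ and so has at most $p^{i_0}$ elements. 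The main obstacle is the degree computation in Part~(2); once one verifies that the $b$-orbit of $\chi^k$ meets exactly $\omega_0$ distinct $q$-cyclotomic cosets, the rest reduces to bookkeeping against the table.
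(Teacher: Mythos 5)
Your proposal is correct and takes essentially the route the paper intends: the corollary is stated there as an immediate consequence of the pci table and the Wedderburn decomposition, and your reading of the dimension from the matrix component (with $\omega_0\in\{1,p\}$, $\gcd(\omega_0,p)=\omega_0$), your spanning-plus-cardinality basis argument, and your bounds $2\le d\le \operatorname{wt}(e_{p^n,k})$ read off the support in the table are exactly that derivation. The one step the paper leaves implicit — that $\dim_{\mathbb{F}_q}\mathbb{F}_q[ae_{p^n,k}]=\gcd(\omega_0,p)\,o_{p^n}(q)$, via the $b$-orbit of $\xi_{p^n}^{k}$ meeting one or $p$ distinct $q$-cyclotomic cosets according as $1+p^{n-1}\in\langle q\rangle$ or not — you supply correctly.
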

	In view of \Cref{tab:pcis_G_pn} and  the Wedderburn decomposition of $\mathbb{F}_qG_{p^{n+1}}$, we have following:
	
	\begin{corollary}\label{left pcis F_qG_{p^{n+1}}}
	The semisimple group algebra $\mathbb{F}_qG_{p^{n+1}}$, with $G_{p^{n+1}}$ as defined in \Cref{pcis_$p^n$}, decomposes into minimal left ideals generated by a complete set of primitive orthogonal idempotents, given by:
	\begin{itemize}
			\item[(i)] $1 \;+\frac{\phi(p)}{o_p(q)}\,+ p\sum \limits_{j=1}^{n-1}\frac{\phi(p^j)}{o_{p^j}(q)}$ central  idempotents corresponding to strong Shoda pairs of type $(G_{p^{n+1}},K),$ as listed in \Cref*{tab:pcis_G_pn}.
			\item[(ii)] $2\frac{\phi(p^{n})}{\kappa o_{p^{n}}(q)}$ left  idempotents: $e_{p^{n},k}\widehat{\langle b \rangle}$ and $e_{p^{n},k}(1-\widehat{\langle b \rangle})$,	where $\kappa=p$ for $1+p^{n-1} \notin \langle q \rangle \mod p^{n}$ and $\kappa=1$ in all other cases.
		\end{itemize}
	\end{corollary}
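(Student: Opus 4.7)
The plan is to mirror the proof of \Cref{left pcis F_qD_{2^{n+1}}} (the dihedral case), leveraging the Wedderburn decomposition of $\mathbb{F}_q G_{p^{n+1}}$ displayed just above the corollary together with \Cref{pcis_$p^n$}. First I would separate the strong Shoda pairs in $\mathcal{S}(G_{p^{n+1}})$ into two classes: those of type $(G_{p^{n+1}},K)$ with $K$ a proper normal subgroup such that $G_{p^{n+1}}/K$ is cyclic, and the remaining pair $(\langle a\rangle,\langle 1\rangle)$.

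For each pair in the first class, the corresponding Wedderburn component is one of the field summands $\mathbb{F}_{q^{o_{p^j}(q)}}$. Since a field is simple as a left module over itself, each such pci is automatically a primitive idempotent of $\mathbb{F}_q G_{p^{n+1}}$ and contributes a single minimal left ideal. Counting these pcis by running over the strong Shoda pairs listed in \Cref{tab:pcis_G_pn}, namely the pairs $(G_{p^{n+1}},G_{p^{n+1}})$, $(G_{p^{n+1}},\langle a\rangle)$, and $(G_{p^{n+1}},\langle a^{p^j},a^{i p^{j-1}}b\rangle)$ for $0\le i\le p-1$ and $1\le j\le n-1$, and tallying $q$-cyclotomic cosets of generators gives the total
\[
1+\frac{\phi(p)}{o_p(q)}+p\sum_{j=1}^{n-1}\frac{\phi(p^j)}{o_{p^j}(q)},
\]
establishing part (i).

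For the remaining pair $(\langle a\rangle,\langle 1\rangle)$ the Wedderburn component is a proper matrix algebra $M_p(\cdot)$, so the pci $e_{p^n,k}$ is \emph{not} primitive and must be split into left idempotents within this matrix component. Here I would invoke \cite[Proposition~2.5]{APM19}, exactly as in \Cref{left pcis F_qD_{2^{n+1}}}, applied with the subgroup $\langle b\rangle$. This yields the orthogonal decomposition $e_{p^n,k}=e_{p^n,k}\widehat{\langle b\rangle}+e_{p^n,k}(1-\widehat{\langle b\rangle})$, producing two left idempotents per $k$. The number of distinct orbit representatives $k$ equals $\phi(p^n)/o_{p^n}(q)$ when $1+p^{n-1}\in\langle q\rangle \mod p^n$ (so $\kappa=1$) and $\phi(p^n)/(p\,o_{p^n}(q))$ otherwise (so $\kappa=p$), consistent with the count of pcis recorded in \Cref{pcis_$p^n$} and the structure formula following it. Multiplying by two to account for both $\widehat{\langle b\rangle}$ and $1-\widehat{\langle b\rangle}$ gives the claimed total of $2\phi(p^n)/(\kappa\, o_{p^n}(q))$ left idempotents.

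The step I expect to require the most care is the application of \cite[Proposition~2.5]{APM19}: one must check that its hypotheses on the interaction of $\widehat{\langle b\rangle}$ with the pci $e_{p^n,k}$ are met in the present setting, where the action $b^{-1}ab=a^{1+p^{n-1}}$ is nontrivial only modulo $p^{n-1}$. This in particular requires verifying orthogonality of the produced idempotents across different values of $k$ (which follows from the orthogonality of the $e_{p^n,k}$ themselves) and that the resulting left ideals $\mathbb{F}_q G_{p^{n+1}}\, e_{p^n,k}\widehat{\langle b\rangle}$ and $\mathbb{F}_q G_{p^{n+1}}\, e_{p^n,k}(1-\widehat{\langle b\rangle})$ are indeed minimal, i.e.\ that no further splitting survives once one passes to the fixed matrix component; this is the content borrowed from \cite{APM19} and is the sole nontrivial input beyond \Cref{pcis_$p^n$}.
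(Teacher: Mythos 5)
Your route is the one the paper itself intends: the corollary is presented there without proof, as an immediate consequence of the table of pcis (\Cref{tab:pcis_G_pn}) and the Wedderburn decomposition, in parallel with \Cref{left pcis F_qD_{2^{n+1}}}; your part (i) is fine, since the components attached to pairs of type $(G_{p^{n+1}},K)$ are fields, so those pcis are already primitive, and your tally over the pairs $(G_{p^{n+1}},G_{p^{n+1}})$, $(G_{p^{n+1}},\langle a\rangle)$ and $(G_{p^{n+1}},\langle a^{p^j},a^{ip^{j-1}}b\rangle)$ reproduces the stated count.

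The step you defer to \cite[Proposition~2.5]{APM19} in part (ii) is, however, a genuine gap, and for odd $p$ it cannot be closed as you describe. The component $\mathbb{F}_qG_{p^{n+1}}e_{p^n,k}$ is $M_p(\mathbb{F}_{q^f})$, and since $G_{p^{n+1}}=\langle a\rangle\rtimes\langle b\rangle$ with $\langle a\rangle\cap\langle b\rangle=1$, Mackey's formula shows that the restriction to $\langle b\rangle$ of the corresponding simple module is the regular $C_p$-module; hence $\widehat{\langle b\rangle}$ acts on it with rank $1$. Thus $e_{p^n,k}\widehat{\langle b\rangle}$ is indeed primitive, but $e_{p^n,k}(1-\widehat{\langle b\rangle})$ has rank $p-1\geq 2$, so it is not primitive and $\mathbb{F}_qG_{p^{n+1}}e_{p^n,k}(1-\widehat{\langle b\rangle})$ is not a minimal left ideal: its $\mathbb{F}_q$-dimension is $(p-1)pf$, whereas the minimal left ideals of this component have dimension $pf$. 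Equivalently, the identity of $M_p(\mathbb{F}_{q^f})$ cannot be a sum of two orthogonal primitive idempotents when $p\geq 3$, so a complete set must contain $p$ idempotents per matrix component, i.e.\ $p\,\phi(p^n)/(\kappa\,o_{p^n}(q))$ of them, not $2\,\phi(p^n)/(\kappa\,o_{p^n}(q))$. The cited proposition is invoked in the paper where the components are $2\times 2$ (the $D_{2^{n+1}}$ and $G_{2^{n+1}}$ cases), where both $e\widehat{\langle b\rangle}$ and $e(1-\widehat{\langle b\rangle})$ have rank $1$; it does not transfer verbatim to $p>2$, so your final "check that no further splitting survives" is exactly where the argument fails. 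To obtain a correct proof one must split $e_{p^n,k}(1-\widehat{\langle b\rangle})$ further, e.g.\ into the $\langle a\rangle$-conjugates $a^{-t}\,e_{p^n,k}\widehat{\langle b\rangle}\,a^{t}$ for a suitable set of exponents $t$ (checking orthogonality), or else weaken the conclusion to a decomposition into orthogonal left ideals that are not all minimal; as written, the deferred step is false for $p\geq 3$, a defect your proposal inherits from following the $p=2$ template.
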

%	\begin{itemize}
%		\item If $1+p^{n-2} \in \langle q \rangle \mod p^{n-1}$, then $\mathbb{F}_qG$ has 
%		\[
%		1 \;+\frac{\phi(p)}{o_p(q)}\,+ p\sum_{j=1}^{n-2}\frac{\phi(p^j)}{o_{p^j}(q)}+2\frac{\phi(p^{n-1})}{o_{p^{n-1}}(q)}
%		\]
%		left irreducible idempotents:
%		\begin{itemize}
%			\item[(i)]  $	1 \;+\frac{\phi(p)}{o_p(q)}\,+ p\sum_{j=1}^{n-2}\frac{\phi(p^j)}{o_{p^j}(q)}$ are (left)central irreducible idempotents corresponding to Shoda pair of type $(G, K)$as in the above table.
%			\item[(ii)] $2\frac{\phi(p^{n-1})}{o_{p^{n-1}}(q)}$ left irreducible idempotents of the form $e_{3,k}\widehat{b}$ and $e_{3,k}(I-\widehat{b})$.
%		\end{itemize}
%		
%		\item If  $1+p^{n-2}\notin \langle q \rangle \mod p^{n-1}$, then $\mathbb{F}_qG$ has 
%		\[
%		1 \;+\frac{\phi(p)}{o_p(q)}\,+ p\sum_{j=1}^{n-2}\frac{\phi(p^j)}{o_{p^j}(q)}+2\frac{\phi(p^{n-1})}{p\cdot o_{p^{n-1}}(q)}
%		\]
%		left irreducible idempotents:
%		\begin{itemize}
%			\item[(i)] $	1 \;+\frac{\phi(p)}{o_p(q)}\,+ p\sum_{j=1}^{n-2}\frac{\phi(p^j)}{o_{p^j}(q)}$ are (left)central irreducible idempotents corresponding to Shoda pair of type $(G, K)$as in the above table.
%			\item[(ii)]  $2\frac{\phi(p^{n-1})}{p\cdot o_{p^{n-1}}(q)}$ left irreducible idempotents of the form $e_{3,k}\widehat{b}$ and $e_{3,k}(I-\widehat{b})$.
%		\end{itemize}
%	\end{itemize}
Note that
the group 
	$G_{2^{n+1}} = \langle a, b \mid a^{2^{n}} = 1,\ b^2 = 1,\ b^{-1}ab = a^{1 + 2^{n-1}} \rangle$ ($G_{p^{n+1}}$ of \Cref{pcis_$p^n$} with $p = 2$), 
	 is the metacyclic $2$-group listed as (iv) in Subsection~3.2 and the results obtained in \Cref{pcis_$p^n$} coincide with those derived earlier.

	\subsection{Metacyclic $p$-group codes, $p\neq 2$}
		The classification of non-abelian  metacyclic $p$-groups of order $p^n, n \geq 3$ has been provided in \cite{Ste96}. One can, in principle use the presentation to compute the strong Shoda pairs and hence the idempotents for any of these groups. The strong Shoda pairs of all groups (not necessarily metacyclic) of order  $p^n, n \leq 4$ have been provided in  \cite{BM14} and \cite{BM15} using which the structure of their respective group algebras has been provided in \cite{GM19}. We list the strong Shoda pairs for metacyclic groups of order $p^5$ and consequently provide the description of their group algebras. 
		\begin{proposition}\label{prop_metacyclic_codes_p^5} For an odd prime $p$, there are four non isomorphic non-abelian metacyclic groups of order $p^5$ given by :
		
		\[
		\begin{aligned}
			G_1 &= \langle a, b \mid a^{p^2} = 1,\; b^{p^3} = 1,\; bab^{-1} = a^{p+1} \rangle,\\
			G_2 &= \langle a, b \mid a^{p^3} = 1,\; a^{p^2}=b^{p^2} ,\; bab^{-1} = a^{p+1} \rangle,\\
			G_3 &= \langle a, b \mid a^{p^3} = 1,\; a^{p^2}=b^{p^2} ,\; bab^{-1} = a^{p^2+1} \rangle,\\
			G_4 &= \langle a, b \mid a^{p^4} = 1,\; a^{p}=b^{p} ,\; bab^{-1} = a^{p^3+1} \rangle.
		\end{aligned}
		\]
		A complete set  $\mathcal{S}(G_i)$ of $G_i, 1 \leq i \leq 4$ is as listed below:
		
		\begin{small}
		
			\begin{description}
			\item[$\mathcal{S}(G_1)$] 	$
		%	\mathcal{S}(G_1) 
		=
			\Big\{ (G_1, \langle a, b^{p^i} \rangle) \Big\}_{i=0}^2\cup \Big\{ (G_1, \langle a^i b, a^p \rangle) \Big\}_{i=0}^{p-1}
			\cup \Big\{ (G_1, \langle ab^{ip}, a^p \rangle) \Big\}_{i=0}^{p-1}
			\cup \Big\{ (G_1, \langle ab^{-ip^2}, a^p \rangle) \Big\}_{i=1}^{p-1}
			\cup \Big\{ (\langle a^2 b, b^p \rangle,\; \langle a^{ip} b^{p^2} \rangle) \Big\}_{i=1}^{p-1}
			\cup \Big\{ (\langle a^2 b, b^p \rangle,\; \langle a^{ip} b^p, b^{p^2} \rangle) \Big\}_{\substack{0 \leq i \leq p-1 \\ i \neq 2}}
			\cup \Big\{ (\langle a^2 b, b^p \rangle,\; \langle a^{2p+2} b^{1-2p}, b^{p^2} \rangle) \Big\}.
$
		\item[$\mathcal{S}(G_2)$] 	$
	=
		\Big\{ (G_2, \langle a, b^{p^i} \rangle) \Big\}_{i=0}^{2}
		\cup \Big\{ (G_2, \langle a^k b, a^p \rangle) \Big\}_{k=0}^{p-1}
		\cup \Big\{ (G_2, \langle ab^{kp}, a^p \rangle) \Big\}_{k=1}^{p-1}
		\cup \Big\{ (\langle a^{-1}b, a^p \rangle,\; \langle a^{kp} b^p, b^{p^2} \rangle) \Big\}_{k=0}^{p-1}
		\cup \Big\{ (\langle a^{-1} b, b^p \rangle,\; \langle a^{-1} b^{1-2p}, b^{p^2} \rangle) \Big\}
		\cup \Big\{ (\langle ab^{p^2-p} \rangle, \langle 1 \rangle) \Big\}.$
		\item[$\mathcal{S}(G_3)$] =	$
		\Big\{ (G_3, \langle a^k b, b^{p^2} \rangle) \Big\}_{k=0}^{p^2-1}
		\cup \Big\{ (G_3, \langle a b^{kp}, b^{p^2} \rangle) \Big\}_{k=1}^{p-1}
		\cup \Big\{ (G_3, \langle a^k b, a^p \rangle) \Big\}_{k=0}^{p-1}
		\cup \Big\{ (G_3, \langle a, b^{p^i} \rangle) \Big\}_{i=0}^{2}
		\cup \linebreak\Big\{ (\langle a, b^{p} \rangle,\; \langle a^{p(p-1)} b^{p(pk+1)} \rangle) \Big\}_{k=0}^{p-1}.$
			\item[$\mathcal{S}(G_4)$] =	$
		\Big\{ (G_4, \langle a b^{-1}, b^{p^i} \rangle) \Big\}_{i=0}^{3}
		\cup \Big\{ (G_4, \langle b \rangle) \Big\}
		\cup \Big\{ (\langle a^p, a^{-1} b^2 \rangle,\; \langle 1 \rangle) \Big\}
		\cup \Big\{ (G_4, \langle a b^{k p^i - 1} \rangle) \Big\}_{\substack{1 \leq k \leq p-1 \\ 0 \leq i \leq 2}}.
		$
	\end{description}
\end{small}
\end{proposition}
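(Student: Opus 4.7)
The plan is threefold: establish that the four listed presentations exhaust the isomorphism classes of non-abelian metacyclic $p$-groups of order $p^5$, enumerate a complete set of candidate strong Shoda pairs for each $G_i$, and verify that the candidates are pairwise inequivalent and account for the whole group algebra. For the first task I would invoke Sterling's classification in \cite{Ste96}, and then certify that $G_1, G_2, G_3, G_4$ are pairwise non-isomorphic by comparing elementary invariants: the exponent of $G_i$, the order and structure of $[G_i, G_i]$, and the isomorphism type of the abelianization $G_i/[G_i, G_i]$ distinguish all four presentations. (For example, $G_4$ has exponent $p^4$ whereas $G_1$ has exponent $p^3$, and $G_2, G_3$ are separated by the action, visible in the commutator $[a,b]$.)

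For the enumeration, I would apply the algorithm of \cite{BM14, BM16}, which is applicable because metacyclic groups are normally monomial. Concretely, for each $G_i$ I would first list the normal subgroups $N \trianglelefteq G_i$ using the commutation rule $b^{-1}ab = a^r$ (with $r \in \{1+p,\; 1+p^2,\; 1+p^3\}$ depending on $i$); then for each normal subgroup $K$ I would hunt for $H$ with $K \leq H \trianglelefteq G_i$, $H/K$ cyclic, and $H/K$ maximal abelian inside $N_{G_i}(K)/K$. The parametric families in the statement correspond precisely to $G_i$-conjugacy representatives of such pairs: for instance, in $\mathcal{S}(G_1)$ the family $\{(G_1, \langle a^i b, a^p\rangle)\}_{i=0}^{p-1}$ enumerates the cyclic-of-order-$p$ quotients arising from index-$p$ normal subgroups containing $\langle a^p\rangle$, and the further families with $b^{p^2}$ arise from descending through the derived series.

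Equivalent pairs are then weeded out by checking when two candidates yield the same primitive central idempotent $e_C(G_i, H, K)$; this is the source of the apparently ad hoc exclusions (e.g.\ the omission of $i=2$ in one family for $G_1$, compensated by the single pair $(\langle a^2 b, b^p\rangle, \langle a^{2p+2} b^{1-2p}, b^{p^2}\rangle)$). Completeness is confirmed by the dimension identity
\[
\sum_{(H,K) \in \mathcal{S}(G_i)} [G_i : H]^2 \,\frac{\phi([H:K])}{[\mathcal{E}_{G_i}(H/K) : H]} \;=\; |G_i| \;=\; p^5,
\]
which follows from the Wedderburn decomposition in \eqref{equation 2} summed over all pcis. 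The main obstacle will be the bookkeeping for Step~2: the normal subgroup lattice of a metacyclic $p$-group is intricate (subgroups generated by elements of the form $a^{i p^s} b^{j p^t}$ are normal only under specific congruences on the exponents), and the maximal-abelian condition inside $N_{G_i}(K)/K$ must be verified case by case. The difficulty is amplified for $G_2$ and $G_4$, where strong Shoda pairs with $H$ a \emph{proper} subgroup of $G_i$ arise (namely $(\langle a^{-1}b, a^p\rangle, \cdot)$ and $(\langle a^p, a^{-1} b^2 \rangle, \langle 1\rangle)$), requiring explicit computation of normalizers strictly smaller than $G_i$ and a careful check that the induced module is indeed absolutely irreducible.
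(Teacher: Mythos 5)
The paper offers no written proof of this proposition: it cites \cite{Ste96} for the classification and relies on the strong Shoda pair machinery of \cite{BM14,BM16}, so your overall strategy (classification plus algorithmic enumeration plus a completeness count) is in the same spirit as what the authors implicitly did. However, two of your concrete steps would fail. First, your enumeration restricts $K$ to \emph{normal} subgroups of $G_i$. The defining condition only requires $H\trianglelefteq G_i$, $K\trianglelefteq H$, and $H/K$ cyclic and maximal abelian in $N_{G_i}(K)/K$; the subgroup $K$ need not be normal in $G_i$, and indeed some listed pairs have non-normal $K$. For example, in $G_1$ take $x=a^{2p+2}b^{1-2p}$ and $K=\langle x,\,b^{p^2}\rangle$ from the last pair in $\mathcal{S}(G_1)$: since $aba^{-1}=ba^{-p}$ with $a^{-p}$ central, one gets $axa^{-1}=x\,a^{-p}$, while every element of $K=\langle x\rangle$ with trivial $b$-part is trivial, so $a^{-p}\notin K$ and $K$ is not normal in $G_1$. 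Your sieve would therefore never produce this pair (and, as a minor related point, strong Shoda pairs with $H$ a proper subgroup occur in all four groups, not only $G_2$ and $G_4$, e.g.\ $(\langle a^2b,b^p\rangle,\cdot)$ in $G_1$ and $(\langle a,b^p\rangle,\cdot)$ in $G_3$).

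Second, your completeness criterion is mis-stated. Each pair $(H,K)$ contributes $|\mathcal{R}(H/K)|=\dfrac{\phi([H:K])}{o\,[N_G(K):\mathcal{E}_G(H/K)]}$ pcis, each with component of $\mathbb{F}_q$-dimension $[G:H]^2\,\dfrac{o}{[\mathcal{E}_G(H/K):H]}$ by \eqref{equation 2}, so the total contribution of the pair is $\dfrac{[G:H]^2\,\phi([H:K])}{[N_G(K):H]}=[G:H]\,[G:N_G(K)]\,\phi([H:K])$, which is independent of $q$, and the correct identity is
\begin{equation*}
\sum_{(H,K)\in\mathcal{S}(G_i)}[G_i:H]\,[G_i:N_{G_i}(K)]\,\phi([H:K])\;=\;|G_i|\;=\;p^5 .
\end{equation*}
Your formula $\sum [G_i:H]^2\phi([H:K])/[\mathcal{E}_{G_i}(H/K):H]$ depends on $q$ through $\mathcal{E}$ and over-counts whenever $\mathcal{E}_{G_i}(H/K)\neq N_{G_i}(K)$ (already for a dihedral pair $(\langle a\rangle,1)$ with $-1\notin\langle q\rangle$ it gives $4\phi(n)$ instead of $2\phi(n)$), so as written it cannot certify completeness. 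The first part of your plan (using \cite{Ste96} and separating $G_1,\dots,G_4$ by exponent, $|G_i'|$ and the abelianization) is fine and matches the paper's reliance on the classification.
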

As a direct consequence we obtain the structure of $\mathbb{F}_qG_i, 1 \leq i \leq 4$ and observe that no two of them are isomorphic.
\begin{enumerate}
	\item[$\mathbb{F}_qG_1 \cong$] $	
 \mathbb{F}_q  \bigoplus \delta  (p+1) \mathbb{F}_{q^{o_p(q)}} \bigoplus  \delta p \Big[
	\mathbb{F}_{q^{o_{p^2}(q)}} \bigoplus   \mathbb{F}_{q^{o_{p^3}(q)}} \Big]  \bigoplus M_{p}\!\left(\mathbb{F}_{q^{ o_{p}(q)}}\right)
	\bigoplus$ \\
	  $ \delta(p-1)\Big[ M_{p}\left(\mathbb{F}_{q^{ \frac{o_{p^3}(q)}{p}}}\right)  
	\bigoplus  M_{p}\!\left(\mathbb{F}_{q^{ \frac{o_{p^2}(q)}{p}}}\right)\Big] .
$
		\item[$\mathbb{F}_qG_2 \cong $] 	$ \mathbb{F}_q  \bigoplus \delta \Big[ (p+1) \mathbb{F}_{q^{o_p(q)}} \bigoplus  
		p\mathbb{F}_{q^{o_{p^2}(q)}}  \bigoplus  (p-1) M_{p}\!\left(\mathbb{F}_{q^{ \frac{o_{p^2}(q)}{p}}}\right) \Big] 
		\bigoplus \delta \Big[ M_{p}\!\left(\mathbb{F}_{q^{ \frac{o_{p^2}(q)}{p}}}\right)  \bigoplus  M_{p}\!\left(\mathbb{F}_{q^{ o_{p}(q)}}\right)\Big].$
			\item[$\mathbb{F}_q G_3 \cong$] 	$ \mathbb{F}_q  \bigoplus  \delta \Big[ (p+1) \mathbb{F}_{q^{o_p(q)}} \bigoplus  
			p(p+1)\mathbb{F}_{q^{o_{p^2}(q)}} \bigoplus  p M_{p}\!\left(\mathbb{F}_{q^{ \frac{o_{p^3}(q)}{p}}}\right) \Big].$
				\item[$\mathbb{F}_q G_4 \cong$] 	$  \mathbb{F}_q \bigoplus \delta \Big[ (p+1) \mathbb{F}_{q^{o_p(q)}} \bigoplus  
				p\mathbb{F}_{q^{o_{p^2}(q)}} \bigoplus p\mathbb{F}_{q^{o_{p^3}(q)}}\bigoplus    M_{p}\!\left(\mathbb{F}_{q^{ \frac{o_{p^4}(q)}{p}}}\right) \Big].$
		
\end{enumerate}
	
			By using  \Cref{prop_metacyclic_codes_p^5}, one can easily obtain the complete list of pcis of $\mathbb{F}_qG_i, 1 \leq i \leq 4.$ For instance, consider the group  $G_1$ and its strong Shoda pairs $\Big\{	(\langle a^2 b, b^p \rangle,\; \langle a^{ip} b^{p^2} \rangle)\Big\}_{i=1}^{p-1}$. Then, we have 
			$$\epsilon_C(\langle a^2 b, b^p \rangle,\; \langle a^{ip} b^{p^2} \rangle )=\frac{1}{p^3}\widehat{\langle a^{ip} b^{p^2} \rangle}[(\Sigma_{\mathfrak{i}=0}^{p-1}\tr(\xi_{p^3}^{kp^2\mathfrak{i}}){{(a^2b)}^{-p^2\mathfrak{i}}})]$$ and  since $(a^2b)^{-p^2\mathfrak{i}}
			\in \mathcal{Z}(G_1)$, it follows that $e_C(G_1, \langle a^2 b, b^p \rangle,\; \langle a^{ip} b^{p^2} \rangle )=\epsilon_C(\langle a^2 b, b^p \rangle,\; \langle a^{ip} b^{p^2} \rangle).$ Consequently, we obtain a minimal group code of length $p^5$ and dimension $p  o_{p^3}(q)$, with minimum distance satisfying 
			\[
			2p \;\leq\; d \;\leq\; p^2,
			\]
			for all admissible choices of $i$ and $k$.
		\subsection{Good $p$-group codes}
\begin{example}
	Over the field $\mathbb{F}_2$, for non-abelian  groups of order $27$, the pcis corresponding to $(G,K)$-types yield codes with parameters $[27, 2, 18]$ by using \Cref{parmameters $G=H$}, which coincide with the best-known binary linear codes of these parameters \cite{Gras}.
\end{example}
\begin{example}\label{example_dihedral_central}
	\underline{$\mathbb{F}_3G$, where	$G:=D_{8} = \langle a, b \mid a^{4} = b^2 = 1,\; a^b = a^{-1} \rangle$.}\\
	Consider the left idempotent $(1-e)\widehat{b}$ where,
	$e := e_C(G, G, G)$. Then the code generated by $(1-e)\widehat{b}$ has parameters $[8, 3, 4]$, which is very close to the best-known $[8, 3, 5]$ code.
\end{example}
We now also consider a non metacyclic $2$-group code.
\begin{example}\label{example_order16}
	\underline{$\mathbb{F}_3 G$, where $G \cong C_2 \times Q_8$}.\\
Let $G$ be presented as	\[
	G := \langle a, b, c \mid a^{4} = b^{4} = c^{2} = 1,\; ba = a^{3}b,\; ca = ac,\; cb = bc, \;a^{2} = b^{2} \rangle.
	\]
We have that
	\[
	\mathcal{S}(G) = \{(G, G),\; (G, \langle a, b\rangle),\; (G, \langle b^i c \rangle),\; (G, \langle a^2, a^i b, a^j c\rangle) (\langle a, c \rangle, \langle a^i c \rangle) \mid 0 \leq i, j \leq 1 \}.
	\]
	Consider
	$e=1-(e_1 + e_2+e_3),$ where $e_1=e_C(G, G, G)=\widehat{G}$, $e_2=e_C(G, G, \langle a, b \rangle)$ and $e_3=e_C(G,\langle a,c\rangle, \langle c \rangle).$ 
	The code generated by $e$ is a $[16, 10, 4]$ code, and is a best-known code.
\end{example}

\section{Metacyclic group codes of arbitrary length}
So far we have considered metacyclic codes of groups whose order is divisible by at most two primes.  In this section, we  investigate metacyclic codes of length divisible by more than two primes. The direct products of metacyclic groups of relatively prime order are also considered.

\begin{theorem}\label{thm:character_sum_vanish}
	Let $p_1, p_2$ be distinct odd primes with $p_1 < p_2$ such that $p_1 \nmid (p_2-1)$, and  let $q$ be a natural number relatively prime to both $p_1$ and $p_2$. For $m, l \in \mathbb{N}$, suppose 	\[
	o_{p_1^{m}}(q) = \frac{\phi(p_1^{m})}{\delta_1}
~\text{and}~
	o_{p_2^{\ell}}(q) = \frac{\phi(p_2^{\ell})}{\delta_2},
	\]
	where $\delta_1 = p_1^{\,i_0^{(1)}-1}\delta_1'$ and  $\delta_2 = p_2^{\,i_0^{(2)}-1}\delta_2'$ with $\gcd(\delta_1', p_1) = \gcd(\delta_2', p_2) = 1$. 
	 If $n = p_1^{m} p_2^{\ell}$, then 
	 for every integer  $k$ with $\gcd(k,n) = 1$ and for every $j_1, j_2$ such that $1 \leq j_1 \leq m$ and $1 \leq j_2 \leq l$,
	\[
	\sum_{i=0}^{o_{p_1^{j_1}p_2^{j_2}}(q)-1} \xi_{p_1^{j_1}p_2^{j_2}}^{k q^i} = 0 
	\quad \Longleftrightarrow \quad 
	j_1 > i_0^{(1)} ~~\text{or}~~\ j_2 > i_0^{(2)}.
	\]
\end{theorem}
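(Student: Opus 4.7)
The plan is to reduce the two--prime statement to the single--prime case handled by \Cref{lemma_trace_zero 2} (and its odd--prime counterpart from~\cite{CM25}) via a Chinese Remainder Theorem (CRT) decomposition and a two--step trace formula. Write $T(k)$ for the sum in the theorem.

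First, I would set up notation. Let $N_r:=o_{p_r^{j_r}}(q)$ for $r=1,2$. By CRT, $U(n)\cong U(p_1^{j_1})\times U(p_2^{j_2})$ and $N:=o_n(q)=\lcm(N_1,N_2)$. Put $M:=\gcd(N_1,N_2)$ and form the tower $\mathbb{F}_q\subseteq F_0:=\mathbb{F}_{q^M}\subseteq F_r:=\mathbb{F}_{q^{N_r}}\subseteq F:=\mathbb{F}_{q^N}$. Since $\gcd(p_1^{j_1},p_2^{j_2})=1$, one has $F=F_1F_2$ and $F_1\cap F_2=F_0$, so $\mathrm{Gal}(F/F_0)\cong\mathrm{Gal}(F_1/F_0)\times\mathrm{Gal}(F_2/F_0)$. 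Writing $\xi_n^k=\xi_{p_1^{j_1}}^{k_1}\xi_{p_2^{j_2}}^{k_2}$ with $k_r\equiv k\pmod{p_r^{j_r}}$ and regrouping the Frobenius sum according to the diagonal embedding $\langle q\rangle\hookrightarrow\langle q\rangle_{p_1^{j_1}}\times\langle q\rangle_{p_2^{j_2}}$, one obtains
\begin{equation*}
T(k)\;=\;\mathrm{Tr}_{F_0/\mathbb{F}_q}\bigl(\alpha_1(k_1)\,\alpha_2(k_2)\bigr),\qquad\alpha_r(k_r):=\mathrm{Tr}_{F_r/F_0}(\xi_{p_r^{j_r}}^{k_r}).
\end{equation*}

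Second, for the direction $(\Leftarrow)$, assume $j_1>i_0^{(1)}$ (the case $j_2>i_0^{(2)}$ being symmetric). The hypothesis $p_1\nmid(p_2-1)$ together with $p_2\nmid(p_1-1)$ (automatic from $p_1<p_2$) forces $M$ to be coprime to $p_1p_2$, hence $p_1\mid N_1/M$. Therefore the unique order--$p_1$ subgroup $\langle q^{N_1/p_1}\rangle\subseteq U(p_1^{j_1})$ lies inside $\langle q^M\rangle$, and its generator has the form $1+c_1p_1^{j_1-1}$ with $\gcd(c_1,p_1)=1$. Partitioning the $N_1/M$ summands defining $\alpha_1(k_1)$ into cosets of this subgroup and evaluating each inner sum yields a geometric sum $\sum_{l=0}^{p_1-1}(\xi_{p_1}^{k_1gc_1})^l=0$, since $k_1gc_1$ is coprime to $p_1$. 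Thus $\alpha_1(k_1)=0$ and therefore $T(k)=0$.

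Third, for the direction $(\Rightarrow)$, assume $j_1\le i_0^{(1)}$ and $j_2\le i_0^{(2)}$, so $N_r=(p_r-1)/\delta_r'$ divides $p_r-1$ and is coprime to $p_1p_2$. A Teichm\"uller--type lift of $q\bmod p_r^{j_r}$ lies in the order--$(p_r-1)$ direct factor of $U(p_r^{j_r})$; the single--prime argument behind \Cref{lemma_trace_zero 2}, and the corresponding odd--prime calculation in~\cite[Lemma~3.1]{CM25}, shows that $\alpha_r(k_r)$ is a nonzero element of $F_0$, explicitly a nonzero scalar multiple of a primitive $p_r^{j_r}$--th root of unity. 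Combining these with the hypothesis $p_1\nmid(p_2-1)$, which keeps the cyclotomic data of $p_1$ and $p_2$ sufficiently transversal over $F_0$, one concludes $\mathrm{Tr}_{F_0/\mathbb{F}_q}(\alpha_1\alpha_2)\neq 0$.

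The main obstacle is the direction $(\Rightarrow)$: even when $\alpha_1,\alpha_2\in F_0^\times$, the outer trace $\mathrm{Tr}_{F_0/\mathbb{F}_q}(\alpha_1\alpha_2)$ can in principle vanish on nonzero elements, so nonvanishing of the two partial traces alone is insufficient. The hypothesis $p_1\nmid(p_2-1)$ is essential here: it decouples the multiplicative structures modulo the two primes enough to exclude $\alpha_1\alpha_2$ from the kernel of $\mathrm{Tr}_{F_0/\mathbb{F}_q}$. Verifying this step rigorously requires a careful comparison of Galois orbits of $\xi_{p_r^{j_r}}^{k_r}$ over $F_0$, which is the technical heart of the argument.
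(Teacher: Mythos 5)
Your reduction and your backward direction are sound, and they amount to a repackaging of the paper's own computation: the paper works directly with the exponent set $J=\{q^i \bmod p_1^{j_1}p_2^{j_2}\}$, writes $i=\lambda u+v$, identifies $q^{\lambda}$ (or $q^M$ in your notation) as $1+p_1^{i_0^{(1)}}p_2^{i_0^{(2)}}c$, and kills the sum with an inner geometric series; your CRT/relative-trace identity $T(k)=\mathrm{Tr}_{F_0/\mathbb{F}_q}\bigl(\alpha_1(k_1)\alpha_2(k_2)\bigr)$ with $\alpha_1(k_1)=0$ whenever $j_1>i_0^{(1)}$ is the same vanishing mechanism organized field-theoretically (your check that $M$ is prime to $p_1p_2$ uses $p_1\nmid(p_2-1)$ and $p_2>p_1$ exactly as needed). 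So the ``$\Leftarrow$'' half is correct.

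The genuine gap is the ``$\Rightarrow$'' half (nonvanishing when $j_1\le i_0^{(1)}$ and $j_2\le i_0^{(2)}$), which you explicitly leave open, and which cannot be closed by the route you sketch. First, your intermediate claim that $\alpha_r(k_r)$ is ``a nonzero scalar multiple of a primitive $p_r^{j_r}$-th root of unity'' is not justified: $\alpha_r(k_r)$ is a Gauss-period-type sum of $N_r/M$ distinct roots of unity, and even its nonvanishing does not follow formally from \Cref{lemma_trace_zero 2} or its odd-prime analogue, since those are traces over $\mathbb{F}_q$, not over $F_0$. Second, the obstacle you flag at the end is real and fatal rather than merely technical: the outer trace genuinely can vanish on $\alpha_1\alpha_2\neq 0$ under the stated hypotheses. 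Concretely, take $p_1=3$, $p_2=5$, $q=2$, $m=l=j_1=j_2=1$ (so $i_0^{(1)}=i_0^{(2)}=1$ and $3\nmid 5-1$): here $o_{15}(2)=4$, and if $\xi_{15}$ is a root of $x^4+x+1$ over $\mathbb{F}_2$, then $\xi_{15}+\xi_{15}^2+\xi_{15}^4+\xi_{15}^8=0$ although neither $j_1>i_0^{(1)}$ nor $j_2>i_0^{(2)}$; in your notation $\alpha_1=\xi_3\neq0$, $\alpha_2=\xi_5+\xi_5^{-1}\neq0$, but $\mathrm{Tr}_{\mathbb{F}_4/\mathbb{F}_2}(\alpha_1\alpha_2)=0$. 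So no ``careful comparison of Galois orbits'' will rescue the forward implication as stated; extra hypotheses (e.g.\ forcing $F_0=\mathbb{F}_q$, i.e.\ $\gcd(N_1,N_2)=1$, together with a genuine single-prime nonvanishing statement) would be needed. For comparison, the paper's own proof of this direction (its Case (iii)) consists only of the assertion that the sum is nonzero when $o_{p_1^{j_1}p_2^{j_2}}(q)$ is prime to $p_1p_2$, with no argument given, so your proposal does not fall short of the paper here—but neither text establishes the forward direction, and your honest identification of the obstruction is exactly where the difficulty (indeed, a counterexample) lies.
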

\begin{proof}
	Set $J = \{ q^i \mid 0 \leq i < o_{p_1^{j_1}p_2^{j_2}}(q)\}, 
	$ so that 	$\sum_{i=0}^{o_{p_1^{j_1}p_2^{j_2}}(q)-1} \xi_{p_1^{j_1}p_2^{j_2}}^{\,k q^i}
	= \sum_{q^i \in J} \xi_{p_1^{j_1}p_2^{j_2}}^{\,k q^i}.
	$
	Denote $ \mathrm{lcm}\!\left(\frac{p_1-1}{\delta_1'}, \frac{p_2-1}{\delta_2'}\right)$ by $\lambda$ and using division algorithm write $i=\lambda u+v$ with $0 \leq v \leq \lambda$.\\
		\emph{Case (i).} Suppose $i_0^{(1)} < j_1 \leq m$ and $i_0^{(2)} < j_2 \leq \ell$.  
	In this case 
	$o_{p_1^{j_1} p_2^{j_2}}(q) 
	= p_1^{j_1 - i_0^{(1)}} p_2^{j_2 - i_0^{(2)}} \lambda
	$ and $0 \leq u < p_1^{j_1 - i_0^{(1)}} p_2^{j_2 - i_0^{(2)}}$. We thus have for $q^i \in J$, $q^i=(q^\lambda)^u q^v$
which modulo $p_1^{j_1}p_2^{j_2}$ equals $(1 + p_1^{i_0^{(1)}} p_2^{i_0^{(2)}} c)^u q^v $ and 
	$(1 + p_1^{i_0^{(1)}} p_2^{i_0^{(2)}} c)^u q^v 
	\equiv q^v + p_1^{i_0^{(1)}} p_2^{i_0^{(2)}} w\mod~p_1^{j_1}p_2^{j_2}$, where $0 \leq w < p_1^{j_1-i_0^{(1)}}p_2^{j_2-i_0^{(2)}} $. Thus $J$ can be described as
	\[
	J = \bigl\{\, q^v + p_1^{i_0^{(1)}}p_2^{i_0^{(2)}} w 
	\ \bigm|\ 
	0 \leq v < \lambda,\ 
	0 \leq w < p_1^{j_1-i_0^{(1)}}p_2^{j_2-i_0^{(2)}} \bigr\}.
	\]
		It follows that
	\[
	\sum_{q^i \in J}\xi_{p_1^{j_1}p_2^{j_2}}^{\,k q^i}
	= \sum_{v=0}^{\lambda-1}\xi_{p_1^{j_1}p_2^{j_2}}^{\,k q^v}
	\sum_{w=0}^{p_1^{j_1-i_0^{(1)}}p_2^{j_2-i_0^{(2)}}-1}
	\bigl(\xi_{p_1^{j_1}p_2^{j_2}}^{\,k p_1^{i_0^{(1)}}p_2^{i_0^{(2)}}}\bigr)^w.
	\]
	The inner geometric sum vanishes since the base is a nontrivial root of unity of order $p_1^{j_1-i_0^{(1)}}p_2^{j_2-i_0^{(2)}}$. Hence the entire sum equals zero.
	
	\medskip
		\emph{Case (ii).} Suppose $i_0^{(1)} < j_1 \leq m$ and $1 \leq j_2 \leq i_0^{(2)}$ (the argument is symmetric if $i_0^{(2)} < j_2 \leq \ell$ and $1 \leq j_1 \leq i_0^{(1)}$). 
		In this case $o_{p_1^{j_1} p_2^{j_2}}(q) = p_1^{j_1 - i_0^{(1)}} \lambda.$ Hence, for  $q^i \in J$, writing $i = \lambda u + v$ where $0 \leq u < p_1^{j_1-i_0^{(1)}}.$
Therefore, $q^i
	\equiv (1 + p_1^{i_0^{(1)}}p_2^{i_0^{(2)}}c)^u q^v
	\equiv q^v + p_1^{i_0^{(1)}}p_2^{i_0^{(2)}} w 
	\mod{p_1^{j_1}p_2^{j_2}},$
	for	some $c,w \in \mathbb{Z}$.
	Therefore
	\[
	J = \bigl\{\, q^v + w p_1^{i_0^{(1)}} p_2^{i_0^{(2)}}
	\ \bigm|\ 
 0 \leq v < \lambda ,	0 \leq w < p_1^{j_1-i_0^{(1)}}\bigr\}.
	\]
	It follows that
	\[
	\sum_{q^i \in J} \xi_{p_1^{j_1}p_2^{j_2}}^{\,k q^i}
	= \sum_{v=0}^{\lambda-1} \xi_{p_1^{j_1}p_2^{j_2}}^{\,k q^v}
	\sum_{w=0}^{p_1^{j_1-i_0^{(1)}}-1}
	\bigl(\xi_{p_1^{j_1}p_2^{j_2}}^{\,k p_1^{i_0^{(1)}}	p_2^{i_0^{(2)}}}\bigr)^w.
	\]
	The inner sum vanishes since $\xi_{p_1^{j_1}p_2^{j_2}}^{\,k p_1^{i_0^{(1)}}p_2^{i_0^{(2)}}}$ is a nontrivial root of unity of order $p_1^{j_1-i_0^{(1)}}$. Hence the whole sum equals zero. 
	
	\medskip
	
	\emph{Case (iii).} Suppose $1 \leq j_1 \leq i_0^{(1)}$ and $1 \leq j_2 \leq i_0^{(2)}$.  
	In this case $o_{p_1^{j_1}p_2^{j_2}}(q)$ is coprime to $p_1p_2$.  and the sum 
	$\sum \limits_{i=0}^{o_{p_1^{j_1}p_2^{j_2}}(q)-1} \xi_{p_1^{j_1}p_2^{j_2}}^{k q^i} \neq 0.$
	\end{proof}
In view of \Cref*{lemma_trace_zero 2}, similar proof can be extended if one of the prime is equal to $2$. We provide the results without details. 
\begin{proposition}\label{thm:char_sum_vanish_2p_refined}
	Let $p$ be an odd prime and let $q$ be a power of a prime with $\gcd(q,2p)=1$. Write $q=\pm1+2^{\,i_0}c,$ where $ c$ is odd and $ i_0\ge2.$
	For $ m,\ell\in\mathbb{N}$, let $n=2^{m}p^{\ell}$ . Suppose
	$o_{p^{\ell}}(q)=\frac{\varphi(p^{\ell})}{\delta_p}$ with $ 
	\delta_p=p^{\,i_0^{(p)}-1}\delta_p'$, where $\gcd(\delta_p',p)=1.	$ 
	
	If $1\le j_1\le m$, $1\le j_2\le\ell$, then for every $k\in\mathbb{Z}$ with $\gcd(k,n)=1$,
	\[
	\sum_{t=0}^{o_{2^{j_1}p^{j_2}}(q)-1}\xi_{2^{j_1}p^{j_2}}^{\,kq^t}=0
	\]
	holds exactly in the following cases:
	\begin{enumerate}
		\item $j_2>i_0^{(p)}$; or
		\item $j_1>i_0$; or
		\item $j_1=2$ and $q\equiv -1+2^{\,i_0}c$, $c$ odd .
	\end{enumerate}
	\end{proposition}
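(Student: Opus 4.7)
The plan is to mirror the approach of \Cref{thm:character_sum_vanish}, combining it with the $2$-adic trace analysis furnished by \Cref{lemma_trace_zero 2}. The key structural observation is that, via the Chinese Remainder Theorem, $(\mathbb{Z}/2^{j_1}p^{j_2}\mathbb{Z})^\times \cong (\mathbb{Z}/2^{j_1}\mathbb{Z})^\times \times (\mathbb{Z}/p^{j_2}\mathbb{Z})^\times$, with $\xi_{2^{j_1}p^{j_2}}^{\,x} = \xi_{2^{j_1}}^{\,x}\,\xi_{p^{j_2}}^{\,x}$ for a compatible choice of roots of unity and $o_{2^{j_1}p^{j_2}}(q) = \lcm(o_{2^{j_1}}(q),\,o_{p^{j_2}}(q))$. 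Setting $L := o_{2^{j_1}p^{j_2}}(q)$, the target sum becomes
\[
S \;=\; \sum_{t=0}^{L-1} \xi_{2^{j_1}}^{\,kq^t}\,\xi_{p^{j_2}}^{\,kq^t}.
\]
As in the proof of \Cref{thm:character_sum_vanish}, I would parametrize the orbit $\{q^t \bmod 2^{j_1}p^{j_2}\}$ by writing $t = \lambda u + v$, where $\lambda$ is chosen so that $q^\lambda \equiv 1 \pmod{2^{i_0} p^{i_0^{(p)}}}$ and $v$ runs over a suitable small index set; the $p$-adic and (signed) $2$-adic expansions $q^\lambda = 1 + p^{\,i_0^{(p)}}(\cdot)$ and $q = \pm 1 + 2^{i_0}c$ then yield explicit decompositions of the form $q^t \equiv q^v + 2^{i_0}p^{\,i_0^{(p)}}w \pmod{2^{j_1}p^{j_2}}$.

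Next I would split into the three sufficient subcases. When $j_2 > i_0^{(p)}$, the inner sum factors through a geometric series in $\xi_{p^{j_2}}^{\,k p^{\,i_0^{(p)}}}$, whose base is a nontrivial $p^{\,j_2 - i_0^{(p)}}$-th root of unity, exactly as in Case~(i) of \Cref{thm:character_sum_vanish}, and vanishes. When $j_1 > i_0$, the analogous geometric sum in $\xi_{2^{j_1}}^{\,k 2^{i_0}}$ vanishes because by \Cref{lemma_trace_zero 2} the base has nontrivial order $2^{\,j_1-i_0}$. In the remaining subcase $j_1 = 2$ with $q \equiv -1 + 2^{i_0}c$, one has $q \equiv -1 \pmod 4$ and $o_4(q) = 2$, so pairing the diagonal indices $t$ and $t + L/2$ (chosen to preserve the $p$-adic residue while flipping sign modulo $4$) produces the collapse $\xi_4^{\,k} + \xi_4^{\,-k} = 0$ in every paired contribution. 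For the converse, when none of (1)--(3) holds, I would invoke \Cref{lemma_trace_zero 2} and Case~(iii) of \Cref{thm:character_sum_vanish} to conclude that both the $2$-adic and $p$-adic cyclotomic traces are individually nonzero, and then argue that the diagonal sum $S$ cannot vanish using the linear disjointness of the $2$-primary and $p$-primary cyclotomic extensions of $\mathbb{F}_q$, together with the fact that $S$ is a Galois trace of the primitive element $\xi_{2^{j_1}p^{j_2}}^{\,k}$.

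The principal obstacle is the subcase $j_1 = 2$ with $q \equiv -1 + 2^{i_0}c$: unlike the first two cases, which reduce to clean geometric-series collapses, here the vanishing mechanism is an involution-type cancellation $\xi_4 + \xi_4^{-1} = 0$ that must be shown to survive once coupled diagonally with the nontrivial $p$-component. One must verify that the parity structure of the diagonal orbit aligns with that of the $2$-component orbit, so that pairing indices produces complementary $2$-adic residues while keeping the $p$-adic factors identical. A secondary subtlety is that the joint orbit is not generally the full direct product of component orbits whenever $\gcd(o_{2^{j_1}}(q),\,o_{p^{j_2}}(q)) > 1$; this forces a coset-by-coset bookkeeping in the factorization of $S$ rather than a plain product decomposition, and this bookkeeping is what makes the argument heavier than its single-prime analogues.
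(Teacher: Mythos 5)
Your cases (1) and (2) follow exactly the route the paper intends (the paper in fact gives no proof of this proposition at all, only the remark that the argument of \Cref{thm:character_sum_vanish} extends via \Cref{lemma_trace_zero 2}), and the geometric-series collapses there are sound. The genuine gap is in case (3), and it is not merely the verification you postponed. Your pairing $t\mapsto t+L/2$, $L:=o_{2^{j_1}p^{j_2}}(q)$, fixes the $p$-component and flips the sign modulo $4$ only if $q^{L/2}\equiv 1\pmod{p^{j_2}}$ and $q^{L/2}\equiv-1\pmod{4}$, i.e.\ only if the residue which is $-1$ mod $4$ and $1$ mod $p^{j_2}$ lies in the cyclic group $\langle q\rangle \bmod 4p^{j_2}$; this happens precisely when $o_{p^{j_2}}(q)$ is odd (so that $L=2\,o_{p^{j_2}}(q)$). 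If $o_{p^{j_2}}(q)$ is even, then $L=o_{p^{j_2}}(q)$ and $q^{L/2}\equiv-1\pmod{p^{j_2}}$, so the required involution does not exist inside the orbit; writing $S=\xi_4^{k}A+\xi_4^{-k}B=\xi_4^{k}(A-B)$, with $A$ (resp.\ $B$) the sub-sum over even (resp.\ odd) $t$, vanishing amounts to the twisted sum $A-B=\sum_t(-1)^t\xi_{p^{j_2}}^{kq^t}$ being zero, which in general it is not. Concretely, take $p=5$, $q=19=-1+2^2\cdot 5$ (so $i_0=2$, $i_0^{(5)}=1$), $j_1=2$, $j_2=1$, $k=1$: then $o_{20}(19)=2$ and the sum is $\xi_{20}+\xi_{20}^{-1}$, which cannot vanish in $\mathbb{F}_{19^2}$ since that would force $\xi_{20}^{2}=-1$, impossible for an element of order $20$. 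So in the subcase $o_{p^{j_2}}(q)$ even your mechanism cannot be repaired, and indeed the vanishing asserted in (3) fails there; case (3) needs the extra hypothesis that $o_{p^{j_2}}(q)$ be odd — the analogue of the hypothesis $p_1\nmid(p_2-1)$ in \Cref{thm:character_sum_vanish}, which has no automatic counterpart when one prime is $2$ because $\varphi(p^{j_2})$ is always even. Under that hypothesis your pairing argument is exactly right.

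The converse direction has the same soft spot. Linear disjointness of $\mathbb{F}_q(\xi_{2^{j_1}})$ and $\mathbb{F}_q(\xi_{p^{j_2}})$ over $\mathbb{F}_q$ holds if and only if $\gcd\bigl(o_{2^{j_1}}(q),\,o_{p^{j_2}}(q)\bigr)=1$, and precisely in the delicate regime ($q\equiv-1\pmod 4$, $3\le j_1\le i_0$, $o_{p^{j_2}}(q)$ even) both orders are even, so ``both component traces nonzero $\Rightarrow$ the diagonal sum is nonzero'' does not follow from disjointness; one must rule out $\xi_{2^{j_1}}^{2k}=-B/A$ by the coset-by-coset bookkeeping you mention but never carry out. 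As written, then, your proposal establishes (1) and (2), while the sufficiency of (3) (false as stated without a further hypothesis) and the necessity direction remain unproven; since the paper states the proposition ``without details,'' there is no model proof to lean on, but the missing content — how the two component orbits interact through their common factor $2$ — is exactly the substance of the mixed case.
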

\begin{remark}
	\begin{itemize}
	\item[(i)] Let 	$D_{2n} = \langle a, b~ | ~a^n = b^2 = 1, \; b^{-1}ab = a^{-1} \rangle,$ of order $2n$, where $n \geq 3.$\\
		A complete list of strong Shoda pairs of $G:=D_{2n}$ is given by  	\[
	%	\mathcal{S}(G) :=
%		\begin{cases}
%			\{(D_{2n}, D_{2n}), \ (D_{2n}, \langle a \rangle), \ (\langle a \rangle, \langle a^{v} \rangle) \mid v \neq 1, v \mid n, \  \}, & \text{if $n$ is odd}, \\[6pt]
%			\{(D_{2n}, D_{2n}), \ (D_{2n}, \langle a \rangle), \ (D_{2n}, \langle a^2, b \rangle), \ (D_{2n}, \langle a^2, ab \rangle), \ (\langle a \rangle, \langle a^{v} \rangle) ~|~  v >2, v \mid n\}, & \text{if $n$ is even}.
%		\end{cases}
%		\]
	%	\[
		\mathcal{S}(G) :=
		\begin{cases}
			\{(G, G), \ (G, \langle a \rangle), \ (\langle a \rangle, \langle a^{v} \rangle) \mid v \neq 1, \ v \mid n \}, & \text{if $n$ is odd}, \\[6pt]
			\{(G, G), \ (G, \langle a \rangle), \ (G, \langle a^2, b \rangle), \ (G, \langle a^2, ab \rangle), \ (\langle a \rangle, \langle a^{v} \rangle) \mid v > 2, \ v \mid n\}, & \text{if $n$ is even}.
		\end{cases}
		\]
		
		Thus, using \Cref*{thm:character_sum_vanish} and \Cref{thm:char_sum_vanish_2p_refined}, one can extend the computations of~\cite[Section~4]{CM25} to the case where $n$ is the product of two distinct primes, which can further be extended to the case arbitrary $n$ and $q$ such that $n$ and $q$ are arbitrary. Thereby, extending \cite{DFPM09, GR22b, GR22a, GR23, Mar15}.
		
			\item[(ii)]Likewise for $Q_{4m} = \langle a, b \mid a^{2m} = 1, \; b^2 = a^{m},  b^{-1}ab = a^{-1} \rangle$, the generalized quaternion group, 	of order $4m$,  $m \geq 2$, similar results may be obtained using 
		a complete list of strong Shoda pairs of $G:=Q_{4m}$  given by  
%		\[
%	\mathcal{S}(Q_{4m}) :=
%		\begin{cases}
%			\{(Q_{4m}, Q_{4m}), \ (Q_{4m}, \langle a \rangle), \ (Q_{4m}, \langle a^2 \rangle), \ (\langle a \rangle, \langle a^{v} \rangle) |  v >2, ~v \mid 2m, \ \}, ~ \text{if $m$ is odd}, \\[6pt]
%			\{(Q_{4m}, Q_{4m}), \ (Q_{4m}, \langle a \rangle), \ (Q_{4m}, \langle a^2, b \rangle), \ (Q_{4m}, \langle a^2, ab \rangle), \ (\langle a \rangle, \langle a^{v} \rangle) |v>2, v \mid 2m,   \}, ~ \text{if $m$ is even}.
%		\end{cases}
%		\]
		\[
		\mathcal{S}(G) :=
		\begin{cases}
			\{(G, G), \ (G, \langle a \rangle), \ (G, \langle a^2 \rangle), \ (\langle a \rangle, \langle a^{v} \rangle) \mid v > 2, \ v \mid 2m \}, & \text{if $m$ is odd}, \\[6pt]
			\{(G, G), \ (G, \langle a \rangle), \ (G, \langle a^2, b \rangle), \ (G, \langle a^2, ab \rangle), \ (\langle a \rangle, \langle a^{v} \rangle) \mid v > 2, \ v \mid 2m \}, & \text{if $m$ is even}.
		\end{cases}
		\]
		
	Consequently, this generalizes the study of quaternion 
	 codes of order $4m$ to arbitrary $m$ (see~\cite{GY21}).
	\end{itemize}
\end{remark}

\begin{example}\label{example_dihedral_central}
	\underline{$\mathbb{F}_5D_{12}, D_{12}= \langle a, b \mid a^{6}=b^2=1,\; a^b=a^{-1}\rangle.$} \\
Set $
	e :=e_C(D_{12}, D_{12}, \langle a^2, ab \rangle) \;+\; \widehat{b} e_C(D_{12}, \langle a \rangle, \langle 1 \rangle).
$
	Then the group code $\mathbb{F}_5 D_{12}\, e$ has parameters $[12,3,8]$, which is best known.
\end{example}
\begin{remark}
Let $G_1$ and $G_2$ be two groups with $\gcd(|G_1|, |G_2|)=1$. If $(H_1, K_1)$ and $(H_2, K_2)$ are respectively the strong Shoda pairs of $G_1$ and $G_2$, then one can  check that 
 $(H_1 \times H_2, K_1 \times K_2)$ is a strong Shoda pair of $G_1 \times G_2$. Consequentely, $\mathcal{S}(G_1 \times G_2)$ is obtainable from $\mathcal{S}(G_1)$ and $\mathcal{S}(G_2)$. This enables us to work with groups of order divisible by more than two primes.
\end{remark}

We illustrate the above discussion with an explicit example.
\begin{example}\label{example:G1xG2}
	Consider the groups
	$G_1 \;=\; \langle a_1,\,b_1 \mid a_1^{7^3}=1,\; b_1^{3}=1,\; b_1a_1b_1^{-1}=a_1^{18}\rangle$ and $G_2 \;=\; \langle a_2,\,b_2 \mid a_2^{11}=1,\; b_2^{5}=1,\; b_2a_2b_2^{-1}=a_2^{4
	}\rangle.$
		Here, $	
	\mathcal{S}(G_1)\;=\;\{(G_1,G_1),\ (G_1,\langle a_1\rangle),\ (\langle a_1\rangle,\langle a_1^{7^{j_1}}\rangle)_{j_1=1}^{3}\}$ and $	\mathcal{S}(G_2)\;=\;\{(G_2,G_2),\ (G_2,\langle a_2\rangle),\ (\langle a_2\rangle,\langle 1\rangle)\}.$
	Thus for $G=G_1 \times G_2$, the strong Shoda pairs of $G$ are listed in \Cref*{tab:pcis_G1xG2} along with the  expressions of pcis in $\mathbb{F}_2G$ computed  using \Cref*{thm:character_sum_vanish}. 
	\begin{center}
		\begin{table}[ht]
			\centering
			\begin{tabular}{|c|l|}
				\hline
				\textbf{Strong Shoda Pair $(H,K)$} & \textbf{Primitive central idempotent $e$ in $\F_2[G]$} \\
				\hline
				
				$(G,G)$ & \(\displaystyle e_0=\widehat{G}.\) \\[6pt]
				\hline
				
				$(G,\ \langle a_1\rangle\times G_2)$ & 
				\(
				\displaystyle e_{1,k} \;=\; \frac{1}{3}\,\widehat{\langle a_1\rangle\times G_2}\;
				\sum_{i=0}^{2}\tr\!\big(\xi_{3}^{k i}\big)\,(b_1,  1  )^{-i}
				\)
				
				\\
				\hline
				
				$(G,\ G_1\times\langle a_2\rangle)$ &
				\(
				\displaystyle e_{2,k} \;=\; \frac{1}{5}\,\widehat{G_1\times\langle a_2\rangle}\;
				\sum_{i=0}^{4}\tr\!\big(\xi_{5}^{k i}\big)\,(1, b_2)^{-i},
				\)
				\\[10pt]
				\hline
				
				$(G,\ \langle a_1\rangle\times\langle a_2\rangle)$ &
				\(
				\displaystyle e_{3,k}
				= \frac{1}{15}\,\widehat{\langle a_1\rangle\times\langle a_2\rangle}
				\sum_{i=0}^{14}
				\tr\!\big(\xi_{15}^{k i}\big)
				(b_1, b_2)^{-i},
				\)
				\\[12pt]
				\hline
				
				$(\langle a_1\rangle\times G_2,  \langle a_1^{7^{j_1}}\rangle)\times G_2 )_{j_1=1}^{3}$ &
				\(
				\displaystyle e_{4,k}^{j_1} \;= \frac{1}{7^{j_1}}\widehat{ \langle a_1^{7^{j_1}}\rangle)\times G_2}\sum_{i=0}^{6}\tr\!\big(\xi_{7^{j_1}}^{7^{j_1-1}ki}\big)\,(a_1, 1 )^{-i7^{j_1-1}}
				\)
				\\[8pt]
				\hline
				$(\langle a_1\rangle\times G_2,  \langle a_1^{7^{j_1}}\rangle)\times \langle a_2 \rangle )_{j_1=1}^{3}$ &
				\(
				\displaystyle e_{5,k}^{j_1} \;= \frac{1}{7^{j_1}}\widehat{\langle a_1^{7^{j_1}}\rangle)\times \langle a_2 \rangle}\sum_{i=0}^{6}\tr\!\big(\xi_{5\cdot7^{j_1}}^{7^{j_1-1}ki}\big)\,(a_1, b_2 )^{-i7^{j_1-1}}
				\)
				\\ \hline

				$(G_1\times\langle a_2\rangle, G_1\times \langle 1 \rangle)$ &
				\(
				\displaystyle e_{6,k} \;=\; \frac{1}{11}\widehat{G_1}\sum_{i=0}^{10}\tr\!\big(\xi_{11}^{ki}\big)(a_2)^{-i},
				\)
				\\[8pt]
				\hline
				$(G_1\times\langle a_2\rangle, \langle a_1 \rangle \times \langle 1 \rangle )$ &
				\(
				\displaystyle e_{7,k} \;=\; \frac{1}{33}\sum_{i=0}^{32}\tr\!\big(\xi_{33}^{ki}\big)(b_1, a_2)^{-i},
				\)
				\\[8pt]
				\hline
				
				$(\langle a_1\rangle\times\langle a_2\rangle, \langle a_1^{7^{j_1}}\rangle \times \langle 1 \rangle)_{j_1=1}^{3}$ &
				\(
				\displaystyle e_{8,k}^{j_1} \;= \frac{1}{7^{j_1}\cdot 11}\widehat{\langle a_1^{7^{j_1}}\rangle}
				\sum_{i=0}^{6}
				\tr\!\big(\xi_{7^{j_1}\cdot 11}^{i7^{j_1-1}k}\big)(a_1, a_2)^{-i7^{j_1-1}}.
				\)
				\\[12pt]
				\hline
					\end{tabular}
			\caption{Pcis of $\mathbb{F}_2(G_1 \times G_2)$.}
			\label{tab:pcis_G1xG2}
		\end{table}
			\end{center}
	Further, the codes generated by the pcis  corresponding to strong Shoda pairs $(H, K)$, where $H$ is proper subgroup of $G$ are provided in \Cref*{tab:params_G1xG2_noncentral}.
	\begin{table}[ht]
		\centering
		\setlength{\tabcolsep}{8pt}
		\renewcommand{\arraystretch}{1.2}
		\begin{tabular}{|l|c|c|c|}
			\hline
			\textbf{Strong Shoda pair $(H,K)$} & \textbf{Idempotent} & \textbf{Dimension $k$} & \textbf{Distance $d$} \\ \hline
			$(\langle a_1\rangle\times G_2,\ \langle a_1^{7^{j_1}}\rangle\times G_2)_{j_1=1}^{3}$ 
			& $e_{4,k}^{j_1}$ & $ 9\cdot 7^{j_1-1}$ & $110 \cdot 7^{3-j_1}  \leq d \leq 330 \cdot 7^{3-j_1}$\\ \hline
			$(\langle a_1\rangle\times G_2,\ \langle a_1^{7^{j_1}}\rangle\times\langle a_2\rangle)_{j_1=1}^{3}$ 
			& $e_{5, k}^{j_1}$ & $36\cdot 7^{j_1-1}$ & $22 \cdot 7^{3-j_1} \leq d \leq 66\cdot 7^{3-j_1}$ \\ \hline
			$(G_1\times\langle a_2\rangle,\ G_1\times \langle 1\rangle)$ 
			& $e_{6, k}$ & $50$ & $6\cdot 7^3 \leq d \leq 10\cdot 7^3$ \\ \hline
			$(G_1\times\langle a_2\rangle,\ \langle a_1\rangle\times \langle 1\rangle)$ 
			& $e_{7, k}$ & $50$ & $2\cdot 7^3 \leq d \leq 33\cdot 7^3$ \\ \hline
			$(\langle a_1\rangle\times\langle a_2\rangle,\ \langle a_1^{7^{j_2}}\rangle\times \langle 1\rangle)_{j_1=1}^{3}$ 
			& $e_{8, k}^{j_1}$ & $450\cdot 7^{j_1-1}$ & $2\cdot 7^{3-j_1} \leq d \leq 6\cdot 7^{3-j_1}$ \\ \hline
		\end{tabular}
		\caption{Parameters for codes corresponding to pcis of $\mathbb{F}_2(G_1 \times G_2).$}
		\label{tab:params_G1xG2_noncentral}
	\end{table}
	
\end{example}
\section{Some Non-Central Good Codes}
In this section, we aim to construct non-central codes arising from the central ones.  As observed in \cite{CM25}, non-central codes often exhibit better distance parameters and they are not always equivalent to abelian codes. Since such codes correspond to left ideals, we need to consider left idempotents. In view of \Cref{left pcis F_qD_{2^{n+1}}}, \Cref{left pcis F_qG_{2^{n+1}}}, and \Cref{left pcis F_qG_{p^{n+1}}}, it is sufficient to consider left group codes generated by idempotents of the form 
$e_C(G, H, K)\widehat{\langle b \rangle}$,
where \( H \) is a proper subgroup of the metacyclic group \( G \). Since the construction of these codes essentially depends on explicit expressions of the idempotents, we perform the computations for the idempotents \( e_{p_1^{j_1}, k_1}\in \mathbb{F}_qG \), which were obtained in our earlier work~\cite{CM25} for $G$ as in (\ref{equation 3}). We first consider codes generated by 
$\mathbb{F}_q G e_{p_1^{j_1}, k_1} \widehat{\langle b^\beta \rangle}, \quad 1 \leq \beta \leq p_2^l - 1.$
\begin{theorem}\label{thm:noncentral-code}
	Let $C=\mathbb{F}_q G e_{p_1^{j_1}, k_1}\widehat{\langle b^\beta \rangle}$ be a non-central code.  
	If $\dim$ and $d$ respectively denote its dimension and minimum distance, then
	\[
	\dim = o_{p_1^{j_1}}(q)\, p_2^{\lambda+\lambda_0},
	\]
	and
	\[
	d=
	\begin{cases}
		2 p_1^{m-j_1}\, p_2^{\,l-\lambda} \;\leq d \leq\; p_1^m \, p_2^{\,l-\lambda}, & 1 \leq j_1 \leq i_0^{(1)}, \\[4pt]
		2 p_1^{m-j_1} p_2^{\,l-\lambda} \;\leq d \leq\; p_1^{\,m-j_1+i_0^{(1)}} \, p_2^{\,l-\lambda}, & \text{otherwise},
	\end{cases}
	\]
	where $\gcd(\beta, p_2^l)=p_2^\lambda$ and $\gcd(\omega_0, p_2^l)=p_2^{\lambda_0}$.
\end{theorem}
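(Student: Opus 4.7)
The plan is to establish the dimension and the two distance bounds separately, following the template of \Cref{parmameters $G=H$} and \Cref{Cor_distance_{M_{2^{n+1}}}}, and building on the explicit expression of $e := e_{p_1^{j_1},k_1}$ from~\cite{CM25}.

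For the dimension, I would interpret $C$ as a principal left ideal of the simple algebra $\mathbb{F}_qGe$ generated by the idempotent $e\widehat{L}$, where $L := \langle b^\beta\rangle$ (note that $e$ is central, so $e\widehat{L} = \widehat{L}e$ is indeed an idempotent). Using \Cref{equation 2} with $H = \langle a\rangle$, $K = \langle a^{p_1^{j_1}}\rangle$, together with the stabilizer computation $E\cap\langle b\rangle = \langle b^{\omega_0}\rangle$ (coming from the fact that $b^s$ preserves the cyclotomic class $C_q(\gamma^{k_1})$ exactly when $r^s \in \langle q\rangle \bmod p_1^{j_1}$), one obtains $\dim_{\mathbb{F}_q}\mathbb{F}_qGe = o_{p_1^{j_1}}(q)\,p_2^{l+\lambda_0}$. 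The dimension of the principal left ideal $\mathbb{F}_qGe\widehat{L}$ then equals this $\dim\mathbb{F}_qGe$ scaled by $\mathrm{rank}(\widehat{L})/[G:H]$ in the matrix representation; I would show this rank equals $p_2^\lambda$ by arguing that $\widehat{L}$ projects the $[G:H]$-dimensional simple module onto its $L$-invariants, whose dimension is $[G:LH] = p_2^\lambda$ (since $L\cap H = \{1\}$ and $|LH| = p_1^m p_2^{l-\lambda}$). Multiplying yields $\dim C = o_{p_1^{j_1}}(q)\,p_2^{\lambda+\lambda_0}$.

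For the distance lower bound, the key observation is that $e = \widehat{K}e$, which holds because $K$ is normal in $G$ and $\widehat{K}\epsilon_C(H,K) = \epsilon_C(H,K)$. Thus $C \subseteq \mathbb{F}_qG\widehat{KL}$, where $KL = \langle a^{p_1^{j_1}}, b^\beta\rangle$ is a subgroup of order $p_1^{m-j_1}p_2^{l-\lambda}$. If some nonzero codeword had support in a single coset of $KL$, say $\alpha t\widehat{KL}\in C$, then the left-ideal property of $C$ would produce $\widehat{KL}\in C$, hence $\mathbb{F}_qG\widehat{KL}\subseteq C$; but $\dim\mathbb{F}_qG\widehat{KL} = p_1^{j_1}p_2^\lambda$ strictly exceeds $\dim C = o_{p_1^{j_1}}(q)\,p_2^{\lambda+\lambda_0}$ (indeed $o_{p_1^{j_1}}(q)\cdot\omega_0 = |\langle q,r\rangle|$ in $(\mathbb{Z}/p_1^{j_1})^*$, which is at most $\phi(p_1^{j_1}) < p_1^{j_1}$). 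Hence every nonzero codeword meets at least two cosets of $KL$, giving $d\geq 2p_1^{m-j_1}p_2^{l-\lambda}$. For the upper bound, I would compute $\mathrm{wt}(e\widehat{L})$ directly from the explicit form of $e$ in~\cite{CM25}: $|\mathrm{supp}(e)|$ equals $p_1^m$ when $1\leq j_1\leq i_0^{(1)}$ and $p_1^{m-j_1+i_0^{(1)}}$ otherwise. Since $\mathrm{supp}(e)\subseteq\langle a\rangle$, $\mathrm{supp}(\widehat{L}) = L\subseteq\langle b\rangle$, and $\langle a\rangle\cap\langle b\rangle=\{1\}$, all products $xy$ with $x\in\mathrm{supp}(e)$ and $y\in L$ are pairwise distinct in $G$; therefore $\mathrm{wt}(e\widehat{L}) = |\mathrm{supp}(e)|\cdot|L| = \mathrm{wt}(e)\cdot p_2^{l-\lambda}$, matching the two stated upper bounds.

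The principal obstacle is justifying the $p_2^\lambda$-rank claim for $\widehat{L}$ in the matrix algebra $\mathbb{F}_qGe$, since $L$ interacts simultaneously with the permutation action of $\langle b\rangle$ on the cosets $G/H$ and with the Galois descent encoded by the inertia subgroup $E$. An alternative route, avoiding matrix-algebra machinery altogether, would construct an explicit $\mathbb{F}_q$-basis of $C$ of the form $\{a^{\eta_1}b^{\eta_2}e\widehat{L}\}$ with carefully chosen index ranges of combined cardinality $o_{p_1^{j_1}}(q)\,p_2^{\lambda+\lambda_0}$, and verify linear independence via the polynomial-degree argument employed in the proof of \Cref{parmameters $G=H$}.
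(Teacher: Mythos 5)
Your proposal is correct in substance, and its distance part is essentially the paper's argument: the paper merely says ``by the method of \Cref{parmameters $G=H$}'', and what you write out --- $e=\widehat{K}e$ with $K=\langle a^{p_1^{j_1}}\rangle$, hence $C\subseteq \mathbb{F}_qG\widehat{KL}$ with $KL=\langle a^{p_1^{j_1}},b^{\beta}\rangle$, single-coset support forcing $\mathbb{F}_qG\widehat{KL}\subseteq C$ and a contradiction (your dimension count $o_{p_1^{j_1}}(q)\,\omega_0\le \phi(p_1^{j_1})<p_1^{j_1}$ is a valid instantiation of the paper's contradiction), plus $d\le \operatorname{wt}(e\widehat{L})=\operatorname{wt}(e)\,|L|$ for the upper bound --- is exactly the intended filling-in. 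Where you genuinely diverge is the dimension. The paper splits into the cases $r\in\langle q\rangle$ and $r\notin\langle q\rangle$ and exhibits explicit $\mathbb{F}_q$-bases ($\{a^{\eta_a}b^{\eta_b}f\}$, respectively $\{a^{\eta_a}b^{\eta_b}\epsilon_C(\langle a\rangle,\langle a^{p_1^{j_1}}\rangle)^{t}\widehat{\langle b^{\beta}\rangle}\}$ with $t$ running over a transversal of $\langle a,b^{\omega_0}\rangle$ in $G$), counting $o_{p_1^{j_1}}(q)\,p_2^{\lambda}$ and $o_{p_1^{j_1}}(q)\,p_2^{\lambda+\lambda_0}$ elements. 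You instead read the dimension off the Wedderburn component $M_{[G:H]}(\mathbb{F}_{q^{o/[E:H]}})$ together with the rank of $\widehat{L}$ in that matrix algebra; this is uniform in the two cases and makes the appearance of $p_2^{\lambda_0}=[E\cap\langle b\rangle$-index data$]$ conceptually transparent, at the price of the rank computation you flag. That step is not a real obstruction: since the simple module of this component is induced from $\langle a\rangle$ and $G=\langle a\rangle\langle b\rangle$ with $\langle a\rangle\cap\langle b\rangle=1$, Mackey restriction shows its restriction to $\langle b\rangle$ is the regular representation, so $\widehat{L}$ projects onto the $L$-fixed vectors of dimension $[\langle b\rangle:L]=p_2^{\lambda}$ (and the rank is unchanged under scalar extension from $\mathbb{F}_{q^{o/[E:H]}}$). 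Your fallback of building a basis and using the polynomial-degree independence argument is precisely the paper's route.

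Two minor caveats. First, your claim that $|\operatorname{supp}(e)|$ \emph{equals} $p_1^{m}$ (resp.\ $p_1^{m-j_1+i_0^{(1)}}$) is an overstatement when $r\notin\langle q\rangle$, since the orbit sums of traces occurring as coefficients could in principle vanish for some exponents (as happens in the dihedral $2$-group case); only the inequality $|\operatorname{supp}(e)|\le p_1^{m}$ (resp.\ $\le p_1^{m-j_1+i_0^{(1)}}$) is needed, and that is immediate from the explicit form of $e$, so the stated upper bounds are unaffected. Second, in the coset argument make sure to note $\widehat{K}\widehat{L}=\widehat{KL}$ (valid because $K\trianglelefteq G$ and $K\cap L=1$) and that coefficients of an element of $\mathbb{F}_qG\widehat{KL}$ are constant on left cosets of $KL$; with these observations your lower bound $d\ge 2p_1^{m-j_1}p_2^{\,l-\lambda}$ is complete.
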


\begin{proof}
	Let $f := e_{p_1^{j_1}, k_1}\widehat{\langle b^\beta \rangle}$ and $o := o_{p_1^{j_1}}(q)$.  
	
	Case 1: $r \in \langle q \rangle$.
	We claim that
	\[
	\mathcal{B} := \{\, a^{\eta_a} b^{\eta_b} f \;\mid\; 0 \leq \eta_a < o,\; 0 \leq \eta_b < p_2^\lambda \,\}
	\]
	is a basis for the left ideal $\mathbb{F}_q G f$.  	Indeed, for any $\alpha \in \mathbb{F}_q G f$, write $\alpha = \alpha' \widehat{\langle b^\beta \rangle}$ with  
	\[
	\alpha' = \sum_{\eta_b=0}^{p_2^l-1} \sum_{\eta_a=0}^{o-1} 
	\alpha_{\eta_a\eta_b}\, a^{\eta_a} b^{\eta_b} e_{p_1^{j_1}, k_1}.
	\]
	Partitioning $\eta_b$ by a transversal $T$ of $\langle b^\beta \rangle$ in $\langle b\rangle$ shows that $\mathcal{B}$ spans $\mathbb{F}_q G f$.  
	Linear independence follows from the independence of $\{a^{\eta_a} e_{p_1^{j_1},k_1}\}_{0\leq \eta_a<o}$.  
	Thus $|\mathcal{B}| = o\, p_2^\lambda$, giving $\dim = o\, p_2^\lambda$.  
	
	Case 2: $r \notin \langle q \rangle$. 
	By the same argument as in \cite[Theorem~3.5]{CM25}, the $\mathbb{F}_q$ basis of $\mathbb{F}_qGe_{p_1^{j_1},k_1}$ is
	\[
	\mathcal{B}' = \{\, a^{\eta_a} b^{\eta_b} \, \epsilon_C(\langle a \rangle,\langle a^{p_1^{j_1}}\rangle)^t \widehat{\langle b^\beta \rangle} 
	\;\mid\; 0\leq \eta_a<o,\; 0\leq \eta_b < p_2^\lambda,\; t\in \tau \,\},
	\]
	where $\tau$ is a transversal of $\langle a, b^{\omega_0}\rangle$ in $G$.  
	Hence $\dim = o\, p_2^{\lambda+\lambda_0}$.
	
For the distance bound, let $K = \langle a^{p_1^{j_1}} \rangle$ and $N = \langle b^\beta \rangle$, so that $NK \leq G$. By the method of \Cref{parmameters $G=H$}, the code parameters satisfy the stated bounds.

\end{proof}
\subsection*{Non-central codes using units}
We first recall some well known units of integral group ring  $\mathbb{Z}G$ which remain units in semisimple group ring  $\mathbb{F}_q G$  (for instance see \cite{DJ15}, 1.2.4).
\medskip

\noindent
\textbf{Bicyclic units.} For $g,h \in G$, with $\tilde{h}=1+h+\cdots+h^{|h|-1}$, define
\[
b(g, \tilde{h}) = 1 + (1 - h)g\tilde{h}, 
\quad 
b(\tilde{h}, g) = 1 + \tilde{h}g(1 - h).
\]
These are units with inverses
$b(g, \tilde{h})^{-1} = b(-g, \tilde{h})$ and $b(\tilde{h}, g)^{-1} = b(\tilde{h}, -g)$.

\medskip

\noindent 
\textbf{Bass units.} Consider $x \in G$, an element of order $n$. Let $k$ be relatively coprime to  $ n$ and let $m>0$ be such that $k^m \equiv 1 \mod{n}$. Then
\[
u_{k,m}(x) = \Big(1 + x + \cdots + x^{k-1}\Big)^m + \frac{1-k^m}{n}\,\tilde{x}
\]
is invertible, with inverse $u_{l,m}(x^k)$ where $kl \equiv 1 \mod{n}$.\\
\textbf{Alternating units.} Let $g \in G$, an element of odd order $n$ and let $k$ coprime to $2n$. Then,$$u_k(g) := 1 + g + g^2 + \cdots + g^{k-1}$$ is a  unit.  Consider $u_{k}(g)$ in  $\mathbb{F}_q G$ where $q$ is a power of $2$
and  $g \in G$ be an element of order  $p^m$, $p\neq 2$. If $k <p$ and let $k_1 \in \mathbb{Z}_{>0}$ is such that 
\[
k k_1 \equiv 1 \mod{p^m},
\] then
\[
u_k(g)^{-1} =
\begin{cases}
	u_{k_1}(g^k), & \text{if } k_1 \text{ is odd}, \\
	u_{k_1}(g^k) + \tilde{g}, & \text{if } k_1 \text{ is even}.
\end{cases}
\] This is because
$u_k(g) \cdot u_{k_1}(g^k) = \left( \sum_{i=0}^{k-1} g^i \right) \left( \sum_{j=0}^{k_1 - 1} g^{kj} \right) = \sum_{t=0}^{k k_1 - 1} g^t.$
Since \( k k_1 \equiv 1 \mod{p^m} \) and \( g \) has order \( p^m \). Thus, the sum above is over \( k k_1 \equiv 1 \mod{p^m} \) consecutive powers of \( g \), which implies
\[
u_k(g) \cdot u_{k_1}(g^k) =
\begin{cases}
	1, & \text{if } k_1 \text{ is odd}, \\
	1 + \tilde{g}, & \text{if } k_1 \text{ is even}.
\end{cases}
\]
Reconsidering $G$ as in (\ref{equation 3}) and its idempotents $e:=e_{p_1^{j_1}, k_1}$ as  given in (\cite{CM25}, Theorem 3.3), we have that, if $u \in \mathbb{F}_q G e$ is a unit, then 
$ue\widehat{\langle b^\beta \rangle}u^{-1}
$ yields non-central idempotents since conjugation is an automorphism. It has been observed that suitable choices of units can improve the parameters of such codes \cite{APM19}. Futher, 
$e + s\,\widehat{\langle b \rangle}a^k (1 - \widehat{\langle b \rangle})e$ is a unit with inverse
$e - s\,\widehat{\langle b \rangle}\,a^k (1 - \widehat{\langle b \rangle})\, e$
where,  $s \in \mathbb{F}_q \setminus \{0\}$ and  $1 \leq k \leq o(a) - 1$, $o(a)$ being the order of $a$.\\
For any unit $u$ defined above,
we denote $u^{-1}e\widehat{\langle b^\beta \rangle}  u$ by $ e^{\beta u}$ and $e \widehat{\langle b^\beta \rangle} $ by $ e^{\beta}$. In the ongoing notation we have the following corollary to \Cref{thm:noncentral-code}.
\begin{corollary}The dimension and distance of $\mathbb{F}_qGe^{\beta u}$ satisfy the following:
	\begin{enumerate}
	\item $\dim_{\mathbb{F}_q}(\mathbb{F}_qG e^{\beta u})=o_{p_1^{j_1}}(q) \cdot p_2^{\lambda + \lambda_0}
	$
	\item  
	$d(\mathbb{F}_qGe^{\beta }) \leq d(\mathbb{F}_qGe^{\beta u}).$
	
\end{enumerate}   	
\end{corollary}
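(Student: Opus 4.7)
The plan is to handle the two assertions separately. The dimension equality is a clean module-isomorphism argument, while the distance inequality is the substantial part and hinges on controlling how right multiplication by $u$ affects supports.

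For the dimension, the plan is to show that right multiplication by $u$ yields an $\mathbb{F}_qG$-module isomorphism $\mathbb{F}_qGe^\beta \to \mathbb{F}_qGe^{\beta u}$. Since $u$ is a unit of $\mathbb{F}_qG$ (or extends to one by adjoining $1-e$ when $u$ is a unit of $\mathbb{F}_qGe$), we have $\mathbb{F}_qGu^{-1}=\mathbb{F}_qG$, and therefore
\[
(\mathbb{F}_qGe^\beta)\,u \;=\; \mathbb{F}_qG\,e^\beta u \;=\; \mathbb{F}_qG\,u^{-1}e^\beta u \;=\; \mathbb{F}_qG\,e^{\beta u}.
\]
Right multiplication by $u$ is an $\mathbb{F}_q$-linear bijection of $\mathbb{F}_qG$ onto itself that is simultaneously left $\mathbb{F}_qG$-linear, so its restriction is a module isomorphism between the two codes. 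In particular they share the same $\mathbb{F}_q$-dimension, which by \Cref{thm:noncentral-code} equals $o_{p_1^{j_1}}(q)\,p_2^{\lambda+\lambda_0}$.

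For the distance inequality, I would pick a nonzero minimum-weight codeword $y\in\mathbb{F}_qGe^{\beta u}$ and write $y=cu$ with $c:=yu^{-1}\in\mathbb{F}_qGe^\beta$ (nonzero, since $u$ is invertible). \Cref{thm:noncentral-code} gives $\mathrm{wt}(c)\geq d(\mathbb{F}_qGe^\beta)$, so the target inequality $d(\mathbb{F}_qGe^\beta)\leq d(\mathbb{F}_qGe^{\beta u})$ reduces to the key weight bound
\[
\mathrm{wt}(cu)\;\geq\;\mathrm{wt}(c)\qquad\text{for every } c\in\mathbb{F}_qGe^\beta.
\]
This is the main obstacle, because right multiplication by a generic unit can decrease weight through cancellation.

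To overcome it, the plan is to exploit the explicit shape of the admissible units. Every unit listed above has the form $\mathbf{1}+v$ (with $\mathbf{1}$ being $1$ or $e$ depending on whether one sits in $\mathbb{F}_qG$ or in $\mathbb{F}_qGe$), where $v$ carries an annihilating factor — $(1-h)$ for bicyclic units, $(1-\widehat{\langle b\rangle})$ for the new unit $e+s\,\widehat{\langle b\rangle}a^k(1-\widehat{\langle b\rangle})e$ (and analogous structural features for Bass and alternating units). The plan is to verify case by case that for $c\in\mathbb{F}_qGe^\beta=\mathbb{F}_qGe\widehat{\langle b^\beta\rangle}$ these annihilating factors force the supports of $c=c\mathbf{1}$ and of $cv$ to be disjoint, so that $cu=c+cv$ satisfies $\mathrm{wt}(cu)=\mathrm{wt}(c)+\mathrm{wt}(cv)\geq\mathrm{wt}(c)$. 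Establishing this support-disjointness rigorously for each family — in particular handling the interaction between the idempotent $\widehat{\langle b^\beta\rangle}$ sitting in $c$ and the factor carried by $v$ — is the technical heart of the proof, after which the distance inequality follows immediately.
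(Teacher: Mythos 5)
Your treatment of part (1) is fine and is essentially the paper's argument: conjugation by (equivalently, right multiplication by) the unit is an isomorphism onto $\mathbb{F}_qGe^{\beta u}$, and the dimension then comes from \Cref{thm:noncentral-code}.

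Part (2) has a genuine gap. Your entire plan rests on the claim that, writing $u=\mathbf{1}+v$, the supports of $c$ and of $cv$ are disjoint for every $c\in\mathbb{F}_qGe^{\beta}$, so that $\operatorname{wt}(cu)=\operatorname{wt}(c)+\operatorname{wt}(cv)\ge\operatorname{wt}(c)$; you defer this as ``the technical heart'' and never prove it, and in fact it is false. Take the paper's own example $\mathbb{F}_3D_{14}$ with $e=e_{7,1}=1-\widehat{\langle a\rangle}$, $\beta=1$, $u=e+s\,\widehat{\langle b\rangle}a(1-\widehat{\langle b\rangle})e$, and the codeword $c=e^{\beta}=(1-\widehat{\langle a\rangle})\widehat{\langle b\rangle}$. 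Over $\mathbb{F}_3$ one has $c=\sum_{g\notin\{1,b\}}g$ (weight $12$), while $cv=s\,ca(1-\widehat{\langle b\rangle})=\tfrac{s}{2}(ca-cab)=\tfrac{s}{2}\,(-a+a^{6}+ab-a^{6}b)\neq 0$, whose support lies entirely inside $\operatorname{supp}(c)$ — so disjointness fails outright (a counting argument already rules it out for any high-weight $c$, since $\operatorname{wt}(c)+\operatorname{wt}(cv)$ can exceed $|G|$). Worse, in this example $\operatorname{wt}(cu)=10<12=\operatorname{wt}(c)$ for every admissible $s\in\mathbb{F}_3\setminus\{0\}$, so even the weaker monotonicity $\operatorname{wt}(cu)\ge\operatorname{wt}(c)$ that your reduction requires (you cannot control which $c$ yields the minimum-weight word of the new code) is false. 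Note this does not contradict the corollary, which only compares minimum distances, but it does break your route to it.

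The paper's argument is different in mechanism: for the Bass and alternating units it uses the commutation relation $u_{k,m}(a)\,e\widehat{\langle b^{\beta}\rangle}=e\widehat{\langle b^{\beta}\rangle}\,u_{k,m}(a^{r})$, and for $u=e+s\,\widehat{\langle b^{\beta}\rangle}a^{k}(1-\widehat{\langle b^{\beta}\rangle})e$ a direct computation, to exhibit the conjugated idempotent $e^{\beta u}$ with $e^{\beta}$ as a factor (e.g.\ $e^{\beta u}=e^{\beta}\,w$ for an explicit $w$), and deduces $d(\mathbb{F}_qGe^{\beta})\le d(\mathbb{F}_qGe^{\beta u})$ from that factorization rather than from any pointwise weight comparison. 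To repair your proof you would need to replace the support-disjointness step by such a structural argument (or some other correct comparison of the two codes); as written, the key step is both unproven and false.
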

%\begin{corollary} Let	$\mathbb{F}_q$ field containing $q$ elements, $G$-split metacyclic group of order $p_1^mp_2^l$,  with presentation
%	$	G = \langle a, b \mid a^{p_1^m} = e, b^{p_2^l} = e, \, a^b = a^{r} \rangle
%	$, where $o_{p_1^m}(q)=p_2^l.$ 
%	\label{Cor_distance_non-central codes}In the foregoing notation, 
%	\begin{enumerate}
%		\item $\dim(\mathbb{F}_qG e_{p_1^{j_1}, k_1}^{\beta u}=o_{p_1^{j_1}}(q) \cdot p_2^{\lambda + \lambda_0}
%		$
%		\item $d:=d(\mathbb{F}_qG e_{p_1^{j_1}, k_1}^{\beta u}$,  
%		$d(\mathbb{F}_qGe_{p_1^{j_1}, k_1}^{\beta }) \leq d(\mathbb{F}_qGe_{p_1^{j_1}, k_1}^{\beta u}).$
%		
%	\end{enumerate}   	
%\end{corollary}
\begin{proof}The dimension follows from \Cref{thm:noncentral-code}. Since conjugation is an automorphism the dimensions of code generated by the non-central idempotents constructed by units are the same as code generated by idempotents  $ e^{\beta}$. 
	Also if we consider $u=e+ s\widehat{\langle b^\beta \rangle}a^k(1-\widehat{\langle b^\beta \rangle})e$, then clearly
	$u^{-1}eu=(1-a(1-\widehat{ \langle b^\beta \rangle}))e\widehat{\langle b^\beta \rangle}.$
	Now if we consider $u=u_{k, m}(a)$ or $u=u_{k}(a)$ then  $u_{k, m}(a)e\widehat{\langle b^\beta \rangle}=e\widehat{\langle b^\beta \rangle}u_{k, m}(a^r),\; r \in \mathbb{Z}$, which implies idempotent $ e^{\beta u}$  contains $ e^{\beta}$ as a factor.  So, $d(\mathbb{F}_qGe^{\beta}) \leq d( e^{\beta u})$.
\end{proof}
Now we will give some examples where code generated by above methods  indeed improve the distance parameter.
\begin{example}\label{example_dihedral_central}\underline{$\mathbb{F}_3D_{14}$ and $\mathbb{F}_5D_{14}$, $D_{14}= \langle a, b ~|~ a^{7}=b^2=1, a^b=a^{-1} \rangle.$}\\
		We see that the pcis of $\mathbb{F}_3D_{14}$ are $e_1:=\widehat{D_{14}}$, $e_2:=\widehat{\langle a \rangle}-\widehat{D_{14}}$ and $e_{7,1}:=1-\widehat{\langle a \rangle}$. 
	
	If we consider  $\mathbb{F}_3D_{14}$, then the code generated by the idempotent $\widehat{\langle b \rangle}e_{7,1}$ is a $[10,6,4]$. However, by considering the idempotent $e_{7,1}(\widehat{\langle b \rangle}+\widehat{\langle b \rangle}a(1-\widehat{\langle b \rangle}))$, we get a $[14, 6, 6]$ code, which is best known and also  not equivalent to abelian code.\\
	Now if we consider  $\mathbb{F}_5D_{14}$, then code generated by  the idempotent $e_{7,1}(\widehat{\langle b \rangle}+\widehat{\langle b \rangle}a(1-\widehat{\langle b \rangle}))$, we get a $[14, 6, 7]$ code, which is best known and also  not equivalent to abelian code.\\ 
\end{example}
\begin{example}\label{example_order39}\underline{$\mathbb{F}_2G$ and $\mathbb{F}_5G$, $G$-the non-abelian metacyclic group of order $39$}, \\
	
	$G:= \langle a, b ~|~ a^{13}=b^3=1, a^b=a^{9} \rangle$.\\
	
	The code generated by the pci of $\mathbb{F}_2G$ corresponding to the strong Shoda pair $(\langle a \rangle,  \langle 1\rangle) $, namely $e_{13,1}=e_C(G, \langle a \rangle,  \langle 1\rangle)$ yields a $[39, 36, 2]$, which is a best known code.\\
	Now the non-central idempotent, namely $e_{13,1}\widehat{\langle b \rangle}$, gives a $[39, 12, 6]$ code.
	However, by following the technique given above and considering the idempotent $e_{13,1}(\widehat{\langle b \rangle}+\widehat{\langle b \rangle}a(1-\widehat{\langle b \rangle}))$, we get a $[39, 12, 10]$ code, which is not equivalent to abelian code. Now if we consider $u=1+a+a^2$ as proved above, $ue_{13, 1} $ is a unit in $\mathbb{F}_2Ge_{13, 1}$. So $e_{13, 1}^{1u}$  gives a code with parameters $[39, 12, 12]$.  \\
Similarly, if we consider the code generated by the non-central idempotent 
$e_{13,1}\widehat{\langle b \rangle}$ of $\mathbb{F}_5G$, we obtain a $[39,12,6]$-code. 
However, when we take $e_{13,1}(\widehat{\langle b \rangle} + \widehat{\langle b \rangle}a(1 - \widehat{\langle b \rangle}))$, 
the resulting code has parameters $[39,12,17]$, which is very close to the 
best-known $[39,12,18]$-code. Thus, the distance parameter increases 
significantly.

\end{example}

\begin{example}\label{example_order57}
	\underline{$\mathbb{F}_2G$, where $G$ is the non-abelian metacyclic group of order $57$},\\
	\[
	G := \langle a, b \mid a^{19} = b^3 = 1,~ a^b = a^{7} \rangle.
	\]
	The non-central idempotent $e_{19,1}\widehat{\langle b \rangle}$ gives a $[57, 18, 6]$ code. However, by following the technique described above and considering the idempotent
$
	e_{19,1}(\widehat{\langle b \rangle} + \widehat{\langle b \rangle}a(1 - \widehat{\langle b \rangle})),
$
	we obtain a $[57, 18, 14]$ code, which is not equivalent to any abelian code.
	
	Now, if we consider the unit $u = 1 + a + a^2$, then $u e_{19,1}$ is a unit in $\mathbb{F}_2 G e_{19,1}$. So the conjugated idempotent $e_{19,1}^{1u}$ gives a code with parameters $[57, 18, 16]$. Again, this code is inequivalent to any abelian code and is close to the best-known code with parameters $[57, 18, 17]$. It is apparent that the distance parameter increases. 
\end{example}
\begin{example}\label{example_order20}
	\underline{$\mathbb{F}_3G$, where $G$ is the non-abelian metacyclic group of order $20$},\\
	\[
	G := \langle a, b \mid a^{5} = b^4 = 1,~ a^b = a^{2} \rangle.
	\]
	The non-central idempotent $e_{5,1}\widehat{\langle b \rangle}$ generates a code with parameters $[20,4,8]$.  
	Now consider the unit $u = 1 + a $. Since $u e_{5,1}$ is a unit in $\mathbb{F}_3 G e_{5,1}$, the conjugated idempotent $e_{5,1}^{1u}$ yields a code with parameters $[20,4,12]$. 
	This code coincides with a best-known code. Clearly, adjoining the unit improves the minimum distance from $8$ to $12$ while keeping the length and dimension fixed.
\end{example}

			\bibliographystyle{amsalpha}
			\bibliography{Codes}

		\end{document}